\documentclass[]{scrartcl}
\usepackage[a4paper,top=27mm,bottom=20mm,inner=25mm,outer=20mm]{geometry}

\usepackage[utf8]{luainputenc}
\usepackage[USenglish]{babel}
\usepackage{csquotes}

\usepackage[%
  backend=bibtex,bibencoding=ascii,
  style=numeric-comp,
  giveninits=true, uniquename=init, %abbreviate first names
  natbib=true,
  url=false,
  doi=true,
  isbn=false,
  backref=false,
  maxnames=99,
  ]{biblatex}
\usepackage{amsmath}
\usepackage{amssymb}
\usepackage{mathtools}
\usepackage{pgfplots}
\pgfplotsset{compat=1.18}
\usepackage{amsthm}
\usepackage{placeins}
\newtheorem{theorem}{Theorem}
\newtheorem{lemma}[theorem]{Lemma}

\newtheorem{remark}[theorem]{Remark}
\newtheorem{proposition}[theorem]{Proposition}

\usepackage[plainpages=false,pdfpagelabels,hidelinks,unicode]{hyperref}

\usepackage{ifluatex}
\ifluatex
  \usepackage[no-math]{fontspec}
\else
  \usepackage[T1]{fontenc}
\fi
\usepackage{newpxtext,newpxmath}

\usepackage{color}
\usepackage{graphicx}

\definecolor{col1}{RGB}{         0  135.4688  255.0000}
\definecolor{col2}{RGB}{         0    9.9609  255.0000}
\definecolor{col3}{RGB}{  115.5469         0  255.0000}
\definecolor{col4}{RGB}{  241.0547         0  255.0000}
\definecolor{col5}{RGB}{  255.0000         0  143.4375}
\definecolor{col6}{RGB}{  255.0000         0   17.9297}

\pgfplotscreateplotcyclelist{hierarchy6}{%
	{col1,thick,mark=o},
	{col2,thick,mark=pentagon},
	{col3,thick,mark=square},
	{col4,thick,mark=triangle},
	{col5,thick,mark=x},
   {col6,thick,mark=otimes}}

\usepackage{setspace}
\usepgfplotslibrary{external}
\newcommand{\E}{\mathcal{E}}    % Cahn-Hilliard Energy
\newcommand{\Eh}{\mathcal{E}_H} % Cahn-Hilliard-Hyperbolic Energy
\newcommand{\Ehd}{\mathcal{E}_{H,d}} % Cahn-Hilliard-Hyperbolic Energy

\renewcommand{\O}{\mathcal{O}}

\newcommand{\FF}{\mathcal{F}_{\eps}}
\renewcommand{\S}{\mathcal{S}_{\eps}}
\newcommand{\ww}{\mathbf{w}}
\newcommand{\qq}{\mathbf{q}}
\newcommand{\uu}{\vec{u}}
\newcommand{\pp}{\mathbf{p}}
\newcommand{\xx}{\mathbf{x}}
\newcommand{\cc}{\varsigma}
\newcommand{\Nx}{{N}_x}
\newcommand{\ndof}{\mathtt{ndof}}

\newcommand{\R}{\mathbb{R}}
\newcommand{\N}{\mathbb{N}}
\newcommand{\Te}{T_{\text{end}}}
\newcommand{\eps}{\varepsilon}
\newcommand{\dx}{\Delta x}
\newcommand{\dt}{\Delta t}

\DeclareMathOperator{\sign}{sign}
\DeclareMathOperator{\rand}{rand}
\DeclareMathOperator{\Id}{Id}

\newcommand{\orcid}[1]{ORCID:~\href{https://orcid.org/#1}{#1}}

\title{
    Justification and structure- and asymptotic-preserving discretizations of a hyperbolized Cahn-Hilliard equation
}

\author{
    Jan~Giesselmann\thanks{\orcid{0009-0008-0217-7244}, Department of Mathematics, Technical University of Darmstadt, Dolivostr 15, 64293, Germany},
    Fabio~Leotta\thanks{\orcid{0000-0001-6131-0107}, Department of Mathematics, Technical University of Darmstadt, Dolivostr 15, 64293, Germany},
    Hendrik~Ranocha\thanks{\orcid{0000-0002-3456-2277}, Institute of Mathematics, Johannes Gutenberg University Mainz, Staudingerweg 9, 55128 Mainz, Germany}, and
    Jochen Sch\"utz\thanks{\orcid{0000-0002-6355-9130}, Faculty of Sciences \& Data Science Institute, Hasselt University, Agoralaan Gebouw D, BE-3590 Diepenbeek, Belgium}
}
\date{\today} % TODO

\begin{document}

\maketitle

\begin{abstract}
  We study a hyperbolic approximation ("hyperbolization") of the Cahn-Hilliard (CH) equation, originally proposed by Dhaouadi, Dumbser, and Gavrilyuk (2025, DOI: \href{https://doi.org/10.1098/rspa.2024.0606}{10.1098/rspa.2024.0606}) and study its convergence towards the CH model in a relaxation limit both via formal asymptotic expansions and, for a slightly modified approximation, via the relative energy framework. Moreover, we develop energy-stable semidiscretizations of the CH equation and of this hyperbolization using upwind summation-by-parts operators in space. Subsequently, we combine them with (additive) implicit-explicit (IMEX) Runge-Kutta methods based on a convex-concave splitting. We show that the resulting method is asymptotic preserving, i.e., it converges in the limit of the relaxation parameter to a stable discretization of the original CH equation.
The choice of the necessary parameters is guided by the a~priori error estimate based on the relative energy framework.

\end{abstract}

\section{Introduction}
We propose and analyze an alternative discretization for the Cahn-Hilliard equation \cite{1958_Cahn_article} through hyperbolization.
The Cahn-Hilliard equation describes two-phase flow, where the phases are not separated by a sharp interface but a diffuse (``smeared-out'') one.
Given some spatial domain $\Omega \subset \R^d$, the Cahn-Hilliard equation \cite{1958_Cahn_article} with constant mobility is given by
\begin{align}
    \label{eq:CH}
    \partial_t c = \Delta (g'(c)-\gamma \Delta c) \qquad
    \text{in } \Omega \times \R^{+},
    %\forall (x, t) \in \Omega \times \R^{+},
\end{align}
where $\gamma > 0$ is a small parameter that determines the interface thickness, and the function $g: \R \rightarrow \R$ is the so-called double-well potential. Here, we use the well-established choice
\begin{align}
 \label{eq:doublewell}
 g(c) := \frac 1 4 \left(c^2-1\right)^2;
\end{align}
other choices are possible. For a thorough recent review on the Cahn-Hilliard equation, we refer to \cite{2022_Review_CahnHilliard}.
The Cahn-Hilliard equation comes with an associated energy $\E$ given by
\begin{align}
 \label{eq:E}
 \E(c) = \int_{\Omega} \left( g(c) + \frac{\gamma}{2} |\nabla c|^2 \right) \mathrm{d} x.
\end{align}
For smooth solutions $c$ to \eqref{eq:CH} with appropriate boundary conditions\footnote{Throughout this work, we use periodic boundary conditions for the ease of presentation. For homogeneous Neumann boundary conditions on both $c$ and the chemical potential $\mu:=g'(c) - \gamma \Delta c$, the result is equally true.}, we can use integration by parts to show the well-known energy dissipation property
\begin{align*}
  \frac{\mathrm d}{\mathrm dt} \E(c) =
  %\int_{\Omega} \left(g'(c) - \gamma \Delta c\right) \Delta \left(g'(c)-\gamma \Delta c\right) \mathrm{d}x \leq 0.
  - \int_{\Omega} \left| \nabla \left(g'(c) - \gamma \Delta c\right)\right|^2 \mathrm{d}x \leq 0.
\end{align*}
Hence, the energy must not increase, which is, together with mass conservation of $c$, of utmost physical importance. Numerical schemes should be designed in such a way that they respect these physical properties.

In this work, we propose a way to solve \eqref{eq:CH} through generalizing an existing hyperbolization of Dhaouadi, Dumbser, and Gavrilyuk \cite{dhaouadi2025first}.
Hyperbolization means that instead of solving \eqref{eq:CH}, a first-order \emph{hyperbolic} system of equations
\begin{align}
 \label{eq:CHH}
 \ww_t + \nabla \cdot \FF(\ww) &= \S(\ww) \qquad \text{in }\Omega \times \R^{+}
\end{align}
is being solved. Here, both $\FF$ and $\S$ depend on a ``small'' parameter $\eps > 0$, which makes $\ww$ also depend on $\eps$ (suppressed for the ease of notation). The system in \eqref{eq:CHH} is designed in such a way that for $\eps \rightarrow 0$, the first component of $\ww$ formally converges to a solution $c$ of \eqref{eq:CH}.
In \cite{dhaouadi2025first}, the hyperbolization is constructed such that  there is an energy $\Eh$ associated to \eqref{eq:CHH}, 
details will be presented in the next section.
By energy we refer to a functional $\Eh(\ww)$ that decays over time
and formally, for $\eps \rightarrow 0$, %$\Eh(\ww)$
converges to $\E(c)$.
In this work, we present spatial discretizations based on (upwind) summation-by-parts (SBP) operators \cite{Mattsson2017} for both \eqref{eq:CH} and \eqref{eq:CHH} that preserve the decay of the respective energies.
Subsequently, we show that these discretizations, in combination with a suitable IMEX (implicit/explicit) time integration, will for $\eps \rightarrow 0$ converge to each other, i.e., the algorithm for \eqref{eq:CHH} formally converges to the algorithm for \eqref{eq:CH} and the underlying scheme is hence asymptotic preserving \cite{Jin2012}.

The novelties of this work are:
\begin{itemize}
 \item We adapt the hyperbolization of \cite{dhaouadi2025first} by introducing a small parameter $\eps$ and analyze its convergence properties with respect to $\eps$.
 \item We provide an a~priori error estimate, quantifying the convergence of a solution of a specific instance of \eqref{eq:CHH} to a solution of \eqref{eq:CH} as long as the latter admits a strong solution, based on a relative energy framework for a slightly modified variant of this hyperbolization.
 \item We present energy-stable spatial algorithms for \eqref{eq:CH} and \eqref{eq:CHH}, and show that they are close to each other for small $\eps$, i.e., we show that the discretization of \eqref{eq:CHH} is asymptotic preserving.
\end{itemize}

\paragraph{Related literature}

Since the seminal work of Eyre \cite{1998Eyre}, it is known that implicit/explicit (IMEX) time integration, see, e.g., \cite{Ascher1995,Ascher1997,Bo07,christopher2001additive,RuBosc09,2025BoscarinoPareschiRusso},
should be used to obtain energy-stable discretizations of the Cahn-Hilliard equation that do not have to rely on a severe time step restriction. (Very recently, however, there has been some criticism of this splitting, at least in the context of the Allen-Cahn equation \cite{2026Dondl}.)
Building upon Eyre's splitting, vast literature is available, see, e.g., \cite{2010ShenYang,2015HanWang,christlieb2013unconditionally,2016GuoFilbertXu,2024KirkRiviereMasri,2026IslamSinha} and the references therein.
All relevant spatial discretization schemes have been applied to Eq.~\eqref{eq:CH}, in particular, the continuous Galerkin method \cite{1989Elliott,2016DiegelWangWise}, the finite volume method \cite{2008CuetoPeraire}, and (Fourier) spectral methods \cite{2007HeLiuTang}, to mention only a few highly cited contributions.
The classical local discontinuous Galerkin (DG) method for Cahn-Hilliard has been introduced in \cite{xia2007local}, see also \cite{2016GuoFilbertXu,2020FrankRuppKuzmin,manzanero2020free,2024KirkRiviereMasri,2026IslamSinha} and the references therein for extensions and analysis.
In particular, \cite{2024KirkRiviereMasri} gives a hybridized DG scheme that is provably unconditionally well-posed and energy stable.

We construct provably energy-stable spatial discretizations using the general framework of SBP operators.
Classical references and an introduction to the general concept can be found in the review articles \cite{svard2014review,fernandez2014review}.
While we focus on finite differences (FDs) \cite{kreiss1974finite,strand1994summation,carpenter1994time} and discontinuous Galerkin~(DG) methods \cite{gassner2013skew,carpenter2014entropy} in the numerical experiments, the results extend directly to all other classes of schemes that fit within the SBP framework, including
finite volumes \cite{nordstrom2001finite},
continuous finite elements \cite{hicken2016multidimensional,hicken2020entropy,abgrall2020analysisI},
flux reconstruction \cite{ranocha2016summation},
active flux methods \cite{barsukow2025stability},
meshless schemes~\cite{hicken2025constructing,kwan2025robust},
and cut-cell methods~\cite{taylor2025entropy,petri2026kinetic}.
In the context of DG methods for parabolic problems, the classical notion of SBP operators corresponds to the Bassi-Rebay~1 (BR1) method \cite{bassi1997high}, which has been shown to be provably energy-stable for the Cahn-Hilliard equation in \cite{manzanero2020free}.
However, it is well-known that local DG (LDG) methods can be advantageous, e.g., in terms of stability properties for elliptic problems \cite{ABCM}.
They can be included within the SBP framework \cite{RanochaConservative2021,ortleb2025stability} using the generalized notion of upwind SBP operators \cite{Mattsson2017}, a special case of dual-pair derivative operators \cite{dovgilovich2015high}.
Upwind SBP operators can be interpreted as central SBP operators plus artificial dissipation \cite{svard2005steady,mattsson2007high}, removing nullspace consistency issues related to instability of the operator for elliptic problems.

Hyperbolizations have already been introduced several decades ago to avoid infinite speed of propagation in physical models, e.g., for the heat equation \cite{cattaneo1958forme,vernotte1958paradoxes}.
The interest in hyperbolizations has been renewed in recent years, see, e.g., \cite{antuono2009dispersive,favrie2017rapid,Ketcheson_2025} and the references therein.
Hyperbolizations can lead to a better understanding of the sometimes very difficult underlying equations, for example with respect to boundary conditions \cite{besse2022perfectly}.
Eventually, hyperbolization may also facilitate GPU implementations \cite{escalante2019efficient,wittenstein2026gpu} and lead to more versatile and more efficient numerical schemes \cite{busto2021high,toro2014advection,ranocha2025structure}, as numerical methods for hyperbolic equations have become very mature even in difficult situations such as those that occur, e.g., for massive mesh refinement or for solutions with discontinuities or very strong gradients \cite{guermond2022well,guermond2022hyperbolic}.
Moreover, the auxiliary variables introduced in hyperbolizations to approximate spatial derivatives will typically converge with the same order as the primary variable, i.e., with a higher order than most traditional discretizations of steady-state problems would yield \cite{mazaheri2016first,chamarthi2019first,schlottkelakemper2021purely}.
While one can argue that this comes at the cost of more unknowns, the additional variables can sometimes be eliminated by a careful choice of discretizations \cite{ranocha2023discontinuous}.
While the resulting discretizations can also be motivated and analyzed using traditional techniques \cite{cockburn2009superconvergent}, hyperbolizations provide an intuitive and powerful way for designing and analyzing numerical schemes.
Concerning the Cahn-Hilliard equation, we focus on the hyperbolization proposed by Dhaouadi, Dumbser, and Gavrilyuk \cite{dhaouadi2025first}.
Keim, Konan, and Rohde \cite{keim2024note} proposed another hyperbolic-elliptic approximation of the Cahn-Hilliard equation, where a first-order hyperbolic system is coupled with an elliptic equation.
Moreover, \cite{yu2026first} studied a hyperbolization of a Cahn-Hilliard system and first-order energy-stable methods.

Solving \eqref{eq:CHH} instead of \eqref{eq:CH} means that the solution to the latter equation is approximated by $\eps$-dependent solutions of the former equation.
This process can only be reasonable if, as $\eps \rightarrow 0$, the solutions converge to each other, not only on the continuous level, but also the discretized version of \eqref{eq:CHH} should converge to a discretized version of \eqref{eq:CH}.
In a broader context, such a property is called asymptotic preserving (AP) \cite{Jin99,Jin2012}. The literature on AP schemes is vast, ranging from relaxation problems, see, e.g., \cite{filbet2010class,DiPar2013,2014DimarcoPareschi,2024BoscarinoRusso} and the references therein, to more general operator-split equations, see, e.g., \cite{Kl95,dimarco2017study,BispenIMEXSWE,BoRuSca18,DeTa,HaJiLi12,Oberwolfach19,RSIMEXFullEuler,2025AnandanEtAl}.
We refer to \cite{2022Shi} for a recent review on AP schemes for multiscale physical problems.
In the context of hyperbolization, the AP concept has been applied to the Korteweg-de-Vries and Benjamin-Bona-Mahony equations \cite{bleecke2025asymptoticpreservingenergyconservingmethodshyperbolic,BiswasEtAl2025}.

While hyperbolizations have been proposed and used in many works, there are only very few rigorous results on the convergence of the hyperbolized system to the original system \cite{duchene2019rigorous}.
We use the relative entropy (also known as relative energy) method for this purpose, which has a long history for both hyperbolic and parabolic problems.
It leverages an energy or entropy structure of the considered system in order to control the distance between two solutions.
In this paper, we will use the term relative energy consistently (although we could call it relative entropy equally well).
An overview on the use of relative energy in parabolic models can be found in \cite{Juengel2016}.
For hyperbolic models, it originates in works of Dafermos \cite{Dafermos1979} and DiPerna \cite{Diperna1979}, where it was, in particular, used to establish weak-strong uniqueness.
In addition, it is frequently used to study the relationship between solutions of different models, where usually one can be understood as the relaxation limit of the other, e.g., in large friction limits of hyperbolic systems, see \cite{Tzavaras2005,Lattanzio2017,GiesselmannLattanzioTzavaras2017,Egger2023,Gallenmueller2024}.
For an extension to hyperbolic-parabolic systems, see \cite{Christoforou2018}.
A general property in this analysis is that the solution to the limiting system needs to be a strong solution whereas the solutions to the approximating system can be allowed to be (weak) entropy solutions.
Recently, it has been used to investigate the convergence of hyperbolic approximations of higher-order PDEs \cite{giesselmann2025convergence} also in the context of Cahn-Hilliard equations \cite{giesselmann2026justificationrelaxationapproximationnavierstokescahnhilliard}, where %a conceptually very different hyperbolization
a hyperbolic-elliptic approximation of the Cahn-Hilliard equation, coupled to incompressible Navier-Stokes equations, suggested in \cite{keim2024note} was studied.

\paragraph{Structure of this article}

This work is structured as follows:
In Sec.~\ref{sec:hyperbolization}, we review the hyperbolization proposed by \cite{dhaouadi2025first} and analyze it with respect to convergence properties as the hyperbolization parameter $\eps \rightarrow 0$.
Subsequently, in Sec.~\ref{sec:relativeenergyanalysis}, we provide an a~priori error analysis framework for a slightly modified hyperbolized relaxation equation.
In Sec.~\ref{sec:spatial}, we use SBP operators as spatial discretization tools to devise provably energy-stable semi-discrete schemes for both the Cahn-Hilliard equation and its relaxation.
These schemes are then discretized in time in Sec.~\ref{sec:time}, and we show that as $\eps \rightarrow 0$, the discretization of the hyperbolic relaxation system converges to a discretization of \eqref{eq:CH}.
Sec.~\ref{sec:numres} presents numerical results; Sec.~\ref{sec:conout} offers conclusions and an outlook.

\section{Asymptotic analysis of the hyperbolization}
\label{sec:hyperbolization}
Dhaouadi, Dumbser, and Gavrilyuk \cite{dhaouadi2025first} introduced a hyperbolization for \eqref{eq:CH} using the unknowns
\begin{align}
 \label{eq:ww}
  \ww = (\cc, \qq, w, \pp, \varphi)
\end{align}
through
\begin{subequations}\label{eq:CHHExplicit}
\begin{alignat}{2}
  \label{eq:CHHE1}    \cc_t     &+ \nabla \cdot \left(\frac \qq {\kappa_3} \right)              &\ = \ & 0, \\
  \label{eq:CHHE2}  \qq_t   &+ \nabla \left(g'(\cc) + \frac{\cc-\varphi}{\kappa_1}\right)           &\ = \ & -\frac \qq {\kappa_3}, \\
  \label{eq:CHHE3}  w_t     &- \nabla \cdot (\gamma \pp)                                &\ = \ & \frac{\cc - \varphi}{\kappa_1}, \\
  \label{eq:CHHE4}  \pp_t   &-\frac {\nabla w} {\kappa_2} &\ = \ & 0, \\
  \label{eq:CHHE5}  \varphi_t &                                                         &\ = \ & \frac w {\kappa_2},
\end{alignat}
\end{subequations}
where $\kappa_i$, $1 \leq i \leq 3$, are fixed parameters\footnote{Please note that in the work \cite{dhaouadi2025first}, the constants have different names; they use the symbols $\alpha$, $\beta$ and $\tau$. In their notation, $\kappa_1 = \frac 1 \alpha$, $\kappa_2 = \beta$ and $\kappa_3 = \tau$. We renamed the parameters to avoid overly heavy notation and only use parameters that asymptotically go to zero.} which should vanish to obtain the original equation \eqref{eq:CH}.
More precisely, in \cite[Prop.~1]{dhaouadi2025first}, the parameters are chosen as $\kappa_1 = \gamma$, $\kappa_2 = \gamma^3$ and $\kappa_3 = \gamma^2$, and the authors show that the solution $\cc$ of \eqref{eq:CHHExplicit} is close to a solution of \eqref{eq:CH} up to order $\gamma^2$ in a certain sense.
This obviously assumes that $\gamma$ is small, which is reasonable in many applications where the Cahn-Hilliard equation models a diffuse interface that, for $\gamma \rightarrow 0$, turns into a sharp interface \cite{1989Pego}.
In this work, in contrast, we consider $\gamma$ to be a fixed parameter different from zero, and seek hyperbolizations depending on a parameter $\eps$ that converge to \eqref{eq:CH} as $\eps$ goes to zero.

\begin{theorem}\label{thm:aphyp}
 Define the hyperbolization parameters in \eqref{eq:CHHExplicit} as $\kappa_1 = \eps$, $\kappa_2 = \gamma \eps^{k_2}$ and $\kappa_3 = \eps^{k_3}$ for positive values of $k_2, k_3 \in \N$.
 Let $\ww$ from \eqref{eq:ww} be a smooth solution that possesses a Hilbert expansion, i.e., the function $\ww$ can be written as
 \begin{align*}
  \ww(\xx,t) = \ww_0(\xx,t) + \eps \ww_1(\xx,t) + \eps^2 \ww_2(\xx,t) + \cdots.
 \end{align*}
 We assume that the initial condition on $\ww$ is well-prepared, which for our purposes means that
 \begin{align*}
  \cc(\xx, t = 0) = c(\xx, t = 0) + \O(\eps), \qquad \pp(\xx, t = 0) = \nabla \varphi(\xx, t = 0) + \O(\eps).
 \end{align*}
 Then, $\cc_0$, the first component of $\ww_0$, satisfies \eqref{eq:CH}; hence, it is a solution of the Cahn-Hilliard equation.
\end{theorem}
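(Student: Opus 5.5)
Since $\ww$ possesses a Hilbert expansion, I would insert it into \eqref{eq:CHHExplicit} with $\kappa_1=\eps$, $\kappa_2=\gamma\eps^{k_2}$, $\kappa_3=\eps^{k_3}$, multiply each equation by the appropriate power of $\eps$ so that all right-hand sides are polynomial in $\eps$, and compare coefficients order by order. The relevant balances come from the stiff source terms. First, in \eqref{eq:CHHE3} multiplied by $\eps$, the leading order is $\cc_0-\varphi_0=0$. Define $\mu := (\cc-\varphi)/\eps$, whose expansion starts with $\mu_0 := \cc_1-\varphi_1$. Then \eqref{eq:CHHE3} reads $w_t-\gamma\nabla\cdot\pp=\mu$, and at order $\eps^0$ this gives $\partial_t w_0-\gamma\nabla\cdot\pp_0=\mu_0$.

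Next I would treat \eqref{eq:CHHE4} and \eqref{eq:CHHE5} multiplied by $\gamma\eps^{k_2}$. At order $\eps^0$ they give $\nabla w_0=0$ and $w_0=0$, and more generally $w_j=0$ and $\nabla w_j=0$ for $j<k_2$. Hence $\partial_t\pp_0=\gamma^{-1}\nabla w_{k_2}$ and $\partial_t\varphi_0=\gamma^{-1} w_{k_2}$, so $\partial_t(\pp_0-\nabla\varphi_0)=0$. The well-prepared initial data then give $\pp_0=\nabla\varphi_0=\nabla\cc_0$ for all times. Substituting $w_0=0$ into the $O(1)$ balance of \eqref{eq:CHHE3} yields $\mu_0=-\gamma\Delta\cc_0$.

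Similarly, \eqref{eq:CHHE2} multiplied by $\eps^{k_3}$ gives $\qq_j=0$ for $j<k_3$. At order $\eps^{k_3}$ (with the convention $\qq_j=0$ for $j<0$) it gives
\begin{align*}
\partial_t\qq_{0}+\nabla\bigl(g'(\cc)+\mu\bigr)_{0}=-\qq_{k_3},
\end{align*}
and since $\qq_0=0$ this yields $\qq_{k_3}=-\nabla(g'(\cc_0)+\mu_0)$. Finally, \eqref{eq:CHHE1} at order $\eps^0$ reads $\partial_t\cc_0+\nabla\cdot\qq_{k_3}=0$, because $\qq/\eps^{k_3}=\qq_{k_3}+O(\eps)$. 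Combining the pieces gives $\partial_t\cc_0=\Delta(g'(\cc_0)-\gamma\Delta\cc_0)$, which is \eqref{eq:CH}. The initial condition $\cc_0(\cdot,0)=c(\cdot,0)$ follows from the well-preparedness assumption.

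The main obstacle is the bookkeeping across orders when $k_2,k_3>1$, since different equations are balanced at different powers of $\eps$. In particular, I need the orders $\eps^1,\dots,\eps^{k_2-1}$ of \eqref{eq:CHHE4} and \eqref{eq:CHHE5} to produce no inconsistent constraints, and only $w_{k_2}$ enters the leading dynamics of $\pp_0$ and $\varphi_0$. This is where the initial condition $\pp(\cdot,0)=\nabla\varphi(\cdot,0)+\O(\eps)$ is needed: it closes $\pp_0=\nabla\cc_0$, because the leading equations alone only show that $\pp_0-\nabla\varphi_0$ is constant in time. A second subtle point is that $\mu$ must be defined via the next-order terms $\cc_1-\varphi_1$ of the expansion. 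I would state the convention $\ww_j=0$ for $j<0$ explicitly and track each equation at exactly the order where its stiff term first appears.
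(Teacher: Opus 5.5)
Your proposal is correct and follows essentially the same route as the paper: the same order-by-order balances of \eqref{eq:CHHE5}, \eqref{eq:CHHE4}, \eqref{eq:CHHE3}, \eqref{eq:CHHE2} and \eqref{eq:CHHE1}, with the well-prepared data used to close $\pp_0=\nabla\varphi_0$. The only difference is cosmetic. Your auxiliary quantity $\mu=(\cc-\varphi)/\eps$, expanded after establishing $\cc_0=\varphi_0$, folds into a single step what the paper does in two: first $\qq_i=0$ for $i\le k_3-2$, then $\qq_{k_3-1}=0$ via $\cc_0=\varphi_0$.
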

\begin{remark}
	The $\gamma$-scaling of $\kappa_2$ will become clear from the a~priori error analysis, see Thm.~\ref{thm:error_estimate} below.
\end{remark}
\begin{proof}
 The last equation \eqref{eq:CHHE5} implies that
 \begin{align}
  \label{eq:wkglnull}
  w_0 = \cdots = w_{k_2-1} = 0 \quad \text{and} \quad \partial_t \varphi_0 = \frac 1 \gamma w_{k_2}.
 \end{align}
 Using this in \eqref{eq:CHHE4}, we obtain
 \begin{align*}
  \partial_t \pp_0 = \frac 1 \gamma \nabla w_{k_2},
 \end{align*}
 which results in
 \begin{align*}
   \partial_t \pp_0 = \partial_t (\nabla \varphi_0).
 \end{align*}
 Together with the well-prepared initial conditions, we can hence conclude that
 \begin{align*}
  \pp_0 = \nabla \varphi_0.
 \end{align*}
 The leading-order term of Eq.~\eqref{eq:CHHE3} yields
 \begin{align*}
  \varphi_0 = \cc_0,
 \end{align*}
 and hence
 \begin{align}
    \pp_0 = \nabla \cc_0. \label{eq:ppglcc}
 \end{align}
 Eq.~\eqref{eq:CHHE2} implies that (note that if $k_3 = 1$, then this equation is void)
 \begin{align*}
  \qq_{i} = 0, \quad 0 \leq i \leq k_3 - 2.
 \end{align*}
 We then obtain to leading order that
 \begin{align}
  \label{eq:qqk3glnull}
  \nabla (\cc_0 - \varphi_0) = -\qq_{k_3-1},
 \end{align}
 which implies (with $\varphi_0 = \cc_0$) that $\qq_{k_3-1} = 0$.
 From Eq.~\eqref{eq:CHHE2}, we can then conclude that
 \begin{align*}
  \nabla \left(g'(\cc_0) + \cc_1 - \varphi_1\right) &= -\qq_{k_3}.
 \end{align*}
 Together with the next-to-leading-order term of \eqref{eq:CHHE3}, which is
 \begin{align*}
  \cc_1 - \varphi_1 = -\nabla \cdot(\gamma \pp_0) = -\nabla \cdot(\gamma \nabla \varphi_0) = -\gamma \Delta \varphi_0 = -\gamma \Delta \cc_0,
 \end{align*}
 this results in
 \begin{align}
   \label{eq:qk3}
   \nabla \left(g'(\cc_0) -\gamma \Delta \cc_0\right) &= -\qq_{k_3}.
 \end{align}
 Finally, we can plug this expression for $\qq_{k_3}$ into the leading-order terms of \eqref{eq:CHHE1} to obtain the final result
 \begin{align*}
  (\cc_0)_t - \nabla \cdot \left( \nabla \left(g'(\cc_0) -\gamma \Delta \cc_0\right) \right) = 0,
 \end{align*}
 which is obviously equivalent to \eqref{eq:CH}. Together with the initial conditions for $\cc$, this implies that $\cc_0 = c$, where $c$ is the solution to \eqref{eq:CH}.
\end{proof}
\begin{remark}
 The well-prepared initial conditions in Thm.~\ref{thm:aphyp} are the minimal requirements to prove the theorem. From the proof, it seems natural to also set
 \begin{align*}
  \varphi(\xx, t = 0) = c(\xx, t = 0), \quad w(\xx, t = 0) = 0, \quad \qq(\xx, t = 0) = 0.
 \end{align*}
\end{remark}

Thm.~\ref{thm:aphyp} gives us some flexibility in choosing the parameters $k_2, k_3 \in \N$ in $\kappa_2 = \gamma \eps^{k_2}$ and $\kappa_3 = \eps^{k_3}$ in \eqref{eq:CHHExplicit}.
For illustration purposes, we consider the one-dimensional case for a moment, i.e., $\Omega \subset \R$.
Then, using the parameters defined in Thm.~\ref{thm:aphyp}, the wave speeds of the hyperbolic equation \eqref{eq:CHH}, i.e., the eigenvalues of the Jacobian of $\FF$, are given by (compare \cite[Sec.~2(e)]{dhaouadi2025first})
\begin{align}
 \label{eq:wavespeeds}
 \lambda_1 = -\frac{\sqrt{g''(\cc)+1/\eps}}{\eps^{k_3/2}}, \;
 \lambda_2 = -\frac{1}{\eps^{k_2/2}}, \;
 \lambda_{3} = 0, \;
 \lambda_4 = \frac{1}{\eps^{k_2/2}}, \;
 \lambda_5 = \frac{\sqrt{g''(\cc)+1/\eps}}{\eps^{k_3/2}}.
\end{align}
This implies that the largest eigenvalue that will ultimately determine a CFL condition when using an explicit time discretization is of order $\max\{\eps^{-1/2 -k_3/2}, \eps^{-k_2/2}\}$.
Hence, it seems appropriate to minimize this quantity and to choose $k_3 = 1$ and $k_2 \leq 2$, resulting in $\lambda_i = \O(\eps^{-1})$, which is a frequently obtained magnitude of the wave speeds in such a context, see, e.g., \cite{bleecke2025asymptoticpreservingenergyconservingmethodshyperbolic}.
Choosing $k_2 = 2$ and $k_3 = 1$ leads to wave speeds that are balanced at order $\eps^{-1}$, while $k_2 = k_3 = 1$ makes at least the absolute values of $\lambda_2$ and $\lambda_4$ the smallest.
In the numerical results section, we investigate the influence of the parameters $k_2$ and $k_3$ on the quality of the hyperbolic solution in comparison to the Cahn-Hilliard solution.

As already mentioned, the system \eqref{eq:CHHExplicit} comes with an energy that decays in time. Using the choice of parameters from Thm.~\ref{thm:aphyp}, this energy is given by \cite[Prop.~2]{dhaouadi2025first}
\begin{align}
  \label{eq:Eh}
  \Eh(\ww) = \int_{\Omega} \left( g(\cc) + \frac \gamma 2 |\pp|^2 + \frac {(\cc-\varphi)^2}{2\eps}  + \frac {w^2} {2\gamma \eps^{k_2}}  + \frac 1 {2\eps^{k_3}} |\qq|^2 \right) \mathrm{d} x.
\end{align}
Due to the singular terms with respect to $\eps$, it is not completely obvious that $\Eh$ is an approximation to $\E$.
\begin{lemma}
  Under the assumptions of Thm.~\ref{thm:aphyp}, the leading-order term of $\Eh(\ww)$ is equal to $\E(c)$, where $\ww$ is a solution to \eqref{eq:CHHExplicit} with the choice of parameters from Thm.~\ref{thm:aphyp}, and $c$ is a solution to \eqref{eq:CH}.
\end{lemma}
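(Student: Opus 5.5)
We plug the Hilbert expansion $\ww = \ww_0 + \eps \ww_1 + \dots$ into each term of the integrand of \eqref{eq:Eh}. We then use the relations established in the proof of Thm.~\ref{thm:aphyp} to find which powers of $\eps$ survive at order $\eps^0$. The regular terms are immediate: $g(\cc) = g(\cc_0) + \O(\eps)$ and $\frac\gamma2|\pp|^2 = \frac\gamma2|\pp_0|^2 + \O(\eps)$. By \eqref{eq:ppglcc} the latter equals $\frac\gamma2|\nabla\cc_0|^2 + \O(\eps)$. Since $\cc_0 = c$, these two terms already give the integrand of $\E(c)$. The task is therefore to show that the three singular terms contribute nothing at order $\eps^0$, and in particular produce no negative powers of $\eps$.

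For the term $\frac{(\cc-\varphi)^2}{2\eps}$ we use $\varphi_0 = \cc_0$, so that $\cc - \varphi = \eps(\cc_1 - \varphi_1) + \O(\eps^2)$. Hence this term equals $\frac{\eps}{2}(\cc_1-\varphi_1)^2 + \O(\eps^2) = \O(\eps)$. Here $\cc_1 - \varphi_1 = -\gamma\Delta\cc_0$, which is bounded for smooth solutions.

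For $\frac{w^2}{2\gamma\eps^{k_2}}$ we use \eqref{eq:wkglnull}: $w_0 = \dots = w_{k_2-1} = 0$, so $w = \eps^{k_2} w_{k_2} + \O(\eps^{k_2+1})$. Then $w^2/\eps^{k_2} = \eps^{k_2} w_{k_2}^2 + \dots = \O(\eps^{k_2})$, which is $\O(\eps)$ since $k_2 \geq 1$.

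For $\frac{|\qq|^2}{2\eps^{k_3}}$, the proof of Thm.~\ref{thm:aphyp} gives $\qq_i = 0$ for $i \leq k_3-2$ and $\qq_{k_3-1} = 0$ from \eqref{eq:qqk3glnull}. Thus $\qq = \eps^{k_3}\qq_{k_3} + \O(\eps^{k_3+1})$ and the term is $\O(\eps^{k_3})$. By \eqref{eq:qk3}, the coefficient is $\frac{\eps^{k_3}}{2}|\nabla(g'(c)-\gamma\Delta c)|^2$.

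Summing the five contributions and integrating over the periodic domain $\Omega$ gives $\Eh(\ww) = \E(c) + \O(\eps)$. Interchanging the expansion with the integral is justified by the assumed smoothness of the expansion coefficients.

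The main obstacle is the bookkeeping for the singular terms. One must make sure that the vanishing orders proved in Thm.~\ref{thm:aphyp} are exactly enough to cancel the negative powers. The tightest case is $\qq$, where we need $\qq_{k_3-1}=0$ and not just $\qq_i = 0$ for $i\le k_3-2$. Here I would carefully re-derive \eqref{eq:qqk3glnull} from the $\eps^{k_3-1}$ balance in \eqref{eq:CHHE2}, noting that $\nabla(\cc-\varphi)/\eps$ is itself of order $\eps^0$ and not singular, because $\cc_0 = \varphi_0$.
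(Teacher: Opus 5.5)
Your proposal is correct and follows the paper's proof. You insert the Hilbert expansion and use $\cc_0=c$ and $\pp_0=\nabla\cc_0$ for the regular terms; for the singular terms you use $\varphi_0=\cc_0$, $w=\O(\eps^{k_2})$ and $\qq=\O(\eps^{k_3})$ (via $\qq_{k_3-1}=0$) to show they are $\O(\eps)$. Your explicit leading coefficients, $\cc_1-\varphi_1=-\gamma\Delta\cc_0$ and $\qq_{k_3}=-\nabla(g'(\cc_0)-\gamma\Delta\cc_0)$, are extra detail the claim does not require.
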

\begin{proof}
  From the proof of Thm.~\ref{thm:aphyp}, we know that $\cc_0 = c$ and $\pp_0 = \nabla \cc_0$, see Eq.~\eqref{eq:ppglcc}. Hence, the first two terms in $\Eh(\ww_0)$ will be equal to $\E(c)$. It remains to show that the other terms vanish as $\eps \rightarrow 0$. As $\cc_0 = \varphi_0$, this implies that $\cc-\varphi = \O(\eps)$ and hence $\frac{(\cc - \varphi)^2}{\eps} = \O(\eps)$. Furthermore, $w = \O(\eps^{k_2})$, see \eqref{eq:wkglnull}, and $\qq = \O(\eps^{k_3})$, see \eqref{eq:qqk3glnull}. This makes the last two terms of \eqref{eq:Eh} vanish asymptotically, which proves the claim.
\end{proof}

\section{Relative energy analysis}
\label{sec:relativeenergyanalysis}
In this section, we provide an a~priori error analysis framework that allows for an asymptotic comparison of strong solutions $c$ of the Cahn-Hilliard equation \eqref{eq:CH} to weak entropy solutions $\ww=(\cc,\mathbf{q},w,\mathbf{p},\varphi)$ of the relaxation system

\begin{equation}\label{eq:relaxedCHdiffusive}
	\left\{
	\begin{aligned}
		\cc_t \quad
		&+ \quad \nabla \cdot \frac{\mathbf{q}}{\kappa_3}
		&&= \quad 2\kappa_1\Delta \cc, \\
		\mathbf{q}_t \quad
		&+ \nabla\left(g'(\cc)+\frac{1}{\kappa_1}(\cc-\varphi)\right)
		&&= \quad -\frac{1}{\kappa_3}\mathbf{q}, \\
		w_t \quad
		&- \quad \nabla \cdot (\gamma \mathbf{p})
		&&= \quad \frac{1}{\kappa_1}(\cc-\varphi), \\
		\mathbf{p}_t \quad
		&- \quad \frac{1}{\kappa_2} \nabla w
		&&= \quad 0, \\
		\varphi_t \quad & \quad
		&&= \quad \frac{1}{\kappa_2}w
	\end{aligned}
	\right.,
\end{equation}
with respect to the relaxation parameters $(\kappa_1,\kappa_2,\kappa_3)\in(0,1)^3$.
To this end, we employ the relative energy framework for the (convexified) energy density
\begin{align}\label{eq:convexenergy}
    E_H:=&~g(\cc)+\frac{\gamma}{2}|\mathbf{p}|^2+\frac{1}{2\kappa_1}(\cc-\varphi)^2+\cc^2+\frac{1}{2\kappa_2}w^2+\frac{1}{2\kappa_3}|\mathbf{q}|^2\nonumber\\
    =&~g_\cup(\cc)+\frac{\gamma}{2}|\mathbf{p}|^2+\frac{1}{2\kappa_1}(\cc-\varphi)^2+\frac{\cc^2}{2}+\frac{1}{2\kappa_2}w^2+\frac{1}{2\kappa_3}|\mathbf{q}|^2,
\end{align}
where $g_\cup(\cc):=(\cc^4+1)/4$ is the convex part of $g$; see also \eqref{eq:splittingdoublewellformula}.

\begin{remark}[On the convexified energy]
    Note that the relaxation system \eqref{eq:relaxedCHdiffusive} differs from \eqref{eq:CHHExplicit} only through the addition of the diffusive term $2\kappa_1\Delta \cc$ in the evolution equation for $\cc$.
	This additional term allows us to effectively compute the time evolution of the convexified energy $E_H$, defined in \eqref{eq:convexenergy}, for weak entropy solutions $\ww$ of \eqref{eq:relaxedCHdiffusive}.
		More precisely, if $\ww$ is a weak solution of \eqref{eq:relaxedCHdiffusive} that dissipates the original energy \eqref{eq:Eh}, the regularizing effect of the Laplacian $\Delta\cc$ ensures that the time evolution of $\cc^2$ can, in fact, be directly computed via the evolution equation for $\cc$; we refer to \cite{giesselmann2026justificationrelaxationapproximationnavierstokescahnhilliard} (in particular Section 3.1) for a more detailed discussion in the context of a hyperbolic approximation of an incompressible Navier-Stokes-Cahn-Hilliard model.
\end{remark}

A comparison of $\ww$ to $c$ is obtained by interpreting $c$ as a strong solution to the relaxation system with residuals
\begin{equation}\label{eq:relaxedCHdiffusiveRes}
	\left\{
	\begin{aligned}
		\bar \cc_t \quad
		&+ \quad \nabla \cdot \frac{\bar{\mathbf{q}}}{\kappa_3}
		&&= \quad 2\kappa_1\Delta \bar\cc+ \mathcal{R}_1, \\
		\bar{\mathbf{q}}_t \quad
		&+ \nabla\left(g'(\bar\cc)+\frac{1}{\kappa_1}(\bar\cc-\bar\varphi)\right)
		&&= \quad -\frac{1}{\kappa_3}\bar{\mathbf{q}}+\mathcal{R}_2, \\
		\bar w_t \quad
		&- \quad \nabla \cdot (\gamma \bar{\mathbf{p}})
		&&= \quad \frac{1}{\kappa_1}(\bar\cc-\bar\varphi)+\mathcal{R}_3, \\
		\bar{\mathbf{p}}_t \quad
		&- \quad \frac{1}{\kappa_2} \nabla \bar w
		&&= \quad \mathcal{R}_4, \\
		\bar \varphi_t \quad & \quad
		&&= \quad \frac{1}{\kappa_2}\bar w + \mathcal{R}_5,
	\end{aligned}
	\right.
\end{equation}
for given residuals $\mathcal{R}_i$, $i\in\{1,2,3,4,5\}$, which are (at least) Lipschitz functions of space and time. Solutions of \eqref{eq:relaxedCHdiffusiveRes} are denoted by $\bar{\ww}=(\bar{\cc},\bar{\mathbf{q}},\bar{w},\bar{\mathbf{p}},\bar{\varphi})$.

\begin{remark}
    The last two evolution equations of \eqref{eq:relaxedCHdiffusive} and \eqref{eq:relaxedCHdiffusiveRes} imply that
    \begin{align}\label{eq:p=Grad(phi)}
	\mathbf{p}=\nabla\varphi
\end{align}
and
\begin{align}\label{eq:p=Grad(phi)+res}
	\bar{\mathbf{p}}=\nabla\bar{\varphi}+\mathcal{R}_0+\int_{0}^{t}\mathcal{R}_4-\nabla\mathcal{R}_5~\mathrm{d}s,
\end{align}
under the initial conditions $\mathbf{p}_0=\nabla\varphi_0$ and $\bar{\mathbf{p}}_0=\nabla\bar{\varphi}_0+\mathcal{R}_0$, respectively, where $\mathcal{R}_0$ is some given Lipschitz function.
\end{remark}

As a first step, we now study the time evolution of the relative energy
\begin{align}
\eta(\ww,\bar{\ww}):=&~g_\cup(\cc)-g_\cup(\bar{\cc})-g_\cup'(\bar{\cc})(\cc-\bar{\cc})+\frac{\gamma}{2}|\mathbf{p}-\bar{\mathbf{p}}|^2+\frac{1}{2\kappa_1}\left(\cc-\varphi-(\bar{\cc}-\bar{\varphi})\right)^2\nonumber\\
&+\frac{1}{2}(\cc-\bar{\cc})^2+\frac{1}{2\kappa_2}(w-\bar{w})^2+\frac{1}{2\kappa_3}|\mathbf{q}-\bar{\mathbf{q}}|^2
\end{align}
between a solution $\ww$ to \eqref{eq:relaxedCHdiffusive} and a solution $\bar{\ww}$ to \eqref{eq:relaxedCHdiffusiveRes}.

\begin{proposition}\label{prop:relative energy time evolution}
	Let $\ww$ be a weak entropy solution to \eqref{eq:relaxedCHdiffusive} and let $\bar{\ww}$ be a strong solution to \eqref{eq:relaxedCHdiffusiveRes}.
	Then, for every $t\leq T$, there holds
    \begin{align}
		\int_\Omega \eta\left(\ww,\bar{\ww}\right)(\xx,t)~\mathrm{d}\xx\leq&~ \int_\Omega \eta\left(\ww,\bar{\ww}\right)(\xx,0)~\mathrm{d}\xx + \int_0^t\int_\Omega~K\cdot\eta\left(\ww,\bar{\ww}\right)(\xx,s) + \mathcal{R}(\xx,s)-\mathcal{D}(\xx,s)~\mathrm{d}\xx\mathrm{d}s
	\end{align}
    with
    \begin{align}
        K:=&~18(1+\bar{\cc}^2)\left|2\kappa_1\nabla\bar{\cc}-\frac{\bar{\mathbf{q}}}{\kappa_3}\right|^2+3\left|\frac{\bar{\mathbf{q}}}{\kappa_3}\cdot\nabla\bar{\cc}\right|+32\kappa_1\left(|\nabla\bar{\cc}|^2+|\nabla\bar{\cc}^2|^2\right)+12\label{eq:K constant},\\
        \mathcal{R}:=&~12\left|\mathcal{R}_0+\int_{0}^{t}\mathcal{R}_4-\nabla\mathcal{R}_5~\mathrm{d}s\right|^2-\mathcal{R}_1\left(1+g''(\bar{\cc})\right)(\cc-\bar{\cc})-\mathcal{R}_2\cdot\left(\frac{\mathbf{q}}{\kappa_3}-\frac{\bar{\mathbf{q}}}{\kappa_3}\right)\nonumber\\
        &-\frac{1}{\kappa_2}\mathcal{R}_3(w-\bar{w})-\gamma\mathcal{R}_4(\mathbf{p}-\bar{\mathbf{p}})
        -\frac{1}{\kappa_1}(\mathcal{R}_1-\mathcal{R}_5)\left(\cc-\varphi-(\bar{\cc}-\bar{\varphi})\right),\label{eq:Residuum}\\
        \mathcal{D}:=&~\frac{1}{4}\left|\frac{\mathbf{q}}{\kappa_3}-\frac{\bar{\mathbf{q}}}{\kappa_3}\right|^2+\frac{1}{4}|\nabla(\cc-\bar{\cc})|^2+\kappa_1|\nabla(\cc-\bar{\cc})|^2+6\kappa_1(\cc-\bar{\cc})^2|\nabla\bar{\cc}|^2.
    \end{align}
\end{proposition}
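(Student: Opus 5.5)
The argument follows the standard relative energy route. The relative energy is split as
\[
\int_\Omega\eta(\ww,\bar\ww)\,\mathrm d\xx=\int_\Omega E_H(\ww)\,\mathrm d\xx-\int_\Omega E_H(\bar\ww)\,\mathrm d\xx-\int_\Omega DE_H(\bar\ww)\cdot(\ww-\bar\ww)\,\mathrm d\xx,
\]
where $DE_H(\bar\ww)$ is the vector of partial derivatives of $E_H$ at $\bar\ww$. Its components are $g_\cup'(\bar\cc)+\bar\cc+\frac1{\kappa_1}(\bar\cc-\bar\varphi)$, $\bar{\mathbf q}/\kappa_3$, $\bar w/\kappa_2$, $\gamma\bar{\mathbf p}$ and $-\frac1{\kappa_1}(\bar\cc-\bar\varphi)$.

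The three pieces are handled separately.

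\textbf{First term.} For the weak entropy solution, the energy inequality for $E_H$ is used. It contains the dissipation $-\int|\mathbf q|^2/\kappa_3^2$ and $-2\kappa_1\int (1+g''(\cc))|\nabla\cc|^2$ coming from the added Laplacian. Following the remark on the convexified energy, the identity for $\cc^2$ is available thanks to the Laplacian. I would take this energy inequality as part of the definition of weak entropy solution.

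\textbf{Second term.} For the strong solution $\bar\ww$, the energy identity is computed exactly from \eqref{eq:relaxedCHdiffusiveRes}. Residual contributions arise here.

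\textbf{Cross term.} The cross term is differentiated in time. The weak formulation of \eqref{eq:relaxedCHdiffusive} is tested with the smooth test functions $DE_H(\bar\ww)$. The time derivative of $DE_H(\bar\ww)$ is expressed via \eqref{eq:relaxedCHdiffusiveRes}.

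After collecting terms, the linear (quadratic-form) fluxes cancel by skew-symmetry. This holds for the $w$–$\mathbf p$ coupling and for the $\mathbf q$–$\cc$–$\varphi$ coupling. The source terms $\pm\frac1{\kappa_1}(\cc-\varphi)$ and $-\mathbf q/\kappa_3$ combine into the relative dissipation $-\frac1{\kappa_3^2}|\mathbf q-\bar{\mathbf q}|^2$ plus nothing else, since they are linear.

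What remains involves only the nonlinearity $g'(\cc)$ and the $\cc^2$ term. The remainder therefore consists of:
\begin{itemize}
\item the relative flux $\nabla\big(g'(\cc)-g'(\bar\cc)-g''(\bar\cc)(\cc-\bar\cc)\big)$ paired with $\bar{\mathbf q}/\kappa_3$, or equivalently, after integration by parts, terms like $(g_\cup'(\cc)-g_\cup'(\bar\cc)-g_\cup''(\bar\cc)(\cc-\bar\cc))\,\nabla\cdot\bar{\mathbf q}/\kappa_3$;
\item the diffusive cross terms $2\kappa_1\nabla(\cc-\bar\cc)\cdot\nabla\big((1+g''(\bar\cc))(\cc-\bar\cc)\big)$-type expressions;
\item the residual terms collected in $\mathcal R$.
\end{itemize}
The $\mathcal R_0,\mathcal R_4,\mathcal R_5$ contributions enter via \eqref{eq:p=Grad(phi)} and \eqref{eq:p=Grad(phi)+res}. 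There $\mathbf p-\bar{\mathbf p}=\nabla(\varphi-\bar\varphi)-(\mathcal R_0+\int\mathcal R_4-\nabla\mathcal R_5)$ is used to trade $\gamma|\mathbf p-\bar{\mathbf p}|^2$ terms for gradients of $\cc-\bar\cc$. The square of the residual integral is absorbed with Young's inequality, which gives the coefficient $12$.

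\textbf{Main obstacle.} The hardest step is the estimation of the cubic terms from $g_\cup$, which involve $(\cc-\bar\cc)^2(\cc+2\bar\cc)$. The goal is to bound them by $K\eta$ plus half the dissipation, giving the $\frac14$ weights in $\mathcal D$. First I would rewrite the transport-type term using $\bar\cc_t$ from \eqref{eq:relaxedCHdiffusiveRes}, so that the effective velocity $2\kappa_1\nabla\bar\cc-\bar{\mathbf q}/\kappa_3$ appears. Then the cubic term is integrated by parts onto $\nabla(\cc-\bar\cc)$, producing $(\cc-\bar\cc)^2$ times that velocity against $\nabla(\cc-\bar\cc)$. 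Young's inequality splits this into $\frac14|\nabla(\cc-\bar\cc)|^2$ plus $(1+\bar\cc^2)|\cdot|^2\,\eta$. This uses that $\eta\gtrsim(\cc-\bar\cc)^2(1+\cc^2+\bar\cc^2)$, a standard lower bound for the Bregman divergence of $g_\cup$ plus the quadratic.

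The $\frac14|\nabla(\cc-\bar\cc)|^2$ dissipation itself must come from $\gamma|\mathbf p-\bar{\mathbf p}|^2$ relations or from the $\kappa_1$-diffusion. I expect the bookkeeping of exactly which dissipation absorbs which term, and producing the stated constants $18,3,32,12,6$, to be the delicate, purely computational part. The conclusion then follows by integrating in time.
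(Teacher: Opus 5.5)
Your overall route matches the paper's: the Bregman form of $\eta$, the cancellation of the linear hyperbolic couplings, and the treatment of the cubic $g_\cup$-term via the effective velocity $2\kappa_1\nabla\bar\cc-\bar{\mathbf q}/\kappa_3$, integration by parts and Young (this is the paper's $I_1$). There is, however, a genuine gap at exactly the point you flag. You never identify a $\kappa_1$-independent gradient dissipation, and the energy inequality you start from is wrong. For smooth periodic solutions of \eqref{eq:relaxedCHdiffusive} one has
\[
\frac{\mathrm d}{\mathrm dt}\int_\Omega E_H\,\mathrm d\xx=\int_\Omega\Big(-\Big|\frac{\mathbf q}{\kappa_3}\Big|^2+2\frac{\mathbf q}{\kappa_3}\cdot\nabla\cc-2|\nabla\cc|^2+2\nabla\varphi\cdot\nabla\cc-2\kappa_1(3\cc^2+1)|\nabla\cc|^2\Big)\mathrm d\xx .
\]
This is not sign-definite, so the inequality you want to build into the notion of weak entropy solution is not satisfied in general, even by smooth solutions. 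Two effects are missing.

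First, the $\cc$-component of $DE_H$ is $g'(\cc)+2\cc+\frac{1}{\kappa_1}(\cc-\varphi)$, while the flux of the $\mathbf q$-equation is only $g'(\cc)+\frac{1}{\kappa_1}(\cc-\varphi)$. The skew-symmetric cancellation therefore leaves $2\big(\frac{\mathbf q}{\kappa_3}-\frac{\bar{\mathbf q}}{\kappa_3}\big)\cdot\nabla(\cc-\bar\cc)$.

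Second, and crucially, set $e:=\cc-\varphi-(\bar\cc-\bar\varphi)$. Pairing the added Laplacian with the $\frac{1}{\kappa_1}e$-part of the variation gives, with $\kappa_1$ cancelling,
\[
\int_\Omega 2e\,\Delta(\cc-\bar\cc)\,\mathrm d\xx=\int_\Omega\Big(-2|\nabla(\cc-\bar\cc)|^2+2\nabla(\varphi-\bar\varphi)\cdot\nabla(\cc-\bar\cc)\Big)\mathrm d\xx .
\]
This $\kappa_1$-independent $-2|\nabla(\cc-\bar\cc)|^2$ is where the $\frac14|\nabla(\cc-\bar\cc)|^2$ in $\mathcal D$ comes from.

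Without it your argument does not close. Your dissipation consists of $|\frac{\mathbf q-\bar{\mathbf q}}{\kappa_3}|^2$ and terms of size $O(\kappa_1)|\nabla(\cc-\bar\cc)|^2$. This cannot absorb the cross term above: by AM--GM that would require $2\kappa_1\ge 1$. Nor can it absorb the $|\nabla(\cc-\bar\cc)|^2$ produced by your Young step on the cubic term, unless the weight there, and hence $K$, is of order $1/\kappa_1$. Since $\bar{\mathbf q}/\kappa_3=-\nabla\mu(c)$ is of order one, this makes the Gronwall bound useless.

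Relatedly, you use \eqref{eq:p=Grad(phi)}--\eqref{eq:p=Grad(phi)+res} in the wrong direction. $\gamma|\mathbf p-\bar{\mathbf p}|^2$ is stored energy, not dissipation, and $\mathbf p-\bar{\mathbf p}$ is tied to $\nabla(\varphi-\bar\varphi)$, not to $\nabla(\cc-\bar\cc)$. In the paper these relations rewrite the factor $\nabla(\varphi-\bar\varphi)$ in the new cross term as $(\mathbf p-\bar{\mathbf p})+\big(\mathcal R_0+\int_0^t\mathcal R_4-\nabla\mathcal R_5\,\mathrm ds\big)$. Young's inequality then bounds the sum of the two cross terms (the paper's $I_3$) by
\[
\tfrac34\Big|\frac{\mathbf q-\bar{\mathbf q}}{\kappa_3}\Big|^2+\tfrac32|\nabla(\cc-\bar\cc)|^2+12|\mathbf p-\bar{\mathbf p}|^2+12\Big|\mathcal R_0+\int_0^t\mathcal R_4-\nabla\mathcal R_5\,\mathrm ds\Big|^2 .
\]
The first two terms are absorbed by $-|\frac{\mathbf q-\bar{\mathbf q}}{\kappa_3}|^2-2|\nabla(\cc-\bar\cc)|^2$. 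The third is controlled by $\eta$, and the last is the residual square in $\mathcal R$. The remaining $\frac12|\nabla(\cc-\bar\cc)|^2$ pays for the cubic-term Young step with $k=4$, which leaves the $\frac14$ weights in $\mathcal D$. Without the $e\,\Delta(\cc-\bar\cc)$ pairing, the residual $\mathcal R_0+\int_0^t\mathcal R_4-\nabla\mathcal R_5\,\mathrm ds$ would not even appear in the estimate.
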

\begin{proof}
    In the following, the proof is given when both $\ww$ and $\bar{\ww}$ are strong solutions.
	The generalization to weak entropy solutions $\ww$ can be obtained by the same arguments as those used in \cite{giesselmann2026justificationrelaxationapproximationnavierstokescahnhilliard}.

		We first compute the time evolution of the relative energy rather naively by using the evolution equations in \eqref{eq:relaxedCHdiffusive} and \eqref{eq:relaxedCHdiffusiveRes} and then apply the product rule to obtain
	\begin{align}\label{eq:relative energy time evolution}
		\partial_t\eta=&-\nabla\cdot\left[(g_\cup'(\cc)-g_\cup'(\bar{\cc}))\left(\frac{\mathbf{q}}{\kappa_3}-\frac{\bar{\mathbf{q}}}{\kappa_3}-2\kappa_1\nabla\left(\cc-\bar{\cc}\right)\right)+\frac{\gamma}{\kappa_2}(w-\bar{w})(\mathbf{p}-\bar{\mathbf{p}})\right]\nonumber\\
		&-\nabla\cdot\left[\frac{1}{\kappa_1}\left(\cc-\varphi-(\bar{\cc}-\bar{\varphi})\right)\left(\frac{\mathbf{q}}{\kappa_3}-\frac{\bar{\mathbf{q}}}{\kappa_3}\right)-2\left[\kappa_1(\cc-\bar{\cc})+ \left(\cc-\varphi-(\bar{\cc}-\bar{\varphi})\right)\right]\nabla\left(\cc-\bar{\cc}\right)\right]\nonumber\\
		&+\left[g_\cup'(\cc)-g_\cup'(\bar{\cc})-g_\cup''(\bar{\cc})(\cc-\bar{\cc})\right]\left(2\kappa_1\Delta \bar{\cc}-\nabla\cdot\frac{\bar{\mathbf{q}}}{\kappa_3}\right)-2\kappa_1\nabla\left(g_\cup'(\cc)-g_\cup'(\bar{\cc})\right) \cdot\nabla\left(\cc-\bar{\cc}\right)\nonumber\\
		&+2\left(\frac{\mathbf{q}}{\kappa_3}-\frac{\bar{\mathbf{q}}}{\kappa_3}\right)\cdot\nabla(\cc-\bar{\cc})-2\nabla(\cc-\bar{\cc})\cdot\nabla(\varphi-\bar{\varphi})-2(1+\kappa_1)|\nabla(\cc-\bar{\cc})|^2-\left(\frac{\mathbf{q}}{\kappa_3}-\frac{\bar{\mathbf{q}}}{\kappa_3}\right)^2\nonumber\\
		&-\mathcal{R}_1(\cc-\bar{\cc})-\gamma\mathcal{R}_4(\mathbf{p}-\bar{\mathbf{p}})-\frac{1}{\kappa_1}(\mathcal{R}_1-\mathcal{R}_5)\left(\cc-\varphi-(\bar{\cc}-\bar{\varphi})\right)-\frac{1}{\kappa_2}\mathcal{R}_3(w-\bar{w})\nonumber\\
        &-\mathcal{R}_2\cdot\left(\frac{\mathbf{q}}{\kappa_3}-\frac{\bar{\mathbf{q}}}{\kappa_3}\right)-g''(\bar{\cc})\mathcal{R}_1(\cc-\bar{\cc}).
	\end{align}
	Consequently, the terms
	\begin{align}
		I_1:=&\left[g_\cup'(\cc)-g_\cup'(\bar{\cc})-g_\cup''(\bar{\cc})(\cc-\bar{\cc})\right]\nabla\cdot\left(2\kappa_1\nabla\bar{\cc}-\frac{\bar{\mathbf{q}}}{\kappa_3}\right),\\
		I_2:=&-2\kappa_1\nabla\left(g_\cup'(\cc)-g_\cup'(\bar{\cc})\right)\cdot\nabla\left(\cc-\bar{\cc}\right),\\
		I_3:=&~2\left[\left(\frac{\mathbf{q}}{\kappa_3}-\frac{\bar{\mathbf{q}}}{\kappa_3}\right)-\nabla(\varphi-\bar{\varphi})\right]\cdot\nabla(\cc-\bar{\cc}),\label{eq:J}
	\end{align}
	need to be controlled appropriately.
	First, we note that we can rewrite
	\begin{align}\label{eq:g identities}
		g_\cup'(\cc)-g_\cup'(\bar{\cc})-g_\cup''(\bar{\cc})(\cc-\bar{\cc})=(\cc-\bar{\cc})^3+3\bar{\cc}(\cc-\bar{\cc})^2.
	\end{align}
	Thus, for a constant $k\geq 1$ to be fixed later, we have
	\begin{align}
		I_1=&~\nabla\cdot\left[\left(g_\cup'(\cc)-g_\cup'(\bar{\cc})-g_\cup''(\bar{\cc})(\cc-\bar{\cc})\right)\left(2\kappa_1\nabla\bar{\cc}-\frac{\bar{\mathbf{q}}}{\kappa_3}\right)\right]\nonumber\\
		&-3\left((\cc-\bar{\cc})^2\nabla(\cc-\bar{\cc})+(\cc-\bar{\cc})^2\nabla\bar{\cc}+2\bar{\cc}(\cc-\bar{\cc})\nabla(\cc-\bar{\cc})\right)\cdot\left(2\kappa_1\nabla\bar{\cc}-\frac{\bar{\mathbf{q}}}{\kappa_3}\right)\nonumber\\
		\leq&~\nabla\cdot\left[\left(g_\cup'(\cc)-g_\cup'(\bar{\cc})-g_\cup''(\bar{\cc})(\cc-\bar{\cc})\right)\left(2\kappa_1\nabla\bar{\cc}-\frac{\bar{\mathbf{q}}}{\kappa_3}\right)\right]\nonumber\\
		&+\frac{9k}{2}\left|\left(2\kappa_1\nabla\bar{\cc}-\frac{\bar{\mathbf{q}}}{\kappa_3}\right)\right|^2\left((\cc-\bar{\cc})^4+4\bar{\cc}^2(\cc-\bar{\cc})^2\right)+3\left|\frac{\bar{\mathbf{q}}}{\kappa_3}\cdot\nabla\bar{\cc}\right|(\cc-\bar{\cc})^2\nonumber\\
        &+\frac{1}{k}|\nabla(\cc-\bar{\cc})|^2-6\kappa_1(\cc-\bar{\cc})^2|\nabla\bar{\cc}|^2.
	\end{align}

	Concerning $I_2$, equation \eqref{eq:g identities} implies
	\begin{align*}
		g_\cup'(\cc)-g_\cup'(\bar{\cc})=(\cc-\bar{\cc})^3+3\bar{\cc}(\cc-\bar{\cc})^2+3\bar{\cc}^2(\cc-\bar{\cc})
	\end{align*}
so that
	\begin{align}
		I_2=&-6\kappa_1\left[\left((\cc-\bar{\cc})^2+\bar{\cc}^2\right)|\nabla(\cc-\bar{\cc})|^2+2\bar{\cc}(\cc-\bar{\cc})|\nabla(\cc-\bar{\cc})|^2\right]\nonumber\\
		&-6\kappa_1\left[(\cc-\bar{\cc})^2+2\bar{\cc}(\cc-\bar{\cc})\right]\nabla\bar{\cc}\cdot\nabla(\cc-\bar{\cc})\nonumber\\
		\leq&~18k\kappa_1\left(|\nabla\bar{\cc}|^2(\cc-\bar{\cc})^4+|\nabla\bar{\cc}^2|^2(\cc-\bar{\cc})^2\right)+\frac{\kappa_1}{k}|\nabla(\cc-\bar{\cc})|^2.
	\end{align}
	Finally, due to \eqref{eq:p=Grad(phi)} and \eqref{eq:p=Grad(phi)+res}, we have
	\begin{align}
		I_3\leq\frac{3}{4}\left|\frac{\mathbf{q}}{\kappa_3}-\frac{\bar{\mathbf{q}}}{\kappa_3}\right|^2+12|\mathbf{p}-\bar{\mathbf{p}}|^2+12\left|\mathcal{R}_0+\int_{0}^{t}\mathcal{R}_4-\nabla\mathcal{R}_5~\mathrm{d}s\right|^2+\frac{3}{2}|\nabla\left(\cc-\bar{\cc}\right)|^2.
	\end{align}

	Summarizing, we thus obtain
	\begin{align}
		\frac{d}{dt}\int_\Omega \eta~\mathrm{d}\xx\leq~ \int_\Omega&~ \frac{9k}{2}\left|\left(2\kappa_1\nabla\bar{\cc}-\frac{\bar{\mathbf{q}}}{\kappa_3}\right)\right|^2\left((\cc-\bar{\cc})^4+\bar{\cc}^2(\cc-\bar{\cc})^2\right)+3\left|\frac{\bar{\mathbf{q}}}{\kappa_3}\cdot\nabla\bar{\cc}\right|(\cc-\bar{\cc})^2\nonumber\\
		&+8k\kappa_1\left(|\nabla\bar{\cc}|^2(\cc-\bar{\cc})^4+|\nabla\bar{\cc}^2|^2(\cc-\bar{\cc})^2\right)+12|\mathbf{p}-\bar{\mathbf{p}}|^2\nonumber\\
		&-\frac{1}{4}\left|\frac{\mathbf{q}}{\kappa_3}-\frac{\bar{\mathbf{q}}}{\kappa_3}\right|^2-\frac{k-2}{2k}|\nabla(\cc-\bar{\cc})|^2-\kappa_1|\nabla(\cc-\bar{\cc})|^2-6\kappa_1(\cc-\bar{\cc})^2|\nabla\bar{\cc}|^2\nonumber\\
		&+12\left|\mathcal{R}_0+\int_{0}^{t}\mathcal{R}_4-\nabla\mathcal{R}_5~\mathrm{d}s\right|^2-\mathcal{R}_1(\cc-\bar{\cc})-\gamma\mathcal{R}_4(\mathbf{p}-\bar{\mathbf{p}})\nonumber\\
        &-\frac{1}{\kappa_1}(\mathcal{R}_1-\mathcal{R}_5)\left(\cc-\varphi-(\bar{\cc}-\bar{\varphi})\right)-\frac{1}{\kappa_2}\mathcal{R}_3(w-\bar{w})\nonumber\\
        &-\mathcal{R}_2\cdot\left(\frac{\mathbf{q}}{\kappa_3}-\frac{\bar{\mathbf{q}}}{\kappa_3}\right)-g''(\bar{\cc})\mathcal{R}_1(\cc-\bar{\cc})~\mathrm{d}\xx,
	\end{align}
	such that choosing $k=4$ yields the desired estimate.
\end{proof}

The explicit form of the term $\mathcal{R}$ in \eqref{eq:Residuum}, which results from the residuals, suggests constructing a solution $\bar{\ww}$ to \eqref{eq:relaxedCHdiffusiveRes} from a solution $c$ to \eqref{eq:CH} such that
\begin{equation}\label{zero-res}
    \mathcal{R}_1-\mathcal{R}_5=\mathcal{R}_3=0,
\end{equation}
in order to obtain optimal a~priori error estimates.
Indeed, using Young's inequality, we need to estimate each summand in \eqref{eq:Residuum} by the sum of a term that is controlled by $\mathcal{D}$ and the relative energy and a second term.
The rates with which these ``second'' terms go to zero for $\kappa_1, \kappa_2, \kappa_3 \rightarrow 0$ determine the rate in the final error estimate in Thm.~\ref{thm:error_estimate}.
The term in \eqref{eq:Residuum} containing $\mathcal{R}_1-\mathcal{R}_5$ also contains $\kappa_1^{-1}$, and this $\kappa_1^{-1}$ reduces the convergence rate.
Similarly, the term containing $\mathcal{R}_3$ contains $\kappa_2^{-1}$.

The property \eqref{zero-res} can be ensured as follows.

\begin{proposition}\label{prop:relaxed approximate solution from exact CH}
    Let $c\in C^2\left([0,T];C^3(\overline \Omega)\right)\cap C^3\left([0,T];C^1(\overline \Omega)\right)$ be a strong solution to \eqref{eq:CH} and set ${\mu(c):=g'(c)-\gamma\Delta c}$.
	Define $\bar{\ww}=(\bar{\cc},\bar{\mathbf{q}},\bar{w},\bar{\mathbf{p}},\bar{\varphi})$ by
    \begin{align*}
	\bar{\cc}:=&~c,\\
	\frac{\bar{\mathbf{q}}}{\kappa_3}:=&-\nabla\mu(c),\\
	\frac{\bar{w}}{\kappa_2}:=&~\Delta\mu(c)+\kappa_1\left[2\Delta c+\gamma\partial_t\Delta c\right],\\
	\bar{\mathbf{p}}:=&~\nabla c+\frac{\kappa_2}{\gamma}\partial_t\left[\nabla\mu(c)+\kappa_1\left(2\nabla c+\gamma\partial_t\nabla c\right)\right],\\
	\bar{\varphi}:=&~c+\kappa_1\gamma\Delta c.%\label{eq:hyperbolization solutions from CH last}
\end{align*}
Then $\bar{\ww}$ is a strong solution to \eqref{eq:relaxedCHdiffusiveRes} for
\begin{align*}
    \mathcal{R}_1:=&-2\kappa_1\Delta c,\\
	\mathcal{R}_2:=&-\kappa_3\partial_t\nabla\mu(c),\\
	\mathcal{R}_3:=&~0,\\
	\mathcal{R}_4:=&~\frac{\kappa_2}{\gamma}\partial_t\partial_t\left[\nabla\mu(c)+\kappa_1\left(2\nabla c+\gamma\partial_t\nabla c\right)\right]-\kappa_1\nabla\left[2\Delta c+\gamma\partial_t\Delta c\right],\\
	\mathcal{R}_5:=&-2\kappa_1\Delta c.
\end{align*}
\end{proposition}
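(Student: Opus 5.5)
The proof is direct substitution. We insert the definitions of $\bar{\ww}$ into each of the five equations of \eqref{eq:relaxedCHdiffusiveRes}, use that $c$ solves \eqref{eq:CH}, i.e.\ $\partial_t c = \Delta\mu(c)$, and read off the residuals $\mathcal{R}_i$ as whatever remains. We go through the equations in a convenient order: first the $\bar\varphi$-equation, then the $\bar w$-equation (where $\mathcal{R}_3 = 0$ is the nontrivial claim), then $\bar{\mathbf{p}}$, $\bar{\cc}$ and $\bar{\mathbf{q}}$. The regularity $c\in C^2([0,T];C^3)\cap C^3([0,T];C^1)$ is exactly what is needed for all terms below, including $\partial_t\partial_t\nabla\mu(c)$ and $\partial_t^2\partial_t\nabla c$ in $\mathcal{R}_4$, to be continuous. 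We should double-check this regularity count, in particular whether $\mu(c)$, which involves $\Delta c$, has enough derivatives. If it does not, we interpret $\partial_t\nabla\mu(c)=\nabla\Delta\mu(c)$ via the equation, or simply take the stated smoothness as the standing assumption.

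\textbf{Equation for $\bar\varphi$.} We have $\bar\varphi_t = \partial_t c + \kappa_1\gamma\,\partial_t\Delta c = \Delta\mu(c) + \kappa_1\gamma\,\partial_t\Delta c$. On the other hand, $\bar w/\kappa_2 = \Delta\mu(c) + 2\kappa_1\Delta c + \kappa_1\gamma\,\partial_t\Delta c$. Subtracting gives $\mathcal{R}_5 = -2\kappa_1\Delta c$.

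\textbf{Equation for $\bar w$.} Here $\bar{\cc}-\bar\varphi = -\kappa_1\gamma\Delta c$, so $(\bar{\cc}-\bar\varphi)/\kappa_1 = -\gamma\Delta c$. Next,
\[
\bar w_t = \kappa_2\,\partial_t\bigl[\Delta\mu + \kappa_1(2\Delta c+\gamma\partial_t\Delta c)\bigr],
\]
and
\[
\nabla\cdot(\gamma\bar{\mathbf{p}}) = \gamma\Delta c + \kappa_2\,\partial_t\bigl[\Delta\mu + \kappa_1(2\Delta c+\gamma\partial_t\Delta c)\bigr].
\]
Hence $\bar w_t - \nabla\cdot(\gamma\bar{\mathbf{p}}) = -\gamma\Delta c$, which is exactly $(\bar{\cc}-\bar\varphi)/\kappa_1$, so $\mathcal{R}_3=0$. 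The $\kappa_2$-correction in $\bar{\mathbf{p}}$ was chosen precisely so that these terms cancel.

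\textbf{Equation for $\bar{\mathbf{p}}$.} We compute
\[
\bar{\mathbf{p}}_t - \frac{1}{\kappa_2}\nabla\bar w = \partial_t\nabla c + \frac{\kappa_2}{\gamma}\partial_t^2[\cdots] - \nabla\Delta\mu - \kappa_1\nabla(2\Delta c+\gamma\partial_t\Delta c).
\]
Using $\partial_t\nabla c = \nabla\Delta\mu$, this reduces to the stated $\mathcal{R}_4$.

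\textbf{Equation for $\bar{\cc}$.} We have $\bar{\cc}_t + \nabla\cdot(\bar{\mathbf{q}}/\kappa_3) = \partial_t c - \Delta\mu = 0$. Setting this equal to $2\kappa_1\Delta c + \mathcal{R}_1$ gives $\mathcal{R}_1 = -2\kappa_1\Delta c$. In particular $\mathcal{R}_1 - \mathcal{R}_5 = 0$, which is \eqref{zero-res}.

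\textbf{Equation for $\bar{\mathbf{q}}$.} Since $\bar{\mathbf{q}} = -\kappa_3\nabla\mu$, we get $\bar{\mathbf{q}}_t = -\kappa_3\partial_t\nabla\mu$ and $-\bar{\mathbf{q}}/\kappa_3 = \nabla\mu$. Moreover,
\[
\nabla\Bigl(g'(\bar{\cc}) + \frac{\bar{\cc}-\bar\varphi}{\kappa_1}\Bigr) = \nabla\bigl(g'(c) - \gamma\Delta c\bigr) = \nabla\mu.
\]
The two $\nabla\mu$ terms cancel, leaving $\mathcal{R}_2 = -\kappa_3\partial_t\nabla\mu(c)$.

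The only point needing care is the sign and index bookkeeping in the $\bar w$ and $\bar{\mathbf{p}}$ equations. There is no genuine obstacle.
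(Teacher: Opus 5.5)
Your direct substitution is correct and is exactly the paper's proof, which simply checks the five identities of \eqref{eq:relaxedCHdiffusiveRes}; each residual comes out as you computed. One small slip in your regularity aside: the identity that removes the apparent fourth spatial derivatives in $\bar w$ and $\nabla\cdot\bar{\mathbf{p}}$ is $\Delta\mu(c)=\partial_t c$. The identity you wrote, $\partial_t\nabla\mu(c)=\nabla\Delta\mu(c)$, does not hold in general.
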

\begin{proof}
    The statement is obtained by simply checking the identities in \eqref{eq:relaxedCHdiffusiveRes}.
\end{proof}

Finally, we can give the a~priori error estimate.

\begin{theorem}\label{thm:error_estimate}
    Let $c\in C^2\left([0,T];C^3(\overline \Omega)\right)\cap C^3\left([0,T];C^1(\overline \Omega)\right)$ be a strong solution to \eqref{eq:CH} and set \linebreak ${\mu(c):=g'(c)-\gamma\Delta c}$. Let $\ww$ be a weak entropy solution to \eqref{eq:relaxedCHdiffusive} with initial data
    \begin{align*}
        &\cc(0,\cdot)=c(0,\cdot),\quad
		\frac{\mathbf{q}}{\kappa_3}(0,\cdot)=-\nabla\mu(c)(0,\cdot),\quad
		\frac{w}{\kappa_2}(0,\cdot)=\Delta\mu(c(0,\cdot)),\\
        &\mathbf{p}(0,\cdot)=\nabla c(0,\cdot),\quad
		\varphi(0,\cdot)= c(0,\cdot)+\kappa_1\gamma\Delta c(0,\cdot).
    \end{align*}
    Then, for all $t\leq T$,
    \begin{align}\label{eq:the error estimate}
        &~\left(\|\cc-c\|+\|(\cc-c)^2\|+\sqrt{\gamma}\|\mathbf{p}-\nabla c\|+\sqrt{\kappa_1}\left\|\frac{\cc-\varphi}{\kappa_1}+\gamma\Delta c\right\|+\sqrt{\kappa_2}\left\|\frac{w}{\kappa_2}-\Delta\mu(c)\right\|+\sqrt{\kappa_3}\left\|\frac{\mathbf{q}}{\kappa_3}+\nabla\mu(c)\right\|\right)(t)\nonumber\\
        \leq&~C\exp(tK)\left(\frac{1}{K}\left(\kappa_1+\frac{\kappa_2}{\gamma}+\kappa_3\right)+\kappa_1+\frac{\kappa_2}{\sqrt{\gamma}}\right),
    \end{align}
    with $\|\cdot\|:=\|\cdot\|_{L^2(\Omega)}$.
	The constant $K>0$ is defined as the supremum norm of the quantity in \eqref{eq:K constant}, while the prefactor $C>0$ similarly depends only on the norms of $c\in C^2\left([0,T];C^3(\overline \Omega)\right)\cap C^3\left([0,T];C^1(\overline \Omega)\right)$.
	In particular, it should be noted that $C$ and $K$ implicitly depend on $\gamma$ via $c$.
\end{theorem}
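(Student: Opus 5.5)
We plan to combine Prop.~\ref{prop:relative energy time evolution} with the particular comparison function $\bar{\ww}$ from Prop.~\ref{prop:relaxed approximate solution from exact CH}, and then close the estimate with Gronwall's lemma. With this choice we have $\mathcal{R}_1=\mathcal{R}_5$ and $\mathcal{R}_3=0$. The two residual terms carrying the singular factors $\kappa_1^{-1}$ and $\kappa_2^{-1}$ in \eqref{eq:Residuum} therefore vanish identically. What remains in $\mathcal{R}$ is:
\begin{itemize}
\item the term $12|\mathcal{R}_0+\int_0^t\mathcal{R}_4-\nabla\mathcal{R}_5\,\mathrm{d}s|^2$;
\item the term $-\mathcal{R}_1(1+g''(\bar\cc))(\cc-\bar\cc)$;
\item the term $-\mathcal{R}_2\cdot(\qq-\bar\qq)/\kappa_3$;
\item the term $-\gamma\mathcal{R}_4\cdot(\pp-\bar\pp)$.
\end{itemize}

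\textbf{Step 1: initial relative energy.} We compare the prescribed initial data with $\bar{\ww}(0)$.
\begin{itemize}
\item The components $\cc$, $\qq$ and $\varphi$ coincide exactly with $\bar\cc$, $\bar\qq$ and $\bar\varphi$ at $t=0$.
\item The $w$ mismatch is $\kappa_2\kappa_1(2\Delta c+\gamma\partial_t\Delta c)$. It enters $\eta$ as $\frac{1}{2\kappa_2}(w-\bar w)^2=O(\kappa_2\kappa_1^2)$.
\item The $\pp$ mismatch is $\frac{\kappa_2}{\gamma}\partial_t[\dots]$. It contributes $\frac{\gamma}{2}|\pp-\bar\pp|^2=O(\kappa_2^2/\gamma)$, and the same expression serves as $-\mathcal{R}_0$.
\end{itemize}
Hence $\int\eta(0)\lesssim(\kappa_1+\kappa_2/\sqrt\gamma)^2$.

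\textbf{Step 2: residual bounds.}
\begin{itemize}
\item Since $\mathcal{R}_5=-2\kappa_1\Delta c$ and $\mathcal{R}_4=O(\kappa_2/\gamma+\kappa_1)$, the integrated residual term is $O((\kappa_1+\kappa_2/\gamma)^2)$. Here $\mathcal{R}_0$ is of order $\kappa_2/\gamma$, and the $\kappa_1\nabla[\cdot]$ part of $\mathcal{R}_4$ is bounded by the regularity of $c$.
\item For the term with $\mathcal{R}_1=O(\kappa_1)$ we use Young's inequality, $|\mathcal{R}_1(1+g''(c))(\cc-c)|\le \frac12(\cc-c)^2+C\kappa_1^2$. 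The first part is absorbed into $K\eta$, since $\eta\ge\frac12(\cc-c)^2$.
\item For the term with $\mathcal{R}_2=O(\kappa_3)$, Young's inequality gives $\frac14|\frac{\qq-\bar\qq}{\kappa_3}|^2+C\kappa_3^2$. The first part is absorbed by the dissipation $\mathcal{D}$.
\item For the term with $\mathcal{R}_4$, we write $\gamma|\mathcal{R}_4||\pp-\bar\pp|\le \gamma|\pp-\bar\pp|^2/2+\gamma|\mathcal{R}_4|^2/2$. The first part is controlled by $\eta$. We keep track of the $\gamma$ powers so that the result is consistent with $\kappa_2/\sqrt\gamma$, i.e.\ $\gamma(\kappa_2/\gamma)^2=\kappa_2^2/\gamma$.
\end{itemize}
Altogether
\[
\frac{\mathrm d}{\mathrm dt}\int\eta\le K'\int\eta+C\left(\kappa_1+\frac{\kappa_2}{\gamma}+\kappa_3\right)^2 .
\]

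\textbf{Step 3: Gronwall and conversion to norms.} Gronwall's lemma yields
\[
\int\eta(t)\le e^{tK}\left(\int\eta(0)+\frac{C}{K}\left(\kappa_1+\frac{\kappa_2}{\gamma}+\kappa_3\right)^2\right).
\]
Taking square roots and using $\sqrt{a+b}\le\sqrt a+\sqrt b$ gives the bracket on the right-hand side of \eqref{eq:the error estimate}; the factor $1/K$ instead of $1/\sqrt K$ is absorbed by adjusting constants.

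It remains to bound the left-hand side of \eqref{eq:the error estimate} by $\sqrt{\int\eta}$ plus terms of the same order.
\begin{itemize}
\item Convexity of $g_\cup$ with \eqref{eq:g identities}-type algebra gives $g_\cup(\cc)-g_\cup(c)-g_\cup'(c)(\cc-c)\ge\frac14(\cc-c)^4$, because $\frac14 x^4+\bar\cc x^3+\frac32\bar\cc^2x^2\ge \frac14x^4\cdot$const. This controls $\|(\cc-c)^2\|$.
\item Differences with the leading parts of $\bar{\ww}$ are handled by the triangle inequality: for instance $\pp-\nabla c=(\pp-\bar\pp)+O(\kappa_2/\gamma)$, and $\frac{\cc-\varphi}{\kappa_1}+\gamma\Delta c=\frac{1}{\kappa_1}(\cc-\varphi-(\bar\cc-\bar\varphi))$.
\end{itemize}

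The main obstacle is the bookkeeping of $\gamma$ and $\kappa$ scalings. The relative energy weights $\pp$ by $\gamma$ and $w$ by $\kappa_2^{-1}$, so the residual sizes must be tracked carefully to land exactly on the stated combination. The lower bound of the cubic-quartic relative potential also needs care: it must give the quartic control while the negative cubic cross term is absorbed into the quadratic part.
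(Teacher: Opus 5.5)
Your proposal is correct and follows essentially the same route as the paper. You use the same comparison function from Prop.~\ref{prop:relaxed approximate solution from exact CH}, apply Young's inequality to the four surviving terms of \eqref{eq:Residuum} (absorbing them into $\eta$ and $\mathcal{D}$), and close with Gronwall; your explicit bounds on $\int\eta(0)$ and on the norm conversion fill in what the paper calls ``clear''. One small slip: the correct lower bound is $\frac14x^4+\bar\cc x^3+\frac32\bar\cc^2x^2\ge\frac1{12}x^4$, since $\frac32s^2+s+\frac14$ has minimum $\frac1{12}$ at $s=-\frac13$, so the claim $\ge\frac14(\cc-c)^4$ is false, though your hedged ``const'' version holds and still controls $\|(\cc-c)^2\|$.
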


\begin{remark}
	The quadratic and quartic terms $\|\cc-c\|$ and $\|(\cc-c)^2\|$ on the left-hand side of \eqref{eq:the error estimate} come from the quadratic/quartic part of $g(c)$, respectively.
\end{remark}

\begin{remark}
	The error estimate given in \eqref{eq:the error estimate} suggests using $\kappa_2$ scaled by $\gamma$, in particular because $\gamma$ is small in applications.
	This explains the choice in Thm.~\ref{thm:aphyp}.
\end{remark}

\begin{proof}
    We define a strong solution $\bar{\ww}$ to \eqref{eq:relaxedCHdiffusiveRes} with residuals based on $c$ as defined in Proposition~\ref{prop:relaxed approximate solution from exact CH}.
	Then it is clear that the left-hand side of inequality \eqref{eq:the error estimate} can be bounded by the square root of the relative energy between $\ww$ and $\bar{\ww}$ plus terms that are of order $O(\kappa_1+\kappa_2+\kappa_3)$.
	It is thus enough to prove an upper bound for the relative energy between $\ww$ and $\bar{\ww}$ that yields the scaling as stated on the right-hand side of \eqref{eq:the error estimate}.

    Since, by construction, $\mathcal{R}_1-\mathcal{R}_5=\mathcal{R}_3=0$, in the context of Proposition~\ref{prop:relative energy time evolution} we obtain
    \begin{equation}\label{eq:mathcalR}
        \mathcal{R}=~12\left|\mathcal{R}_0+\int_{0}^{t}\mathcal{R}_4-\nabla\mathcal{R}_5~\mathrm{d}s\right|^2-\mathcal{R}_1\left(1+g''(c)\right)(\cc-c)-\mathcal{R}_2\cdot\left(\frac{\mathbf{q}}{\kappa_3}-\frac{\bar{\mathbf{q}}}{\kappa_3}\right)-\gamma\mathcal{R}_4(\mathbf{p}-\bar{\mathbf{p}}).
    \end{equation}
    We now bound each term on the right-hand side of \eqref{eq:mathcalR} separately.

    First, note that
    \begin{align*}
        \mathcal{R}_0=&~\left(\frac{1}{\gamma}\left[\kappa_2\partial_t\nabla\mu(c)+\kappa_1\kappa_2\partial_t\left(2\nabla c+\gamma\partial_t\nabla c\right)\right]-\kappa_1\gamma\nabla\Delta c\right)\Bigg|_{s=0}\\
        \int_0^t\mathcal{R}_4-\nabla\mathcal{R}_5~\mathrm{d}s =&~\left[\frac{1}{\gamma}\left[\kappa_2\partial_t\nabla\mu(c)+\kappa_1\kappa_2\partial_t\left(2\nabla c+\gamma\partial_t\nabla c\right)\right]-\kappa_1\gamma \nabla\Delta c\right]^{s=t}_{s=0}.
    \end{align*}
    Thus,
    \begin{align}
        &~12\left|\mathcal{R}_0(t)+\int_{0}^{t}\mathcal{R}_4-\nabla\mathcal{R}_5~\mathrm{d}s\right|^2\nonumber\\
        \leq&~ C_0\left( \frac{\kappa_2^2}{\gamma^2}\left|\partial_t\nabla\mu(c)\right|^2+\frac{\kappa_1^2\kappa_2^2}{\gamma^2}\left(\left|\partial_t\nabla c\right|^2+\left|\gamma\partial_t\partial_t\nabla c\right|^2\right)+\kappa_1^2\gamma^2\left|\nabla\Delta c\right|^2\right)(t),
    \end{align}
    for $C_0>1$ large enough, by repeated application of $(a+b)^2\leq 2a^2+2b^2$ for $a,b\in\mathbb{R}$.

    On the other hand,
    \begin{align}
        \mathcal{R}_1\left(1+g''(c)\right)(\cc-c)\leq \frac{\kappa_1^2}{2}\left|3c^2\Delta c\right|^2+\frac{1}{2}(\cc-c)^2,
    \end{align}
    while
    \begin{align}
        \mathcal{R}_2\cdot\left(\frac{\mathbf{q}}{\kappa_3}-\frac{\bar{\mathbf{q}}}{\kappa_3}\right)\leq \kappa_3^2\left|\partial_t\nabla\mu(c)\right|^2+\frac{1}{4}\left|\frac{\mathbf{q}}{\kappa_3}-\frac{\bar{\mathbf{q}}}{\kappa_3}\right|^2,
    \end{align}
    and
    \begin{align}
        \gamma\mathcal{R}_4(\mathbf{p}-\bar{\mathbf{p}})\leq & ~k\left[\frac{\kappa_2^2}{\gamma}\left|\partial_t\partial_t\nabla\mu(c)\right|^2+\frac{\kappa_1^2\kappa_2^2}{\gamma}\left(\left|\partial_t\partial_t\nabla c\right|^2+\left|\gamma\partial_t\partial_t\partial_t\nabla c\right|^2\right)+\kappa_1^2\gamma\left(\left|\nabla\Delta c\right|^2+\left|\gamma\partial_t\nabla\Delta c\right|^2\right)\right]\nonumber\\
        &+\gamma\left|\mathbf{p}-\bar{\mathbf{p}}\right|^2,
    \end{align}
    for $k>1$ large enough, by Young's inequality.
    Thus, an application of Proposition~\ref{prop:relative energy time evolution} together with Gronwall's Lemma now yields the desired estimate.
\end{proof}

\begin{remark}
    From \eqref{eq:the error estimate}, it is evident that the relaxation provides approximations to $\nabla c, \Delta c, \nabla\mu(c), \Delta\mu(c)$ as
    \begin{align}
        \left\|\mathbf{p}-\nabla c\right\|_{L^\infty L^2}\leq&~ C_0 \left(\kappa_1+\kappa_2+\kappa_3\right),\\
        \left\|\frac{\cc-\varphi}{\kappa_1}+\gamma\Delta c\right\|_{L^\infty L^2} \leq&~ C_0 \left(\sqrt{\kappa_1}+\frac{\kappa_2}{\sqrt{\kappa_1}}+\frac{\kappa_3}{\sqrt{\kappa_1}}\right),\\
        \left\|\frac{\mathbf{q}}{\kappa_3}+\nabla\mu(c)\right\|_{L^\infty L^2} \leq&~C_0\left(\frac{\kappa_1}{\sqrt{\kappa_3}}+\frac{\kappa_2}{\sqrt{\kappa_3}}+\sqrt{\kappa_3}\right),\\
        \left\|\frac{w}{\kappa_2}-\Delta\mu(c)\right\|_{L^\infty L^2} \leq&~ C_0\left(\frac{\kappa_1}{\sqrt{\kappa_2}}+\sqrt{\kappa_2}+\frac{\kappa_3}{\sqrt{\kappa_2}}\right),
    \end{align}
    with $C_0=C_0(C,K,\gamma,T)>0$.
\end{remark}

\section{Spatial semidiscretizations}
\label{sec:spatial}
In this work, we restrict ourselves to periodic boundary conditions, and hence use periodic upwind first-derivative summation-by-parts (SBP) operators as building blocks for the spatial discretization; we follow the approaches of \cite{Mattsson2017,RanochaConservative2021}, which were similarly used in \cite{BiswasEtAl2025}.
To this end, we define $\Nx$ as the number of points when using SBP finite-difference operators, and the number of elements when using SBP discontinuous Galerkin operators, respectively.
The number of degrees of freedom is then defined as $\ndof$, which is $\ndof = \Nx$ for finite differences, and $\ndof = \Nx(p+1)$ for discontinuous Galerkin operators, where $p$ denotes the polynomial degree of the ansatz functions.
A periodic upwind SBP operator then consists of a grid $\xx \in \R^{\ndof}$, operators $D_+$ and $D_- \in \R^{\ndof\times\ndof}$ that are consistent with the first-order derivative, and a symmetric positive definite mass matrix $M$ such that
\begin{align}
 \label{eq:sbpproperty}
 MD_+ = - D_-^TM.
\end{align}
In this work, we take $M$ as a diagonal matrix. For ease of presentation, in this section, we will restrict ourselves to the one-dimensional case.
In multiple dimensions on tensor-product grids, the extension is straightforward; it is outlined in Sec.~\ref{subsec:2DRes}.

Using the upwind SBP operators $D_+$ and $D_-$ for a discretization of the Cahn-Hilliard equation \eqref{eq:CH} in such a way that discrete energy decay can be proven is possible in two ways.
Both ways use a discretization $D_2$ of the second derivative to arrive at the discrete formulation\footnote{Please note that we abuse our notation a bit, as $c$ denotes both the discrete and the continuous solution. We do not expect any confusion, and keep the notation for a cleaner formulation.}
\begin{align}
 \label{eq:CH_h}
 \partial_t c &= D_2 \left(g'(c) - \gamma D_2 c\right).
\end{align}
$D_2$ can be defined as either $D_2 = D_+ D_-$ or, in the opposite order, as $D_2 = D_- D_+$. For the sake of a unified notation, we define the quantities $D_{\circ}$ and $D_{\diamond}$, where $\circ$ denotes either plus or minus, and $\diamond$ denotes the opposite sign. Then, $D_2$ can be written as $D_2 = D_{\circ}D_{\diamond}$ for both cases.
\begin{remark}
 If the SBP operator stems from a finite-difference scheme, both definitions of $D_2$ coincide, as finite-difference matrices on periodic grids are circulant, and therefore always commute.
 For DG discretizations, this is not necessarily true, and both definitions will lead to different schemes.
\end{remark}

\begin{lemma}\label{la:masspreserving}
    The discretization \eqref{eq:CH_h} is mass-conservative, i.e., the quantity $1^T M c$ is constant in time, where $1$ denotes the vector whose entries are all equal to unity.
\end{lemma}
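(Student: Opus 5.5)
We differentiate the discrete mass $1^T M c$ in time and show the derivative vanishes. By \eqref{eq:CH_h} and the fact that $M$ is constant in time,
\begin{align*}
  \frac{\mathrm d}{\mathrm dt} 1^T M c = 1^T M D_2 \mu_h, \qquad \mu_h := g'(c) - \gamma D_2 c,
\end{align*}
so it suffices to prove $1^T M D_2 = 0$, i.e., $1^T M D_\circ D_\diamond = 0$ for both choices of $\circ \in \{+,-\}$. Since $\mu_h$ is an arbitrary vector from this viewpoint, we do not need any property of $g$.

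The key step uses the SBP property \eqref{eq:sbpproperty} together with consistency of the upwind operators. Both $D_+$ and $D_-$ are consistent first-derivative operators, so they are exact on constants: $D_+ 1 = D_- 1 = 0$. Taking the transpose of \eqref{eq:sbpproperty} gives $D_+^T M = -M D_-$, since $M$ is symmetric. Hence, regardless of which sign $\circ$ denotes, $1^T M D_\circ = -(D_\diamond 1)^T M = 0$. Concretely, $1^T M D_+ = -1^T D_-^T M = -(D_- 1)^T M = 0$, and analogously $1^T M D_- = -(D_+ 1)^T M = 0$.

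Consequently $1^T M D_\circ D_\diamond \mu_h = 0$, and the mass is constant. The only point requiring care is justifying $D_\pm 1 = 0$ for periodic operators. This is the standard zeroth-order consistency (exactness for constants) that is part of the definition of a consistent periodic upwind SBP operator. For DG operators it holds elementwise with upwind numerical fluxes of a constant state, and for periodic FD stencils the coefficients sum to zero. We state it as part of the assumed consistency rather than proving it.
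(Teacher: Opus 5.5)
Your proof is correct and is the paper's argument. The paper likewise uses the SBP relation \eqref{eq:sbpproperty} to write $1^T M D_\circ D_\diamond \mu = -(D_\diamond 1)^T M D_\diamond \mu$ and then invokes consistency, $D_\diamond 1 = 0$; you simply isolate the identity $1^T M D_\circ = -(D_\diamond 1)^T M = 0$ first, which is the same step.
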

\begin{proof}
    From \eqref{eq:CH_h} and $\mu := g'(c) - \gamma D_2 c$, there holds
    \begin{align*}
      \partial_t (1^T M c) = 1^T M D_2 \mu = 1^T M D_{\circ}D_{\diamond} \mu
      \stackrel{\eqref{eq:sbpproperty}}= - (D_{\diamond} 1)^T M D_{\diamond} \mu = 0.
    \end{align*}
    $D_{\diamond} 1$ is zero as $D_{\diamond}$ is a consistent first-derivative operator.
\end{proof}

\begin{lemma}\label{la:energydecaych}
  A discrete version of the Cahn-Hilliard energy $\E$, see \eqref{eq:E}, is given by
  \begin{align*}
    \E_{d}(c) = 1^T M g(c) - \frac{\gamma} 2 c^T M D_2 c,
  \end{align*}
  where $1$ denotes the vector whose entries are all equal to unity.
  This energy is dissipative, i.e., for the solution $c$ to \eqref{eq:CH_h},
  \begin{align*}
    \frac{\mathrm d}{\mathrm d t} \E_{d}(c)  \leq 0.
  \end{align*}
\end{lemma}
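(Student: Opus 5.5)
We differentiate $\E_d(c)$ in time, insert the semidiscrete equation \eqref{eq:CH_h}, and use the SBP property \eqref{eq:sbpproperty} to identify the result as minus a squared norm. This mirrors the continuous computation, where integration by parts yields $-\int|\nabla\mu|^2$.

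\textbf{Step 1: the energy is a quadratic form in a symmetric matrix.} First we check that the matrix $MD_2$ is symmetric, so that the time derivative of the quadratic part takes a simple form. By \eqref{eq:sbpproperty} we have $MD_{\circ} = -D_{\diamond}^T M$. Hence
\begin{align*}
 M D_2 = M D_{\circ} D_{\diamond} = -D_{\diamond}^T M D_{\diamond},
\end{align*}
which is symmetric because $M$ is symmetric, and negative semidefinite because $M$ is positive definite. In particular $-\frac{\gamma}{2}c^TMD_2c = \frac{\gamma}{2}(D_\diamond c)^TM(D_\diamond c)\ge 0$, which is the discrete analogue of $\frac\gamma2\int|\nabla c|^2$.

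\textbf{Step 2: the time derivative of the energy.} Since $M$ is diagonal, $1^TMg(c)=\sum_i M_{ii}g(c_i)$, and its time derivative is $g'(c)^TM\partial_t c$. Using the symmetry of $MD_2$ from Step 1, the time derivative of the quadratic part is $-\gamma (D_2c)^T M\partial_t c$. Adding the two pieces gives
\begin{align*}
 \frac{\mathrm d}{\mathrm dt}\E_d(c) = \left(g'(c)-\gamma D_2 c\right)^T M \partial_t c = \mu^T M D_2 \mu,
\end{align*}
where $\mu := g'(c)-\gamma D_2 c$ is the discrete chemical potential and we inserted \eqref{eq:CH_h} for $\partial_t c$.

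\textbf{Step 3: sign of the derivative.} Applying Step 1 once more, now to the vector $\mu$,
\begin{align*}
 \mu^TMD_2\mu = -(D_\diamond\mu)^TM(D_\diamond\mu) \le 0,
\end{align*}
since $M$ is positive definite. This is the discrete counterpart of $-\int|\nabla\mu|^2$ and proves $\frac{\mathrm d}{\mathrm dt}\E_d(c)\le 0$.

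The only point needing care is Step 2: the derivative of the potential term is cleanly $g'(c)^T M\partial_t c$ only because $M$ is diagonal, so that $g$ acts pointwise on the nodal values and the derivative of $1^TMg(c)$ is exactly the $M$-weighted inner product of $g'(c)$ with $\partial_t c$. The remaining steps follow directly from \eqref{eq:sbpproperty}.
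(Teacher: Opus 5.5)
Your proof is correct and follows essentially the same route as the paper. Both use the SBP identity $MD_2 = -D_{\diamond}^T M D_{\diamond}$ to get the symmetry of $MD_2$, reduce the energy rate to $\mu^T M D_2 \mu$, and conclude from $\mu^T M D_2\mu = -(D_{\diamond}\mu)^T M (D_{\diamond}\mu) \le 0$. Your remark that the diagonal structure of $M$ is what makes the derivative of $1^T M g(c)$ equal to $g'(c)^T M \partial_t c$ is a correct detail that the paper uses only implicitly.
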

\begin{proof}
 Before going into the proof, please note that $D_2^T M = M D_2$, which can easily be proven from the SBP properties. Then, one can compute in a straightforward way that
 \begin{align*}
  \frac{\mathrm d}{\mathrm d t} \E_{d}(c) = \left(g'(c)^T M - \gamma c^T M D_2\right) c_t
  &= \left(g'(c)^T M - \gamma c^T M D_2\right) D_2 \left(g'(c) - \gamma D_2 c\right) \\
  &= \left(M g'(c) - \gamma D_2^T M c\right)^T D_2 \left(g'(c) - \gamma D_2 c\right) \\
  &= \left(g'(c) - \gamma D_2 c\right)^T M D_2 \left(g'(c) - \gamma D_2 c\right) \leq 0.
 \end{align*}
 The last inequality is true since, for all vectors $y \in \R^{\ndof}$,
 \begin{align*}
  y^T M D_2 y = y^T M D_{\circ} D_{\diamond} y = - y^T D_{\diamond}^T M D_{\diamond} y = -(D_{\diamond}y)^T M (D_{\diamond} y) \leq 0,
 \end{align*}
 which is true since $M$ is positive definite.
\end{proof}
\begin{remark}
 $\E_d$ is, in fact, consistent with $\E$, as there holds
 \begin{align*}
  \E(c) = \int_{\Omega} \left( g(c) + \frac{\gamma}{2} |\nabla c|^2 \right) \mathrm{d}x &\approx 1^T M g(c) + \frac{\gamma}{2} (D_{\diamond} c)^T M D_{\diamond} c = 1^T M g(c) + \frac \gamma 2 c^T D_{\diamond}^T M D_{\diamond} c \\
  &= 1^T M g(c) - \frac \gamma 2 c^T M D_{\circ} D_{\diamond} c = 1^T M g(c) - \frac \gamma 2 c^T M D_2 c.
 \end{align*}
\end{remark}

Similarly to the Cahn-Hilliard equation, we define a discretization of \eqref{eq:CHHExplicit} using upwind SBP operators, which results in
\begin{subequations}\label{eq:CHHDiscExplicit}
\begin{alignat}{2}
  \label{eq:CHHDE1}  \cc_t     &+ D_{\circ} \left(\frac \qq {\eps^{k_3}} \right)              &\ = \ & 0, \\
  \label{eq:CHHDE2}  \qq_t   &+ D_{\diamond}\left(g'(\cc) + \frac{\cc-\varphi}{\eps}\right)           &\ = \ & -\frac \qq {\eps^{k_3}}, \\
  \label{eq:CHHDE3}  w_t     &- D_{\circ} (\gamma \pp)                                &\ = \ & \frac{\cc - \varphi}{\eps}, \\
  \label{eq:CHHDE4}  \pp_t   &-\frac {D_{\diamond} w} {\gamma \eps^{k_2}} &\ = \ & 0, \\
  \label{eq:CHHDE5}  \varphi_t &                                                         &\ = \ & \frac w {\gamma \eps^{k_2}}.
\end{alignat}
\end{subequations}
Again, we define this on one-dimensional domains for ease of presentation.
The $\eps$-dependence of the parameters $\kappa_1, \kappa_2, \kappa_3$ has been set explicitly according to Thm.~\ref{thm:aphyp}.
As in Thm.~\ref{thm:aphyp}, given the parameter choice in the theorem, we can show that this discretization converges to the discretization of the Cahn-Hilliard equation in \eqref{eq:CH_h} with $D_2 = D_{\circ}D_{\diamond}$. We will not show this here, but show it later in a broader context once time has been discretized.
\begin{remark}
In a similar way as before, see Lemma~\ref{la:masspreserving}, we can prove that this discretization is mass-preserving, i.e., that there holds $\partial_t(1^T M \cc) = 0$.
\end{remark}

For \eqref{eq:CHHDiscExplicit}, there also exists a discrete version of the energy $\Eh$, see Eq.~\eqref{eq:Eh}, given by
\begin{align}
  \label{eq:Ehd}
  \Ehd(\ww) = 1^T M g(\cc) + \frac \gamma 2 \|\pp\|_M^2 + \frac{\|\cc-\varphi\|_M^2}{2\eps} + \frac {\|w\|_M^2} {2 \gamma \eps^{k_2}}  + \frac 1 {2 \eps^{k_3}} \|\qq\|_M^2.
\end{align}
Here, we have defined
\begin{align}
    \label{eq:mnorm}
    \|w\|_M^2 := w^T M w.
\end{align}
\begin{lemma}\label{la:energydecaychh}
 Let the $\kappa$-parameters be chosen as in Thm.~\ref{thm:aphyp}. Then, the energy $\Ehd$ is dissipative, i.e., for a solution $\ww$ to \eqref{eq:CHHDiscExplicit}, there holds
 \begin{align*}
  \frac{\mathrm d}{\mathrm d t} \Ehd(\ww)  \leq 0.
 \end{align*}
\end{lemma}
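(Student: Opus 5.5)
We differentiate $\Ehd$ along solutions of \eqref{eq:CHHDiscExplicit} and show that every cross term cancels. The only tool needed is the SBP property \eqref{eq:sbpproperty}, written as $a^T M D_{\circ} b = -(D_{\diamond} a)^T M b$ for all vectors $a,b$ (and likewise with $\circ$ and $\diamond$ interchanged). The chain rule gives
\begin{align*}
 \frac{\mathrm d}{\mathrm d t}\Ehd(\ww) = g'(\cc)^T M \cc_t + \gamma \pp^T M \pp_t + \frac{1}{\eps}(\cc-\varphi)^T M(\cc_t-\varphi_t) + \frac{1}{\gamma\eps^{k_2}} w^T M w_t + \frac{1}{\eps^{k_3}} \qq^T M \qq_t .
\end{align*}
We substitute the right-hand sides of \eqref{eq:CHHDE1}--\eqref{eq:CHHDE5} and group the resulting terms into three blocks.

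\emph{Block 1 (the $(\cc,\qq)$ coupling).} Write $\mu_h := g'(\cc) + (\cc-\varphi)/\eps$. The $\cc_t$ contributions are $-\mu_h^T M D_{\circ}(\qq/\eps^{k_3})$. The $\qq_t$ contributions are $-\eps^{-k_3}\qq^T M D_{\diamond}\mu_h - \eps^{-2k_3}\|\qq\|_M^2$. By the SBP identity,
\begin{align*}
 \mu_h^T M D_{\circ}\qq = -(D_{\diamond}\mu_h)^T M \qq = -\qq^T M D_{\diamond}\mu_h ,
\end{align*}
using that $M$ is symmetric. Hence the two transport terms cancel exactly, and only $-\eps^{-2k_3}\|\qq\|_M^2 \le 0$ remains.

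\emph{Block 2 (the $(w,\pp)$ coupling).} From the $w_t$ equation we get $\frac{1}{\gamma\eps^{k_2}} w^T M\bigl(\gamma D_{\circ}\pp + (\cc-\varphi)/\eps\bigr)$. From the $\pp_t$ equation we get $\gamma \pp^T M D_{\diamond} w/(\gamma\eps^{k_2})$. The SBP identity gives $w^T M D_{\circ}\pp = -\pp^T M D_{\diamond} w$, so these two terms cancel.

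\emph{Block 3 (the relaxation terms).} The $\varphi_t$ term is $-\frac{1}{\eps}(\cc-\varphi)^T M w/(\gamma\eps^{k_2})$. It cancels the leftover $\frac{1}{\gamma\eps^{k_2+1}} w^T M(\cc-\varphi)$ from Block 2, again by symmetry of $M$.

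Collecting the three blocks gives $\frac{\mathrm d}{\mathrm d t}\Ehd = -\eps^{-2k_3}\|\qq\|_M^2 \le 0$.

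The main obstacle is bookkeeping rather than analysis. The operator placement in \eqref{eq:CHHDiscExplicit} must be exactly dual: $D_{\circ}$ acts on $\qq$ and $\pp$, while $D_{\diamond}$ acts on $\mu_h$ and $w$. This duality is what makes each pair of transport terms an SBP-adjoint pair. It also matters that the $(\cc-\varphi)/\eps$ term is included inside $\mu_h$, so that the $\cc_t$ contribution coming from the relaxation energy term is absorbed in Block 1. We would verify each cancellation term by term, with attention to signs and to the factors $\gamma$, $\eps^{k_2}$ and $\eps^{k_3}$.
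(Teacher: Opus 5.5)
Your proposal is correct and is essentially the paper's proof. You differentiate $\Ehd$, substitute \eqref{eq:CHHDiscExplicit}, cancel the two transport pairs via the SBP property $MD_+ = -D_-^TM$ and the relaxation pair via symmetry of $M$, leaving $-\eps^{-2k_3}\|\qq\|_M^2 \le 0$; your block-wise grouping is only a tidier bookkeeping of the paper's chain of equalities, which, like yours, tacitly uses that $M$ is diagonal so that $\frac{\mathrm d}{\mathrm dt} 1^T M g(\cc) = g'(\cc)^T M \cc_t$.
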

\begin{proof}
 We can compute in a straightforward way that
 \begin{align*}
    \frac{\mathrm d}{\mathrm d t} \Ehd(\ww)
    &= g'(\cc)^T M \cc_t + \frac 1 \eps (\cc - \varphi)^T M \cc_t + \frac 1 {\eps^{k_3}} \qq^T M \qq_t + \frac 1 {\gamma \eps^{k_2}} w^T M w_t + \gamma \pp^T M \pp_t  + \frac 1 \eps (\varphi-\cc)^T M \varphi_t \\
    &= -g'(\cc)^T M D_{\circ}\left(\frac{\qq}{\eps^{k_3}}\right) - \frac 1 \eps (\cc - \varphi)^T M D_{\circ}\left(\frac{\qq}{\eps^{k_3}}\right) -
        \frac 1 {\eps^{k_3}} \qq^T M \left(\frac \qq {\eps^{k_3}} + D_{\diamond}\left(g'(\cc) + \frac{\cc-\varphi}{\eps}\right) \right) \\
        &\hphantom{= \ \ } + \frac 1 {\gamma \eps^{k_2}} w^T M \left(\frac{\cc - \varphi}{\eps} + D_{\circ} (\gamma \pp) \right) +
        \gamma \pp^T M \left(\frac {D_{\diamond} w} {\gamma \eps^{k_2}} \right)+ \frac 1 \eps (\varphi-\cc)^T M \left(\frac w {\gamma \eps^{k_2}} \right) \\
    &= -g'(\cc)^T M D_{\circ}\left(\frac{\qq}{\eps^{k_3}}\right) - \frac 1 \eps (\cc - \varphi)^T M D_{\circ}\left(\frac{\qq}{\eps^{k_3}}\right) -
        \left( \frac {\qq^T} {\eps^{k_3}}\right) M D_{\diamond}\left(g'(\cc) + \frac{\cc-\varphi}{\eps}\right) \\
        &\hphantom{= \ \ } + \frac 1 {\eps^{k_2}} w^T M \left(D_{\circ} \pp \right) +
        \pp^T M \left(\frac {D_{\diamond} w} {\eps^{k_2}} \right) - \frac 1 {\eps^{2k_3}} \|\qq\|_M^2 \\
    &= -g'(\cc)^T M D_{\circ}\left(\frac{\qq}{\eps^{k_3}}\right) - \frac 1 \eps (\cc - \varphi)^T M D_{\circ}\left(\frac{\qq}{\eps^{k_3}}\right) +
    \left( M D_{\circ} \frac {\qq} {\eps^{k_3}}\right)^T \left(g'(\cc) + \frac{\cc-\varphi}{\eps}\right) \\
        &\hphantom{= \ \ } + \frac 1 {\eps^{k_2}} w^T M \left(D_{\circ} \pp \right) -
        \left(D_{\circ}  \pp\right)^T M \left(\frac {w} {\eps^{k_2}} \right) - \frac 1 {\eps^{2k_3}} \|\qq\|_M^2 \\
    &= - \frac 1 {\eps^{2k_3}} \|\qq\|_M^2   \leq 0,
 \end{align*}
 which is obviously the desired result.
\end{proof}
\begin{remark}
	Note that from \eqref{eq:qk3}, we obtain that for the continuous solution to \eqref{eq:CHHExplicit}, there holds
	\begin{align*}
		- \frac 1 {\eps^{2k_3}} \qq^T \qq =
		- \left(\nabla \left(g'(\cc_0) -\gamma \Delta \cc_0\right)\right)^T
		\left(\nabla \left(g'(\cc_0) -\gamma \Delta \cc_0\right)\right) + \O(\eps),
	\end{align*}
	which is in line with the estimate for the energy decay of the discrete Cahn-Hilliard equation, see Lemma~\ref{la:energydecaych}.
	In particular, the energy estimate does not blow up with $\eps \rightarrow 0$.
\end{remark}

\section{Time discretization}
\label{sec:time}
In this section, we discretize \eqref{eq:CHHDiscExplicit} in time using a globally-stiffly-accurate ARS-type IMEX Runge-Kutta \cite{Ascher1997} scheme.
Since the seminal work of Eyre \cite{1998Eyre}, see also \cite{christlieb2013unconditionally}, it is known that, with respect to energy stability, it is beneficial to split the double-well potential $g$ into a convex and a concave part, and treat the latter explicitly and the former implicitly.
If the remaining terms of the equation are treated implicitly, it can be shown that for an ARS-111 discretization, this leads to a fully discrete energy-stable scheme, see \cite{1998Eyre}.
Also, for higher-order Runge-Kutta schemes, our experience shows that, e.g., the convergence of algebraic solvers is improved when using this splitting.
Recently, however, there has been some criticism of this splitting, at least in the context of the Allen-Cahn equation, see \cite{2026Dondl}. Nevertheless, the splitting is frequently used, so for the sake of an easier comparison, we also build our time integration based on Eyre's splitting.

In order to fix notation, define
\begin{align}
	\label{eq:splittingdoublewell}
	g(c) = g_{\cup}(c) + g_{\cap}(c),
\end{align}
where $g_{\cup}$ denotes the convex part and $g_{\cap}$ denotes the concave part. For the double-well potential \eqref{eq:doublewell}, a straightforward decomposition is
\begin{align}
    \label{eq:splittingdoublewellformula}
	g_{\cup}(c) = \frac {c^4+1} 4, \quad g_{\cap}(c) = -\frac {c^2} 2.
\end{align}

A type~II IMEX Runge-Kutta scheme for an additive ODE $u'(t) = F_E(u) + F_I(u)$ is given by
\begin{align*}
   \uu = u^n + \dt \left(\widetilde \alpha F_E(u^n) + \widehat {\widetilde A} F_E(\uu) + \alpha F_I(u^n) + \widehat A F_I(\uu)  \right),
\end{align*}
where $\uu$ denotes the collection of all the stages, beginning from the second stage, i.e.,
\begin{align}
	\label{eq:collectionstages}
	\uu = \begin{pmatrix}
		u^{(2)} \\ \vdots \\ u^{(s)}
	\end{pmatrix}.
\end{align}
The first stage is, by construction, given by $u^{(1)} = u^n$.
For the Butcher tableau of a type~II scheme, see Tbl.~\ref{tbl:typeii}.
Type~II schemes assume that the matrix $\widehat A$ is invertible. The update step is then given by
\begin{align*}
	u^{n+1} = u^n + \dt \left(\widetilde \beta F_E(u^n) + \widehat{\widetilde b^T} F_E(\uu) + \beta F_I(u^n) + \widehat b^T F_I(\uu) \right).
\end{align*}
Our theorems will rely on a globally-stiffly-accurate (GSA) structure, i.e., it is assumed that the Butcher tableau of the Runge-Kutta scheme is such that
\begin{align*}
	u^{n+1} = u^{(s)}.
\end{align*}

If we apply such an IMEX Runge-Kutta scheme to \eqref{eq:CH_h}, we obtain a fully discrete approximation scheme for the Cahn-Hilliard equation \eqref{eq:CH}, given by
\begin{align}
    \label{eq:CH_ht}
    \vec{c} = c^n + \dt D_2 \left(
    \tilde \alpha g'_{\cap} (c^n) +
    \widehat {\widetilde A} g'_{\cap}(\vec{c}) +
    \alpha \left(g'_{\cup} (c^n) - \gamma D_2 c^n\right) +
    \widehat A \left(g'_{\cup} (\vec c) - \gamma D_2 \vec c\right)
    \right).
\end{align}

\begin{remark}
	Type~I schemes can be written as type~II schemes with $\alpha = 0 = \tilde \alpha$ and $\tilde \beta = 0 = \beta$, so all the statements in this section apply to type~I schemes as well.
\end{remark}

\begin{table}
\begin{equation}
	\renewcommand\arraystretch{1.3}
	\begin{array}{c|ccc}
		0 & 0 \\
		\widehat{\widetilde{c}} & \widetilde{\alpha} & \widehat{\widetilde{A}} \\
		\hline
		& \widetilde{\beta} & \widehat{\widetilde{b}}^T
	\end{array}
	\qquad
	\begin{array}{c|ccc}
		0 & 0 \\
		\mathfrak{c} & \mathfrak{\alpha} & \widehat{A} \\
		\hline
		& \beta & \widehat{b}^T
	\end{array}
    \end{equation}
    \begin{equation}
    \begin{array}{c|ccc}
       0 & 0 & 0 & 0 \\
       \gamma & \gamma & 0 & 0 \\
       1 & \delta & 1-\delta & 0 \\ \hline
       & \delta & 1-\delta & 0
    \end{array} \quad
    \begin{array}{c|ccc}
       0 & 0 & 0 & 0\\
       \gamma & 0 & \gamma & 0 \\
       1 & 0 & 1-\gamma & \gamma \\ \hline
       & 0 & 1 -\gamma & \gamma
    \end{array}
    \hspace{2cm}
    \begin{array}{c|ccccc}
       0 & 0 & 0 & 0 & 0 \\[0.2em]
       \frac 1 2& \frac 1 2 & 0 & 0 & 0 \\[0.2em]
       \frac 2 3& \frac{11}{18} & \frac 1 {18} & 0 & 0 & 0 \\[0.2em]
       \frac 1 2& \frac 5 6 & - \frac 5 6 & \frac 1 2 & 0 & 0 \\[0.2em]
       1 & \frac 1 4 & \frac 7 4 & \frac 3 4 & -\frac 7 4 & 0 \\[0.2em] \hline
       & \frac 1 4 & \frac 7 4 & \frac 3 4 & -\frac 7 4 & 0
    \end{array} \quad
    \begin{array}{c|ccccc}
       0 & 0 & 0 & 0 & 0 & 0\\[0.2em]
       \frac 1 2 & 0 & \frac 1 2 & 0 & 0 & 0 \\[0.2em]
       \frac 2 3& 0 & \frac 1 6 & \frac 1 2 & 0 & 0 \\[0.2em]
       \frac 1 2 & 0 & -\frac 1 2 & \frac 1 2 & \frac 1 2 & 0 \\[0.2em]
       1 & 0 & \frac 3 2 & -\frac 3 2 & \frac 1 2 & \frac 1 2 \\[0.2em] \hline
       & 0 & \frac 3 2 & -\frac 3 2 & \frac 1 2 & \frac 1 2
    \end{array}
\end{equation}
\caption{The Butcher tableau of a type~II IMEX Runge-Kutta scheme (top). If the scheme is GSA, then the update coefficients $\widetilde \beta, \widehat{\widetilde{b}}$ and $\beta, \widehat{b}$ coincide with the row of the tableau corresponding to the last stage.
In this work, we use ARS-222 and ARS-443 schemes, with tableaux given in the bottom row on the left and right, respectively. We define $\gamma = \frac{2-\sqrt{2}} 2$ and $\delta = 1 - \frac 1 {2\gamma}$ as in \cite{Ascher1997}.
}\label{tbl:typeii}
\end{table}

Applying an IMEX Runge-Kutta scheme to \eqref{eq:CHHDiscExplicit} and using the splitting of the double-well potential \eqref{eq:splittingdoublewell} results in the fully discrete scheme
\begin{subequations}\label{eq:CHHDIMEX}
\begin{align}
	\label{eq:CHHDI1}
	\vec{\cc} &= \cc^n - \Delta t \alpha D_{\circ} \left(\frac{\qq^{n}}{\eps^{k_3}}\right) - \Delta t \widehat{A} D_{\circ} \left(\frac{\vec{\qq}}{\eps^{k_3}}\right), \\
	\label{eq:CHHDI2}
	\vec{\qq} &= \qq^n - \Delta t \widetilde{\alpha} D_{\diamond} g_{\cap}'(\cc^{n}) - \Delta t \widehat{\widetilde{A}} D_{\diamond} g_{\cap}'(\vec{\cc})
	- \Delta t \alpha D_{\diamond} g_{\cup}'(\cc^{n}) - \Delta t \widehat{A} D_{\diamond} g_{\cup}'(\vec {\cc})
	\\
	\notag
	&\qquad\;
	- \Delta t \alpha D_{\diamond} \left( \frac{\cc^{n} - \varphi^{n}}{\eps}\right)
	- \Delta t \widehat{A} D_{\diamond} \left(\frac{\vec{\cc} - \vec{\varphi}}{\eps}\right)
	- \Delta t \alpha \left(\frac{\qq^{n}}{\eps^{k_3}}\right) - \Delta t \widehat{A} \left(\frac {\vec{\qq}} {\eps^{k_3}}\right), \\
	\label{eq:CHHDI3}
	\vec{w} &= w^n + \gamma \Delta t \alpha D_{\circ} \pp^{n} + \gamma \Delta t \widehat{A} D_{\circ} \vec{\pp}
	+ \Delta t \alpha \left(\frac{\cc^{n}-\varphi^{n}}{\eps}\right) + \Delta t \widehat{A} \left(\frac{\vec{\cc}-\vec{\varphi} }{\eps}\right), \\
	\label{eq:CHHDI4}
	\vec{\pp} &= \pp^n + \Delta t \alpha D_{\diamond} \left(\frac{w^{n}}{\gamma \eps^{k_2}}\right) + \Delta t \widehat{A} D_{\diamond} \left(\frac{\vec {w}}{\gamma \eps^{k_2}}\right), \\
	\label{eq:CHHDI5}
	\vec{\varphi} &= \varphi^n + \Delta t \alpha \left(\frac{w^{n}}{\gamma \eps^{k_2}}\right) + \Delta t \widehat{A} \left(\frac{\vec{w}}{\gamma \eps^{k_2}}\right).
\end{align}
\end{subequations}
Vector arrows over a quantity denote the collection of all the stages, starting from the second one, as in \eqref{eq:collectionstages}.

\begin{theorem}\label{thm:apimex}
    Let $k_2, k_3 \in \N$ in \eqref{eq:CHHDIMEX} denote positive integers, cf. Thm.~\ref{thm:aphyp}.
    We consider a globally stiffly accurate IMEX Runge-Kutta scheme of type~II with the Butcher tableau as in Tbl.~\ref{tbl:typeii} and invertible matrix $\widehat A$.
	We assume that all the unknown values in \eqref{eq:CHHDIMEX} can be written in terms of a Hilbert expansion, which means, for example, that
	\begin{align*}
		\vec{\cc} = \vec{\cc}_0 + \eps \vec{\cc}_1 + \eps^2 \vec{\cc}_2 + \cdots.
	\end{align*}
	We assume that the initial conditions are well-prepared in the sense that
	\begin{align*}
		\qq^0 = \O(\eps^{k_3}),
		\quad w^0 = \O(\eps^{k_2}),
		\quad \pp^0 = D_{\diamond} \varphi^0 + \O(\eps).
	\end{align*}
	Furthermore, if $\alpha \neq 0$\footnote{If $\alpha = 0$, this means we consider an ARS-type method \cite{Ascher1997}.}, we assume that
	\begin{align*}
		\varphi^{0} = \cc^{0} + \O(\eps).
	\end{align*}
	Then, for $\eps \rightarrow 0$, \eqref{eq:CHHDIMEX} reduces to a discretization of the Cahn-Hilliard equation in the sense that $\cc_0^n$ is the result of an IMEX Runge-Kutta discretization of \eqref{eq:CH_ht}.
\end{theorem}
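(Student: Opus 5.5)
I will mimic the proof of Thm.~\ref{thm:aphyp} at the fully discrete level: insert the Hilbert expansions into \eqref{eq:CHHDIMEX}, collect equal powers of $\eps$, and use the invertibility of $\widehat A$ to turn each stage relation into a statement about the stage vectors. I will argue by induction over time steps $n$. The inductive hypothesis is that $\qq^n=\O(\eps^{k_3})$, $w^n=\O(\eps^{k_2})$, $\pp^n = D_{\diamond}\varphi^n+\O(\eps)$, and, if $\alpha\neq 0$, $\varphi^n=\cc^n+\O(\eps)$. At $n=0$ this is the well-preparedness assumption. Because the scheme is GSA, $u^{n+1}=u^{(s)}$, so once the stage vectors have these properties they propagate to step $n+1$.

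First, from \eqref{eq:CHHDI5} at orders $\eps^{-k_2},\dots,\eps^{-1}$, and since $w^n=\O(\eps^{k_2})$ makes the $\alpha$-term regular, invertibility of $\widehat A$ gives $\vec w_0=\dots=\vec w_{k_2-1}=0$. At order $\eps^0$ we get
\begin{align*}
\vec\varphi_0=\varphi^n_0+\dt\,\alpha \frac{w^n_{k_2}}{\gamma}+\dt\,\widehat A\frac{\vec w_{k_2}}{\gamma}.
\end{align*}
Doing the same in \eqref{eq:CHHDI4} and applying $D_\diamond$ to the previous identity yields $\vec\pp_0-D_\diamond\vec\varphi_0=\pp^n_0-D_\diamond\varphi^n_0=0$. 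Next, \eqref{eq:CHHDI3} at order $\eps^{-1}$ gives $\alpha(\cc^n_0-\varphi^n_0)+\widehat A(\vec\cc_0-\vec\varphi_0)=0$. The $\alpha$-term vanishes either because $\alpha=0$ or by the hypothesis $\varphi^n_0=\cc^n_0$, so $\vec\cc_0=\vec\varphi_0$. At order $\eps^0$ in the same equation we get
\begin{align*}
\alpha(\cc_1^n-\varphi_1^n)+\widehat A(\vec\cc_1-\vec\varphi_1)=-\gamma\bigl(\alpha D_\circ \pp^n_0+\widehat A D_\circ\vec\pp_0\bigr)+\O(\text{terms with }w_0).
\end{align*}
Since $w$ vanishes to the relevant order and $\pp_0=D_\diamond\varphi_0=D_\diamond\cc_0$, this becomes $\alpha(\cc_1^n-\varphi_1^n)+\widehat A(\vec\cc_1-\vec\varphi_1)=-\gamma(\alpha D_2\cc_0^n+\widehat A D_2\vec\cc_0)$.

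In \eqref{eq:CHHDI2}, the orders $\eps^{-k_3},\dots$ give $\vec\qq_i=0$ for $i\le k_3-2$. Order $\eps^{-1}$ gives $\vec\qq_{k_3-1}=0$, where I again use $\vec\cc_0=\vec\varphi_0$ and the induction hypothesis. At order $\eps^0$ the result is
\begin{align*}
\alpha\qq^n_{k_3}+\widehat A\vec\qq_{k_3}=-\widetilde\alpha D_\diamond g'_\cap(\cc^n_0)-\widehat{\widetilde A}D_\diamond g'_\cap(\vec\cc_0)-\alpha D_\diamond g'_\cup(\cc_0^n)-\widehat A D_\diamond g'_\cup(\vec\cc_0)-D_\diamond\bigl[\alpha(\cc^n_1-\varphi^n_1)+\widehat A(\vec\cc_1-\vec\varphi_1)\bigr].
\end{align*}
The bracket was identified in the previous step, so the right-hand side is $-D_\diamond$ applied to the IMEX combination of $g'_\cap$ and of $g'_\cup-\gamma D_2$ appearing in \eqref{eq:CH_ht}. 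Finally, I insert this into the $\eps^0$ part of \eqref{eq:CHHDI1}. There the $\qq$-terms appear precisely as $\alpha\qq^n_{k_3}+\widehat A\vec\qq_{k_3}$, which is why the combined identity suffices and individual values of $\vec\qq_{k_3}$ are never needed. Using $D_\circ D_\diamond=D_2$, this reproduces \eqref{eq:CH_ht} for $\vec\cc_0$, and GSA gives $\cc^{n+1}_0=\cc^{(s)}_0$.

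The main obstacle is closing the induction. The leading-order identities above must be upgraded to the well-preparedness statements at $n+1$, and the $\alpha$-terms coming from $u^n$ must be handled consistently. Concretely, I need $\qq^{n+1}=\O(\eps^{k_3})$, $w^{n+1}=\O(\eps^{k_2})$, $\pp^{n+1}=D_\diamond\varphi^{n+1}+\O(\eps)$ and $\varphi^{n+1}=\cc^{n+1}+\O(\eps)$. All of these follow from the stage results since $u^{n+1}$ is the last stage. The delicate part is bookkeeping the orders when $\alpha\neq0$: each order-matching step must only use information about $u^n$ at orders already guaranteed by the hypothesis. This is exactly why $\varphi^0=\cc^0+\O(\eps)$ is needed in that case only. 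I expect this to require a careful check that no $\eps^{-1}$ term from $\qq^n$ or $w^n$ survives, in particular that $\qq^n_{k_3-1}=0$ is also part of the propagated hypothesis.
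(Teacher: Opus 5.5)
Your proposal is correct and follows essentially the same route as the paper: a formal induction over time steps closed by the GSA property, order matching in \eqref{eq:CHHDIMEX} with invertibility of $\widehat A$ to obtain $\vec w_k=0$, $\vec\qq_k=0$, $\vec\cc_0=\vec\varphi_0$ and $\vec\pp_0=D_\diamond\vec\varphi_0$, and then substitution of the $\eps^0$ relations from \eqref{eq:CHHDI3} and \eqref{eq:CHHDI2} into the $\eps^0$ part of \eqref{eq:CHHDI1}. You only process the equations in the reverse order, from \eqref{eq:CHHDI5} up to \eqref{eq:CHHDI1}, and the induction closes directly from these stage identities, so the bookkeeping you flag as the main obstacle is not a real difficulty.
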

\begin{proof}
	The proof is a formal induction on $n$, where it is assumed that the well-preparedness conditions hold at time-level $n$. We will show that the well-preparedness conditions also hold for time-level $n+1$ by showing that they hold for all the stages.
	Then, using the GSA property, which means that the last stage is equal to the update, well-preparedness for time-level $n+1$ follows trivially.

	A formal analysis analogous to the proof of Thm.~\ref{thm:aphyp} is applied, i.e., the Hilbert expansion is inserted into \eqref{eq:CHHDIMEX}. Eq.~\eqref{eq:CHHDI1} immediately yields the identities
	\begin{align}
			\alpha D_{\circ} \qq^{n}_k + \widehat{A} D_{\circ} \vec{\qq}_k = 0, &\qquad 0 \leq k \leq k_3-1, \\
			\label{eq:intmed5}
			\vec{\cc}_0 = \cc^n_0 - \Delta t \alpha D_{\circ} \qq^{n}_{k_3} - \Delta t \widehat{A} D_{\circ} \vec{\qq}_{k_3}. &\quad
	\end{align}
	From Eq.~\eqref{eq:CHHDI2}, we obtain
	\begin{align}
		\label{eq:intmed1}
		\alpha \qq^n_{k} + \widehat A \vec{\qq}_{k} = 0, \quad 0 \leq k \leq k_3-2, \\
		\label{eq:intmed2}
		\alpha D_{\diamond} (\cc^{n}_0 - \varphi^{n}_0) + \widehat{A} D_{\diamond} (\vec{\cc}_0 - \vec{\varphi}_0) = -\alpha \qq^{n}_{k_3-1} - \widehat{A} \vec{\qq}_{k_3-1}
	\end{align}
	and
	\begin{align}
		\label{eq:intmed3}
		\vec{\qq}_0
		&=
		\qq^{n}_0
		- \Delta t \widetilde{\alpha} D_{\diamond} g_{\cap}'(\cc^{n}_0) - \Delta t \widehat{\widetilde{A}} D_{\diamond} g_{\cap}'(\vec{\cc}_0)
		- \Delta t \alpha D_{\diamond} g_{\cup}'(\cc^{n}_0) - \Delta t \widehat{A} D_{\diamond} g_{\cup}'(\vec{\cc}_0)
		\\
		&\qquad\;
		- \Delta t \alpha D_{\diamond} (\cc^{n}_1 - \varphi^{n}_1) - \Delta t \widehat{A} D_{\diamond} (\vec{\cc}_1 - \vec{\varphi}_1)
		- \Delta t \alpha \vec{\qq}^{n}_{k_3} - \Delta t \widehat{A} \vec{\qq}_{k_3}.
	\end{align}
	Eq.~\eqref{eq:CHHDI3} gives us
	\begin{align}
		\label{eq:intmed8}
		\vec{\cc}_0 = \vec{\varphi}_0.
	\end{align}
	Please note that if $\alpha = 0$, we do not need to impose $\cc^n_0 = \varphi^n_0$ to obtain this result; otherwise, we do.
	Together with \eqref{eq:intmed1} and \eqref{eq:intmed2}, we arrive at
	\begin{align*}
		\vec{\qq}_k = 0, \qquad 0 \leq k \leq k_3-1
	\end{align*}
	as $\qq_k^n = 0$ for $0 \leq k \leq k_3-1$ by the assumption of well-prepared initial conditions. Plugging this into \eqref{eq:intmed3}, we obtain
	\begin{align}
	\label{eq:intmed4}
	 \widetilde{\alpha} D_{\diamond} g_{\cap}'(\cc^{n}_0) + \widehat{\widetilde{A}} D_{\diamond} g_{\cap}'(\vec{\cc}_0)
	+ \alpha D_{\diamond} g_{\cup}'(\cc^{n}_0) + \widehat{A} D_{\diamond} g_{\cup}'(\vec{\cc}_0)
	+ \alpha D_{\diamond} (\cc^{n}_1 - \varphi^{n}_1) + \widehat{A} D_{\diamond} (\vec{\cc}_1 - \vec{\varphi}_1) &=
	- \alpha \vec{\qq}^{n}_{k_3} - \widehat{A} \vec{\qq}_{k_3}.
\end{align}
Continuing with \eqref{eq:CHHDI3}, we arrive at
\begin{align*}
	\vec{w}_0 &= w_0^n + \gamma \Delta t \alpha D_{\circ} \pp_0^{n} + \gamma \Delta t \widehat{A} D_{\circ} \vec{\pp}_0
	+ \Delta t \alpha \left(\cc^{n}_1-\varphi^{n}_1\right) + \Delta t \widehat{A} \left({\vec{\cc}_1-\vec{\varphi}_1 }\right).
\end{align*}
Together with
\begin{align*}
	\vec{w}_k = 0, \qquad 0 \leq k \leq k_2-1
\end{align*}
obtained from \eqref{eq:CHHDI5}, this results in
\begin{align}
	\label{eq:intmed6}
	0 &= \gamma  \alpha D_{\circ} \pp_0^{n} + \gamma  \widehat{A} D_{\circ} \vec{\pp}_0
+  \alpha \left(\cc^{n}_1-\varphi^{n}_1\right) +  \widehat{A} \left({\vec{\cc}_1-\vec{\varphi}_1 }\right).
\end{align}
We obtain from \eqref{eq:CHHDI4}--\eqref{eq:CHHDI5} that
\begin{align*}
	\vec{\pp}_0 &= \pp^n_0 + \frac{\Delta t}{\gamma} \alpha D_{\diamond} {w^{n}_{k_2}} + \frac{\Delta t}{\gamma} \widehat{A} D_{\diamond} {\vec {w}_{k_2}}, \\
	\vec{\varphi}_0 &= \varphi^n_0 + \frac{\Delta t}{\gamma} \alpha {w^{n}_{k_2}} + \frac{\Delta t}{\gamma} \widehat{A} {\vec{w}_{k_2}};
\end{align*}
which can be combined into
\begin{align*}
	\vec{\pp}_0 &= \pp^n_0 + D_{\diamond}(\vec{\varphi}_0 - \varphi_0^n).
\end{align*}
Together with the well-preparedness condition $\pp_0^n = D_{\diamond}\varphi_0^n$, this last equation leads to
\begin{align}
	\label{eq:intmed7}
	\vec{\pp}_0 &= D_{\diamond}\vec{\varphi}_0.
\end{align}
What remains is a straightforward calculation starting from \eqref{eq:intmed5}:
\begin{align*}
	\vec{\cc}_0
	&= \cc^n_0 - \Delta t \alpha D_{\circ} \qq^{n}_{k_3} - \Delta t \widehat{A} D_{\circ} \vec{\qq}_{k_3} \\
	&\stackrel{\eqref{eq:intmed4}}{=} \cc^n_0 + \Delta t D_{\circ} D_{\diamond} \left(
	\widetilde{\alpha} g_{\cap}'(\cc^{n}_0) + \widehat{\widetilde{A}} g_{\cap}'(\vec{\cc}_0)
	+ \alpha g_{\cup}'(\cc^{n}_0) + \widehat{A}  g_{\cup}'(\vec{\cc}_0)
	+ \alpha (\cc^{n}_1 - \varphi^{n}_1) + \widehat{A}  (\vec{\cc}_1 - \vec{\varphi}_1)
	\right) \\
	&\stackrel{\eqref{eq:intmed6}}{=} \cc^n_0 + \Delta t D_{\circ} D_{\diamond} \left(
	\widetilde{\alpha} g_{\cap}'(\cc^{n}_0) + \widehat{\widetilde{A}} g_{\cap}'(\vec{\cc}_0)
	+ \alpha g_{\cup}'(\cc^{n}_0) + \widehat{A}  g_{\cup}'(\vec{\cc}_0)
	-
	\gamma  \alpha D_{\circ} \pp_0^{n} - \gamma  \widehat{A} D_{\circ} \vec{\pp}_0
	\right)  \\
	&=  \cc^n_0 + \Delta t D_{\circ} D_{\diamond} \left(
	\widetilde{\alpha} g_{\cap}'(\cc^{n}_0) + \widehat{\widetilde{A}} g_{\cap}'(\vec{\cc}_0)
	+ \alpha g_{\cup}'(\cc^{n}_0) + \widehat{A}  g_{\cup}'(\vec{\cc}_0)\right)
	- \gamma \dt D_{\circ}D_{\diamond} D_{\circ} \left(\alpha \pp_0^n + \widehat A \vec{\pp}_0 \right)\\
	&\stackrel{\eqref{eq:intmed7}}=  \cc^n_0 + \Delta t D_{\circ} D_{\diamond} \left(
	\widetilde{\alpha} g_{\cap}'(\cc^{n}_0) + \widehat{\widetilde{A}} g_{\cap}'(\vec{\cc}_0)
	+ \alpha g_{\cup}'(\cc^{n}_0) + \widehat{A}  g_{\cup}'(\vec{\cc}_0)\right)
	- \gamma \dt D_{\circ}D_{\diamond} D_{\circ} D_{\diamond} \left(\alpha \varphi_0^n + \widehat A \vec{\varphi}_0 \right)\\
	&\stackrel{\eqref{eq:intmed8}}=  \cc^n_0 + \Delta t D_{\circ} D_{\diamond} \left(
	\widetilde{\alpha} g_{\cap}'(\cc^{n}_0) + \widehat{\widetilde{A}} g_{\cap}'(\vec{\cc}_0)
	+ \alpha g_{\cup}'(\cc^{n}_0) + \widehat{A}  g_{\cup}'(\vec{\cc}_0)\right)
	- \gamma \dt D_{\circ}D_{\diamond} D_{\circ} D_{\diamond} \left(\alpha \cc_0^n + \widehat A \vec{\cc}_0 \right)\\
	&=  \cc^n_0 + \Delta t D_2 \left(
	\widetilde{\alpha} g_{\cap}'(\cc^{n}_0) + \widehat{\widetilde{A}} g_{\cap}'(\vec{\cc}_0)
	+ \alpha g_{\cup}'(\cc^{n}_0) + \widehat{A}  g_{\cup}'(\vec{\cc}_0)
	- \gamma D_2 \left(\alpha \cc_0^n + \widehat A \vec{\cc}_0 \right)\right),
\end{align*}
where $D_2$ has been defined as $D_2 = D_{\circ} D_{\diamond}$.
This is the IMEX Runge-Kutta scheme applied to \eqref{eq:CH_h}---compare with \eqref{eq:CH_ht}---which concludes the proof.
\end{proof}

\section{Numerical results}
\label{sec:numres}
In this section, we show numerical results of our proposed method both in one and in two dimensions. We show results for two different types of SBP discretizations:
\begin{itemize}
	\item Upwind SBP finite-difference operators of various orders as explained in the appendix of \cite{Mattsson2017}. These schemes will simply be denoted by SBP schemes.
	\item Nodal local DG schemes of various orders placed in the framework of upwind SBP schemes as explained in \cite{RanochaConservative2021}. These schemes will be denoted by DG schemes.
\end{itemize}
The numerical results have been obtained in MATLAB; the corresponding code can be found in the reproducibility repository \cite{CHH2025Reproducibility}.
We obtained the coefficients of the SBP operators from the Julia package SummationByPartsOperators.jl \cite{ranocha2021sbp}.
We used \texttt{ode15s} \cite{shampine1997matlab} to compute some reference solutions.
For all results, we compute the error at the final time $\Te$ in the $M$-norm defined in \eqref{eq:mnorm}, which is a discrete version of the $L^2$-norm.
Please note that the $M$-norm depends on the SBP scheme under consideration through the mass matrix $M$.

\subsection{One-dimensional test case}\label{sec:onedsbp}

In this section, we consider the one-dimensional Cahn-Hilliard equation \eqref{eq:CH} with initial conditions
\begin{align}
	\label{eq:chh_initial_cos}
	c(x, 0) = \cos\left(\frac{\pi x} 5 \right)
\end{align}
in the domain $\Omega = [-5, 5]$.
While the initial conditions are rather smooth, for $\gamma \rightarrow 0$, the solution will converge for large $t$ to a discontinuous step function.
It is hence obvious, and we can see this in many of our experiments, that the spatial error typically dominates the temporal error by far.
In all the numerical experiments in this section, we compute the error at the final time $\Te = 5$.
Depending on the context, we compute three different error quantities:
\begin{itemize}
    \item When considering convergence of the discrete Cahn-Hilliard equation \eqref{eq:CH_ht} to the continuous Cahn-Hilliard equation \eqref{eq:CH}, we define
    \begin{align*}
        e_{CH} := \|c(\xx, \Te) - c^N \|_M,
    \end{align*}
    where $N$ has been chosen such that $t^N = \Te$, and $c(\xx, \Te)$ denotes the solution to \eqref{eq:CH} evaluated at the final time $\Te$ and at discrete points $\xx$ given by the SBP scheme.
    \item When considering convergence of the discrete hyperbolic Cahn-Hilliard equation \eqref{eq:CHHDIMEX} towards the discrete Cahn-Hilliard equation \eqref{eq:CH_ht}, we define the two error quantities
    \begin{align*}
        e_{hyp} &:= \|\cc^N - c^N\|_M, \\
        e_{\nabla, hyp} &:= \|\pp^N - D_{\diamond}c^N\|_M.
    \end{align*}
    In both cases, solutions to \eqref{eq:CHHDIMEX} and \eqref{eq:CH_ht} have been computed with the same parameters concerning the choice of the SBP scheme, the order of accuracy, and the time integration scheme.
\end{itemize}

As we do not have an analytical solution to \eqref{eq:CH} at our disposal, we use a highly refined numerical solution as a reference solution.
More precisely, for the reference solution, we use an eighth-order SBP finite-difference scheme on $N_x = 5120$ grid points.
Time integration for obtaining this reference solution is performed using the built-in MATLAB solver \texttt{ode15s} (for small values of $\gamma$, the system becomes stiff) with a relative tolerance of $10^{-6}$.

The resulting nonlinear systems of algebraic equations are solved using a damped Newton method with a maximum of 20 Newton steps, and a relative and absolute tolerance of $10^{-12}$.
Please note that if the tolerance is not met, the algorithm will stop after 20 Newton steps and continue from there.
We chose the parameters in a way to make sure that our results are not significantly influenced by the quality of Newton's method.
The linear systems of equations are solved using MATLAB's backslash operator on the equilibrated matrix.

\paragraph{Convergence as $\eps \rightarrow 0$}
In this paragraph, we numerically analyze the influence of $\eps$ as in Thm.~\ref{thm:aphyp}.
It is obvious that with $\eps \rightarrow 0$, some eigenvalues of the hyperbolic system \eqref{eq:CHHExplicit} diverge to infinity, resulting in a very stiff system to be solved.
Hence, in practice, the eigenvalues of the hyperbolic system should be as small as possible.
This means that $\eps$ should be as large as possible, while still being small enough such that the hyperbolic system is still a decent approximation to the original equation.
It is exactly this investigation that we are pursuing in this paragraph.

\begin{figure}
    \centering
	\includegraphics[width=0.4\textwidth]{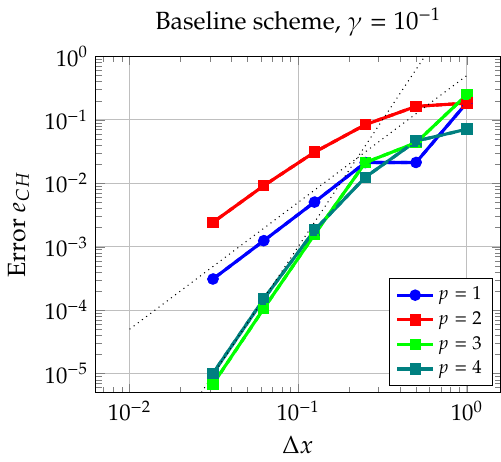}
	\includegraphics[width=0.4\textwidth]{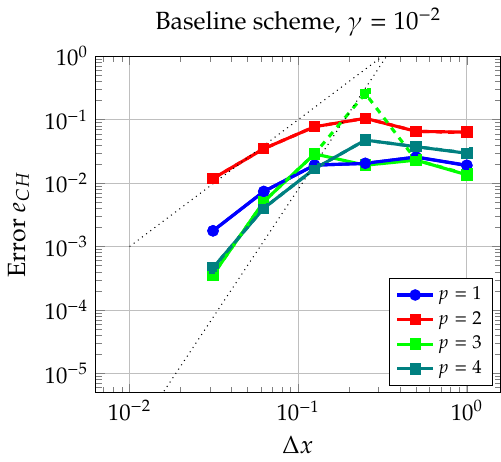}
	\caption{Convergence analysis of the original SBP scheme for ARS-222 (solid lines) and ARS-443 (dashed lines) time integration.
		The difference between ARS-222 and ARS-443 is hardly visible (only for $\gamma = 10^{-2}$ and $p=3$, there is a small outlier), as the solution is no longer in the transient regime, meaning that the solution is nearly at steady state.
		The initial condition is given by \eqref{eq:chh_initial_cos}, and we set $\Te = 5$. The number of time intervals $N_T$ is chosen to be equal to $N_x$, the spatial resolution.
        Dotted black lines indicate second and fourth order of convergence, respectively. }\label{fig:dg_err_exact}
\end{figure}

To analyze the influence of the parameters, we first report the error of the original SBP solution \eqref{eq:CH_h} with respect to the reference solution of \eqref{eq:CH} to obtain a ground truth.
Convergence results are reported in Fig.~\ref{fig:dg_err_exact} for different values of $\dx := \frac {10} {N_x}$ (the spatial resolution), $p$ (the order of accuracy of the SBP finite-difference operator), and $\gamma$.
The number of time steps $N_T$ is chosen to be equal to $N_x$. As a time integrator, the ARS-222 (solid) and the ARS-443 (dashed) schemes are chosen.
It can be seen from the figures that both schemes produce nearly the same results and that the order of convergence is the order of the spatial discretization.
It is hence safe to say that the numerical solutions' spatial error dominates the temporal error. The order of convergence is two for $p = 1$ and $p=2$, respectively, and four for $p=3$ and $p=4$.
This is expected because, by construction, the second-order finite difference operator $D_2$ will converge with an even order.
Furthermore, it can be seen that the $p=1$ case behaves better than the $p=2$ case.
Also, this is not unexpected, as, in the authors' experience, for finite differences the scheme with the same order, but smaller stencil, is typically favorable. This distinction is also visible for $p=3$ and $p=4$, but much less pronounced.

\begin{figure}
	\begin{tabular}{cc}
        \includegraphics[scale=0.7]{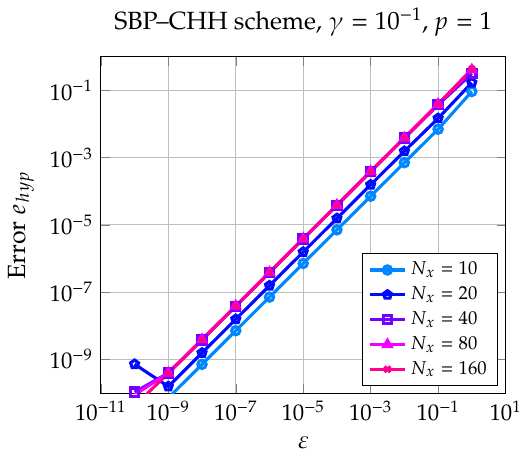} &
		\includegraphics[scale=0.7]{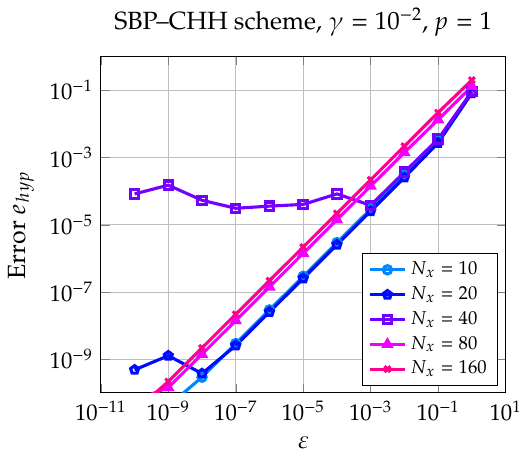} \\
        \includegraphics[scale=0.7]{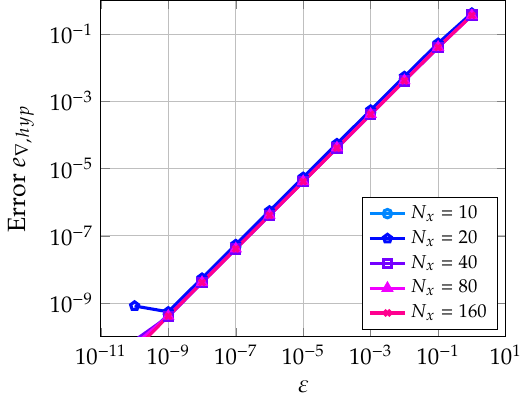} &
		\includegraphics[scale=0.7]{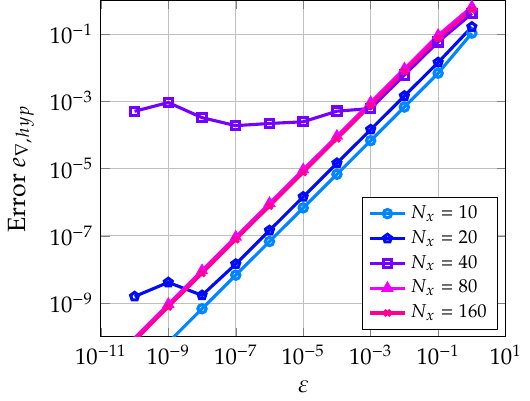} \\

        \small
		\begin{tabular}{|c|c|c|c|c|c|}
			\hline
			$N_x$ & 10 & 20 & 40 & 80 & 160 \\ \hline
			Error & 1.9e-1   &  2.1e-2  &   2.1e-2  &   5.1e-3  &   1.2e-3 \\ \hline
			EoC(E) & -- & 3.1 &  0.007  & 2.1  & 2.0   \\ \hline
			$\eps_0$ & 2.0 & 1.3e-1 & 5.7e-2 & 1.3e-2 & 3.2e-3\\ \hline
			EoC($\eps_0$) & -- & 3.8 & 1.3 & 2.1 & 2.0 \\ \hline
		\end{tabular} &
        \small
		\begin{tabular}{|c|c|c|c|c|c|}
			\hline
			$N_x$ & 10 & 20 & 40 & 80 & 160 \\ \hline
			Error & 1.9e-2 & 2.6e-2 & 2.0e-2 & 1.9e-2 & 7.4e-3 \\ \hline
			EoC(E) &  -- & -0.4 & 0.3 & 0.09 & 1.4\\ \hline
			$\eps_0$ & 2.6e-1 & 3.6e-1 & 2.7e-1 & 1.5e-1 & 3.4e-2 \\ \hline
			EoC($\eps_0$) & -- & -0.5 & 0.4 & 0.9 & 2 \\ \hline
		\end{tabular}
	\end{tabular}
	\caption{
    Convergence analysis of the discrete hyperbolized Cahn-Hilliard equation \eqref{eq:CHHDIMEX} towards the discrete Cahn-Hilliard equation \eqref{eq:CH_ht} as a function of $\eps$.
    Top row: Convergence of $\cc$ to $c$, bottom row: convergence of $\pp$ to $D_{\diamond} c$.
    The parameters $k_2$ and $k_3$ are set to one. Time integration is performed using an ARS-222 scheme, with the number of time steps $N_T = N_x$. The order of accuracy is set to $p=1$ for both computations.
    The tables correspond to the figures in the top row; they show the baseline error (the error of the SBP operator for the Cahn-Hilliard equation) and then the interpolated $\eps_0$ that is needed to obtain this baseline error using the CHH approximation.
	EoC denotes experimental order of convergence, EoC(E) is the experimental order of convergence of the error, and EoC($\eps_0$) that of $\eps_0$.
	}\label{fig:dg_chh_err_p1}
\end{figure}

\begin{figure}
	\begin{tabular}{cc}
        \includegraphics[scale=0.7]{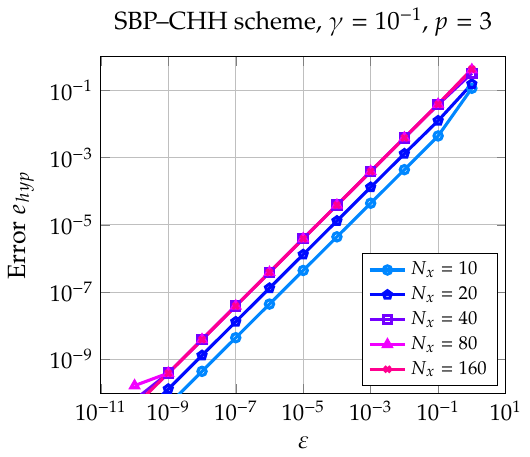} &
		\includegraphics[scale=0.7]{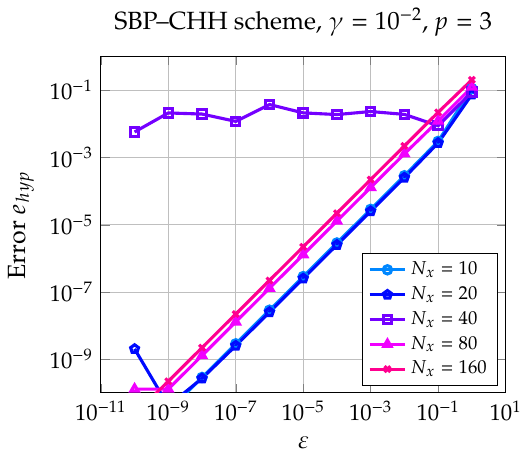} \\
        \includegraphics[scale=0.7]{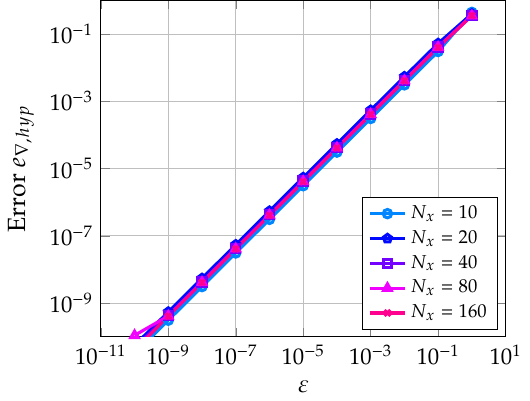} &
		\includegraphics[scale=0.7]{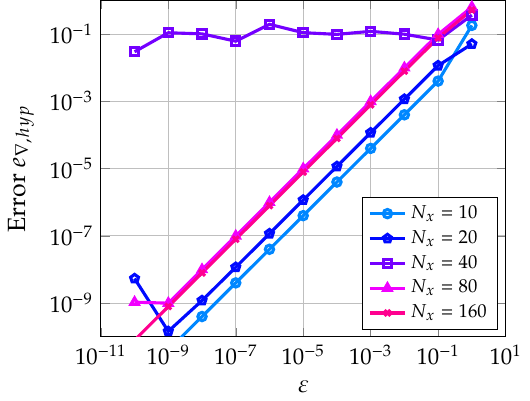} \\

        \small
		\begin{tabular}{|c|c|c|c|c|c|}
			\hline
			$N_x$ & 10 & 20 & 40 & 80 & 160 \\ \hline
			Error & 2.5e-1 & 4.4e-2 & 2.1e-2 & 1.6e-3 & 1.1e-4			 \\ \hline
			EoC(E) & -- & 2.5 & 1.0 & 3.8 & 3.9 \\ \hline
			$\eps_0$ & 2.1 & 3.0e-1 & 5.6e-2 & 4.0e-3 & 2.7e-4 \\ \hline
			EoC($\eps_0$) & -- & 2.8 & 2.4 & 3.8 & 3.9 \\ \hline
		\end{tabular} &
        \small
		\begin{tabular}{|c|c|c|c|c|c|}
			\hline
			$N_x$ & 10 & 20 & 40 & 80 & 160 \\ \hline
			Error & 1.4e-2 & 2.3e-2 & 2.1e-2 & 2.9e-2 & 5.1e-3  \\ \hline
			EoC(E) & -- & -0.8 & 0.1 & -0.4 & 2.5 \\ \hline
			$\eps_0$ & 1.9e-1 & 3.5e-1 & 2.3e-1 & 2.3e-1 & 2.3e-2 \\ \hline
			EoC($\eps_0$) & -- & -0.9 & 0.6 & --  & 3.3
			\\ \hline
		\end{tabular}
	\end{tabular}
	\caption{
    Convergence analysis of the discrete hyperbolized Cahn-Hilliard equation \eqref{eq:CHHDIMEX} towards the discrete Cahn-Hilliard equation \eqref{eq:CH_ht} as a function of $\eps$.
    Top row: Convergence of $\cc$ to $c$, bottom row: convergence of $\pp$ to $D_{\diamond} c$.
		The parameters $k_2$ and $k_3$ are set to one.
		Time integration is performed using an ARS-222 scheme, with the number of time steps $N_T = N_x$.
		The order of accuracy is set to $p=3$ for both computations.
		The tables correspond to the figures above; they show the baseline error (the error of the SBP operator for the Cahn-Hilliard equation) and then the interpolated $\eps_0$ that is needed to obtain this baseline error using the CHH approximation.
		EoC denotes experimental order of convergence, EoC(E) is the experimental order of convergence of the error, and EoC($\eps_0$) that of $\eps_0$.
	}\label{fig:dg_chh_err_p3}
\end{figure}

For the convergence analysis in Fig.~\ref{fig:dg_err_exact}, the definition of $D_{\circ}$ and $D_{\diamond}$ was irrelevant, as their product commutes. For the hyperbolization, however, it makes a difference. In the following, we define $D_{\circ} = D_+$ and $D_{\diamond} = D_-$.
In Figs.~\ref{fig:dg_chh_err_p1}--\ref{fig:dg_chh_err_p3}, we pick out $p= 1$ and $p= 3$, respectively, and approximate the numerical Cahn-Hilliard solution through the hyperbolized variant \eqref{eq:CHHDIMEX}.
The $\kappa$-parameters, see Thm.~\ref{thm:apimex}, are chosen as $\kappa_1 = \frac{\kappa_2}{\gamma} = \kappa_3 = \eps$, i.e., $k_2 = k_3 = 1$.
In all numerical experiments, unless otherwise stated, the initial conditions are set to be well-prepared. This means that, in line with Thm.~\ref{thm:apimex}, we set
\begin{align}
    \label{eq:discretewellpreparedics}
    \begin{split}
    \cc(\xx,t=0) &= \varphi(\xx,t=0) = c(\xx,t=0), \\
    \qq(\xx,t=0) = 0, \qquad w(\xx,t=0) &= 0, \qquad \pp(\xx,t=0) = D_{\diamond} \varphi(\xx,t=0).
    \end{split}
\end{align}

On the $x$-axis of Figs.~\ref{fig:dg_chh_err_p1}--\ref{fig:dg_chh_err_p3}, we report the values of $\eps$, while on the $y$-axis, we plot the error of the CHH equation with respect to the discretized Cahn-Hilliard equation on the same grid.
In the top row, we show the error of $\cc$ against $c$, while in the second row, we show the error of $\pp$ against $D_{\diamond} c \approx \partial_x c$.
As expected, the $\gamma = 10^{-1}$ case behaves more smoothly, which is due to the nature of the solution, which becomes discontinuous as $\gamma \rightarrow 0$, so the gradient is sharper for smaller $\gamma$.
This can be addressed by increasing the mesh resolution, which can also be seen from the figures.
Second, we see that for smaller $\eps$, the solutions converge towards each other.
What is very surprising is that the convergence results of both $\cc$ to $c$ and $\pp$ to $D_{\diamond} c$ look very similar.
This is a potential advantage for this approach, as $\pp$ could, e.g., be used for local postprocessing of the solution to increase the overall accuracy.
For $N_x = 40$ and $\gamma = 0.01$, in both cases $p=1$ and $p=3$, the convergence stalls a bit below the resolution error of the original method. We believe that this is due to the fact that the sharp gradient of the solution is resolved by too few points.
For $N_x = 20$, $\gamma = 0.01$ and very small $\eps$, we see some numerical instabilities that are related to numerical cancellation errors in terms such as $\frac{\cc - \varphi}{\eps}$.

Figs.~\ref{fig:dg_chh_err_p1}--\ref{fig:dg_chh_err_p3} show the error between the \emph{discretized} hyperbolized variant depending on $\eps$ and the \emph{discretized} Cahn-Hilliard equation.
For practitioners, the relevant error is between the discretized hyperbolized solution and the \emph{exact} Cahn-Hilliard solution.
Hence, in Figs.~\ref{fig:dg_chh_err_p1}--\ref{fig:dg_chh_err_p3} we also indicate which $\eps_0$ was necessary to produce CHH approximations that are as close to the discrete CH solution as the discrete CH solution is to the exact one.
This $\eps_0$ therefore balances the error contributions from discretization and hyperbolization; it has been determined through a linear interpolation based on the existing data points.
Experimental orders of convergence are reported for this $\eps_0$ and shown in comparison to the experimental orders of convergence of the SBP error.
It can be seen that these rates agree rather well, at least if there is sufficient resolution. This is not surprising, as the CHH solution converges with $\O(\eps)$ towards the CH solution.

\paragraph{Evolution of the energy}

We proved that both energies $\mathcal{E}_d$ and $\Ehd$ are decaying functions of time, see Lemma~\ref{la:energydecaych} and Lemma~\ref{la:energydecaychh}, at least if no time integration scheme has been applied.
In Fig.~\ref{fig:energy}, we plot the Cahn-Hilliard energy $\mathcal{E}_d$ and the hyperbolized energy $\Ehd$ for various values of $\eps$, as functions of time. The parameters were chosen in the same way as above, with $p = 3$ and $N_x = 80$.
It can be seen that all energies are decaying, as the theory predicts, even in this discretized setting. Furthermore, for $\eps \rightarrow 0$, it is obvious that the energies converge towards each other.
From $\eps \approx 10^{-2}$, the hyperbolized energy is virtually indistinguishable from the Cahn-Hilliard energy, again supporting the theory.

\begin{figure}
    \includegraphics{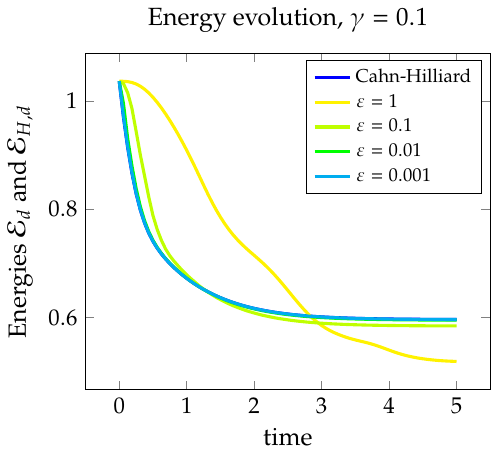}
	\includegraphics{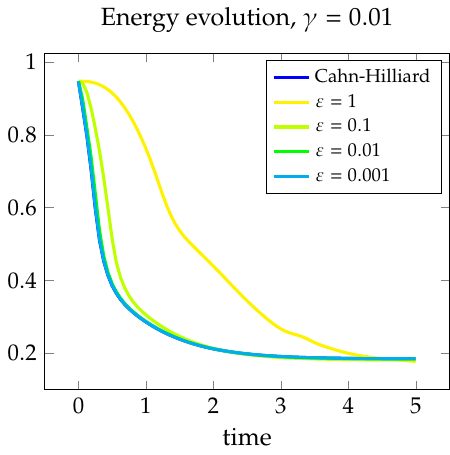}

	\caption{
		Evolution of the discrete energies $\E_{d}$ for the Cahn-Hilliard equation and $\Ehd$ for the hyperbolized variant for $\gamma = 0.1$ (left) and $\gamma = 0.01$ (right).  The parameters $k_2$ and $k_3$ are set to one. Time integration is performed using an ARS-222 scheme, with the number of time steps $N_T=N_x = 80$. The order of accuracy is $p=3$.
        }
	\label{fig:energy}
\end{figure}

\paragraph{Influence of $\kappa$}
Thm.~\ref{thm:aphyp} gives us some flexibility in the choice of the $\kappa$-parameters.
While $\kappa_1$ has to be fixed to $\eps$, $\kappa_2$ and $\kappa_3$ can be chosen as $\gamma \eps^{k_2}$ and $\eps^{k_3}$, respectively, for positive values of $k_2, k_3 \in \N$.
In Fig.~\ref{fig:compare_params}, we explore whether a different choice of the values $k_2$ and $k_3$ has an influence on the quality of the CHH approximation.
As an example, we show this analysis for $p=1$ and $N_x=80$, but the same conclusion is true for other values of $p$ and $N_x$.
In particular, the curves for other $p$ look very similar to the one for $p=1$, which demonstrates that the error here is mostly determined by the choice of $\eps$, and not so much by the discretization parameters.
It can hence be seen that changing the parameters $\kappa$ has hardly any influence on the solution quality for this test case; the convergence curves are pretty much identical.
This is also true (not shown here) for convergence results of $\pp$ against $D_{\diamond}c$.
We will hence continue to use $k_2 = k_3 = 1$ for this test case.

\begin{figure}
    \centering
	\includegraphics[height=0.4\textwidth]{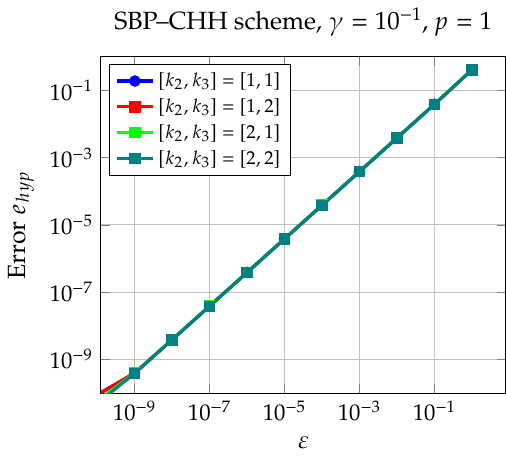}
	\includegraphics[height=0.4\textwidth]{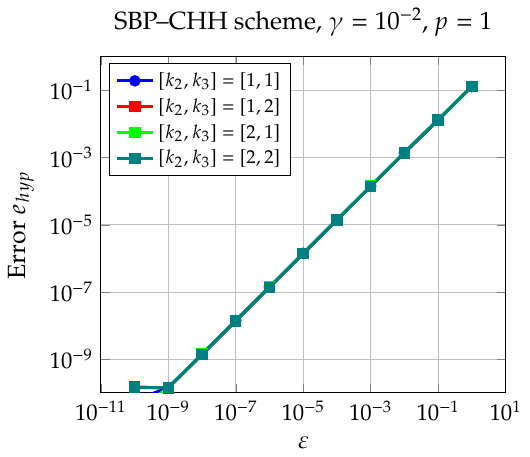}

	\caption{
    Convergence analysis of the discrete hyperbolized Cahn-Hilliard equation \eqref{eq:CHHDIMEX} towards the discrete Cahn-Hilliard equation \eqref{eq:CH_ht} as a function of $\eps$ for varying parameters $k_2$ and $k_3$.
		Time integration is performed using an ARS-222 scheme, with the number of time steps $N_T = N_x$. $N_x$ is set to 80; the order of accuracy is set to $p=1$ for both computations.
	}\label{fig:compare_params}
\end{figure}

\paragraph{Influence of well-prepared initial conditions}
As outlined in Thm.~\ref{thm:apimex}, the initial conditions need to be well-prepared, i.e., we set initial conditions as in Eq.~\eqref{eq:discretewellpreparedics}.
In this paragraph, we investigate what happens for initial conditions that are not well prepared, i.e., where we set all the hyperbolic initial conditions, except the one for $\cc$, to zero. The results of this investigation are shown in Fig.~\ref{fig:compare_wellprepared} for various values of $N_x$ and orders of accuracy of one and three, respectively.
At least for moderate values of $\eps$, the results do not differ significantly.
For small $\eps \ll 1$, however, the algorithm based on the ill-prepared initial data is significantly worse, to the extent that it diverges.
We expect that this is related to a loss of numerical accuracy. Using ill-prepared initial data means that terms such as $\frac{\cc - \varphi}{\kappa_1}$ in \eqref{eq:CHHDE3} can become extremely large.
This underlines the significance of using well-prepared conditions.
Please note that well-prepared initial conditions do not necessitate the solution of an additional problem; they can be easily computed through matrix-vector multiplication of the initial conditions.

\begin{figure}
	\centering
    \includegraphics{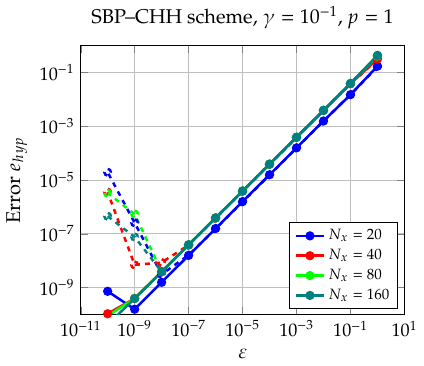}
	\includegraphics{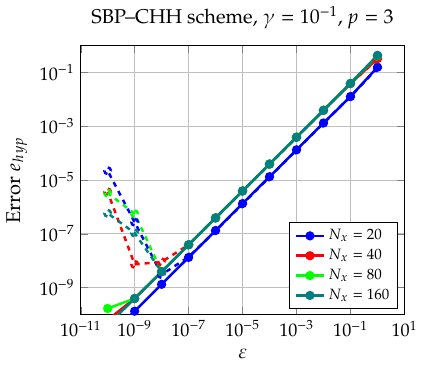}
	\includegraphics{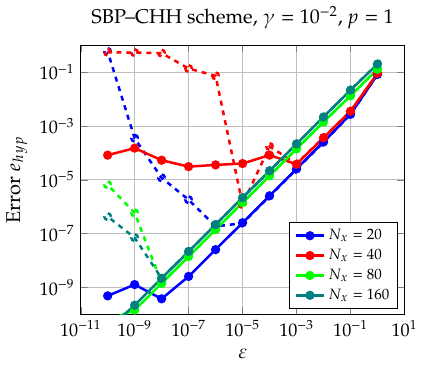}
	\includegraphics{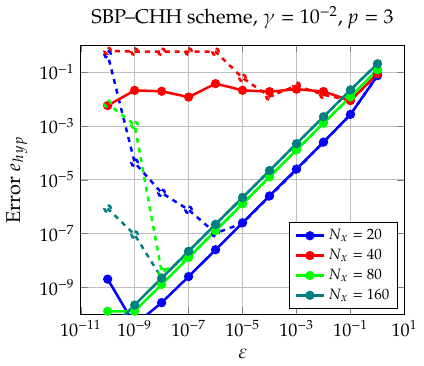}

	\caption{
    Convergence analysis of the discrete hyperbolized Cahn-Hilliard equation \eqref{eq:CHHDIMEX} towards the discrete Cahn-Hilliard equation \eqref{eq:CH_ht} as a function of $\eps$ under well-prepared and ill-prepared initial conditions. Dashed lines denote ill-prepared conditions. Time integration is performed using an ARS-222 scheme, with the number of time steps $N_T = N_x$. The order of accuracy is set to $p = 1$ (left) and $p=3$ (right).
	}\label{fig:compare_wellprepared}
\end{figure}

\FloatBarrier

\subsection{DG scheme}
In this section, we use the nodal LDG scheme as explained in \cite[Sec.~2.5]{RanochaConservative2021} for our investigations.
In contrast to the (periodic) finite-difference schemes, $D_+$ and $D_-$ do not commute for DG (except for $p = 1$, which reduces to a finite-difference scheme) and hence, it is interesting to analyze whether the definition of $D_{2}$ as $D_+ D_-$ or $D_- D_+$ makes a significant difference.
All the results in this section are computed with the same initial conditions as in \eqref{eq:chh_initial_cos} to allow for a direct comparison.
Also, the error is computed in the same way as in Sec.~\ref{sec:onedsbp}, of course with the mass matrix $M$ corresponding to the DG scheme. The nonlinear systems of algebraic equations are solved with a damped Newton method, also with parameters as given in Sec.~\ref{sec:onedsbp}.

Numerical results for the convergence of the scheme \eqref{eq:CH_h}, i.e., the direct discretization of the Cahn-Hilliard equation, are shown in Fig.~\ref{fig:dmdp}.
Solid lines correspond to $D_{2} = D_+D_-$ and dashed lines to $D_{2}=D_-D_+$. A difference is hardly visible.
While we checked that the results are really distinct, so we do not reproduce exactly the same results, the difference is so negligible that it cannot be seen on this plot.
We can hence conclude---and we see this in other numerical examples not shown here as well---that in practice, the definition of $D_{2}$ hardly plays a role, as long as it is a combination of $D_+$ and $D_-$ as outlined in the theory section.

\begin{figure}[t!]
    \centering
	\includegraphics[width=0.4\textwidth]{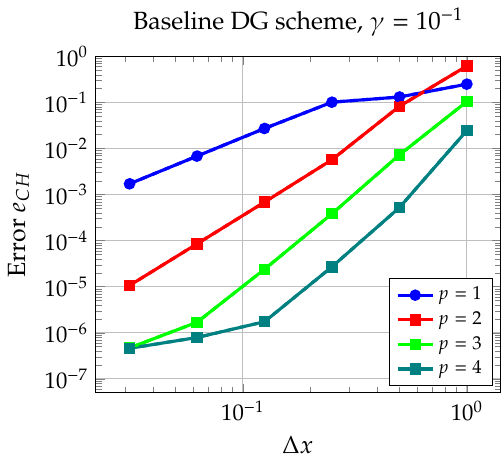}
	\includegraphics[width=0.4\textwidth]{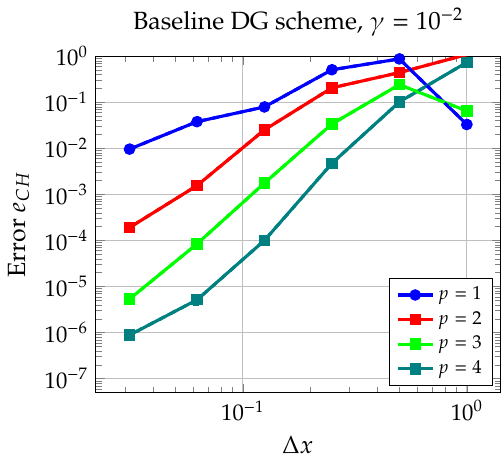}
	\caption{Convergence results of the original DG-SBP scheme using $D_{2} = D_+D_-$ (solid lines) and $D_{2}=D_-D_+$ (dashed lines) for various polynomial degrees and for $\gamma = 0.1$ (left) and $\gamma = 0.01$ (right).
	Time integration is performed using the ARS-222 scheme with $N_T = N_x$, and well-prepared initial conditions are used.
	The difference between solid and dashed lines is hardly visible.
	Results are not exactly the same, but they differ only very slightly.}\label{fig:dmdp}
\end{figure}

In Fig.~\ref{fig:dg_chh_err_p2}, we show the convergence results of the hyperbolized variant \eqref{eq:CHHDIMEX} against the baseline scheme \eqref{eq:CH_h}, as an example for polynomial degree $p=2$ for various values of $N_x$ as a function of $\eps$.
The convergence is much smoother than for the SBP finite-difference scheme, which is most likely due to the improved resolution of the DG scheme. We do not see any stalls of the convergence in $\eps$. As predicted by theory, the convergence is linear in $\eps$, i.e., for all values considered, we can see an experimental order of convergence of $\O(\eps)$.

\begin{figure}
        \centering
        \includegraphics[height=0.4\textwidth]{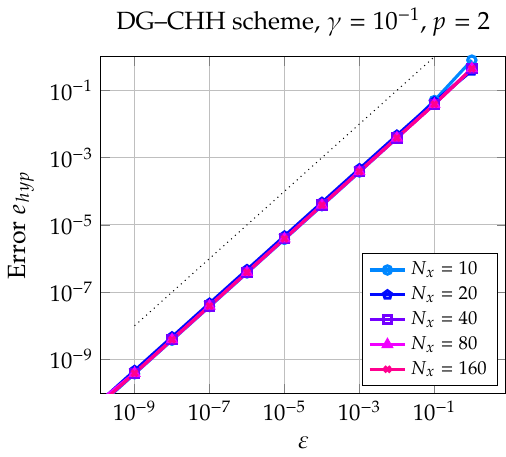}
		\includegraphics[height=0.4\textwidth]{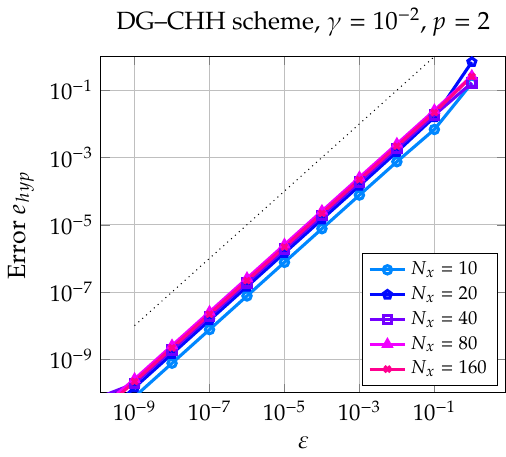}

	\caption{
    Convergence analysis of the discrete hyperbolized Cahn-Hilliard equation \eqref{eq:CHHDIMEX} with DG discretization towards the discrete Cahn-Hilliard equation \eqref{eq:CH_ht} as a function of $\eps$.
	The parameters $k_2$ and $k_3$ are set to one.
		Time integration is performed using an ARS-222 scheme, with the number of time steps $N_T = N_x$. The order of accuracy is set to $p=2$ for both computations. Dotted lines indicate first-order convergence in $\eps$.
	}\label{fig:dg_chh_err_p2}
\end{figure}

The framework of \cite{RanochaConservative2021} can also be used to formulate the BR1-DG scheme \cite{bassi1997high} as an SBP operator.
We have performed the same numerical tests for the BR1 scheme (not shown here).
While the convergence of the BR1 scheme for the Cahn-Hilliard discretization is worse than for LDG---as expected---we hardly see any difference in the convergence of the hyperbolized BR1 variant to the BR1 scheme.
The framework proposed here can hence also be used in a broader context.

\FloatBarrier

% !TeX root = main.tex

\subsection{2D results}\label{subsec:2DRes}

\begin{figure}
	\centering
	\includegraphics[width=0.18\textwidth,height=0.18\textwidth]{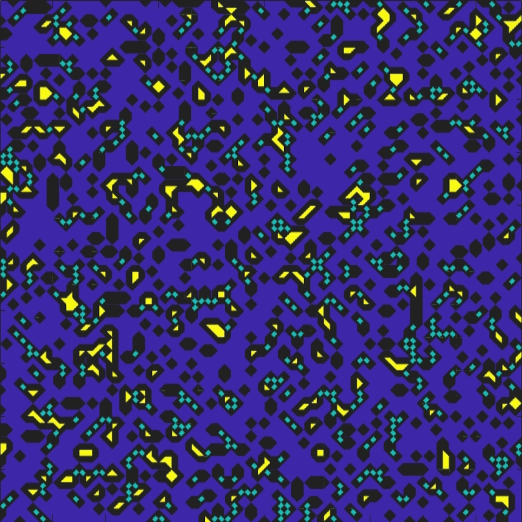}
	\includegraphics[width=0.18\textwidth,height=0.18\textwidth]{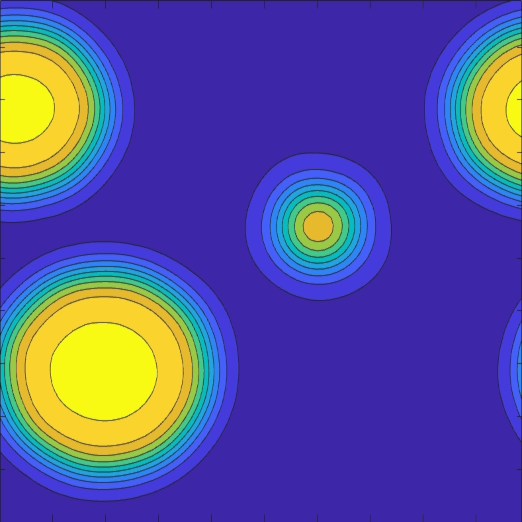}
	\includegraphics[width=0.18\textwidth,height=0.18\textwidth]{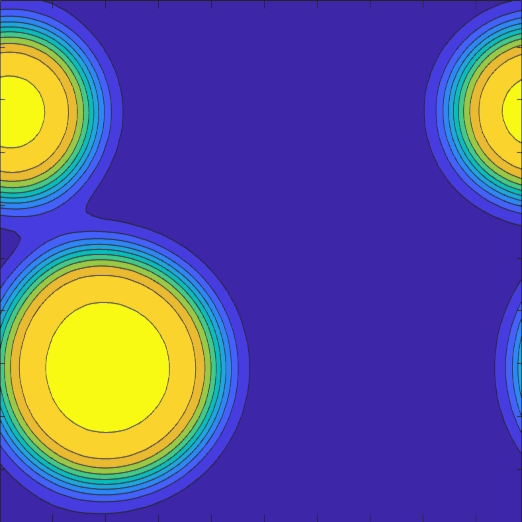}
	\includegraphics[width=0.18\textwidth,height=0.18\textwidth]{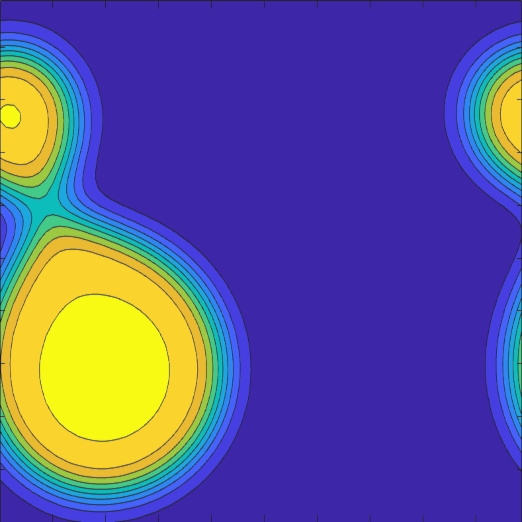}
	\includegraphics[width=0.18\textwidth,height=0.18\textwidth]{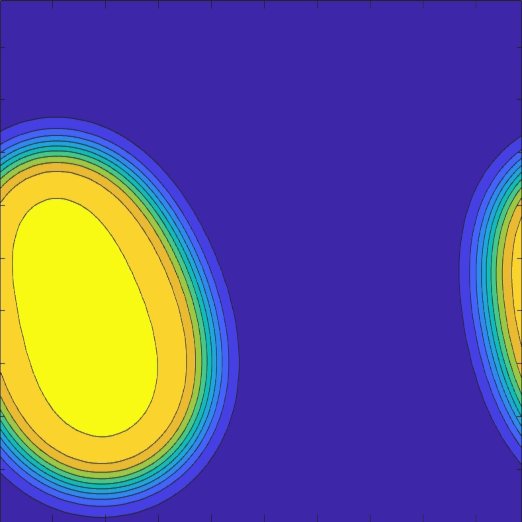} \\[0.5em]
	%%%
	\includegraphics[width=0.18\textwidth,height=0.18\textwidth]{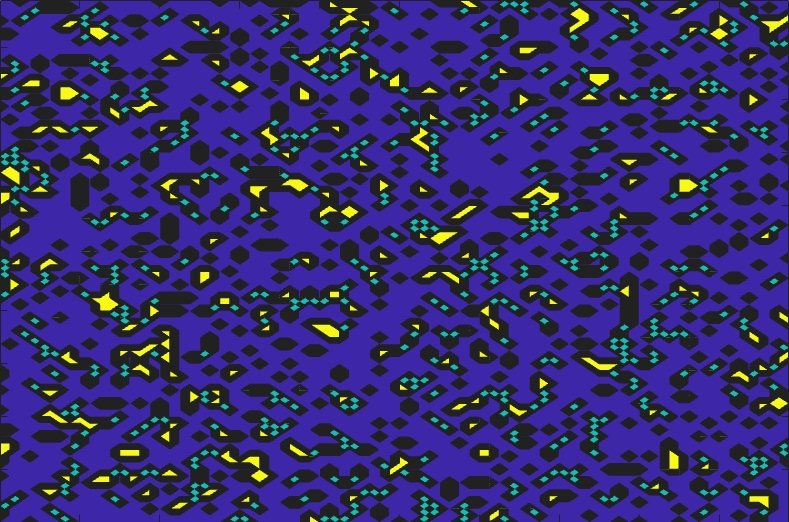}
	\includegraphics[width=0.18\textwidth,height=0.18\textwidth]{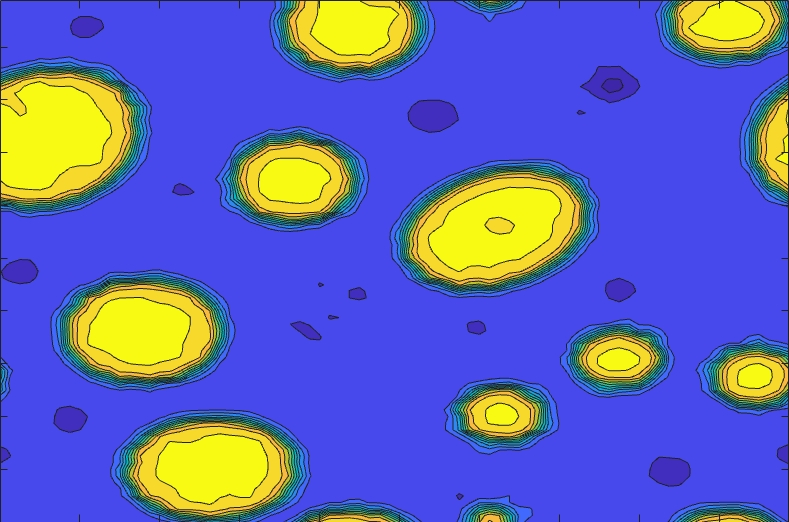}
	\includegraphics[width=0.18\textwidth,height=0.18\textwidth]{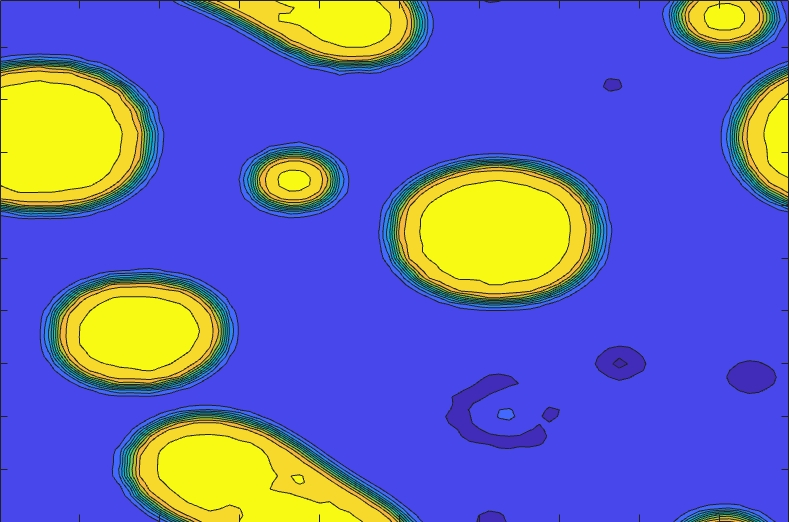}
	\includegraphics[width=0.18\textwidth,height=0.18\textwidth]{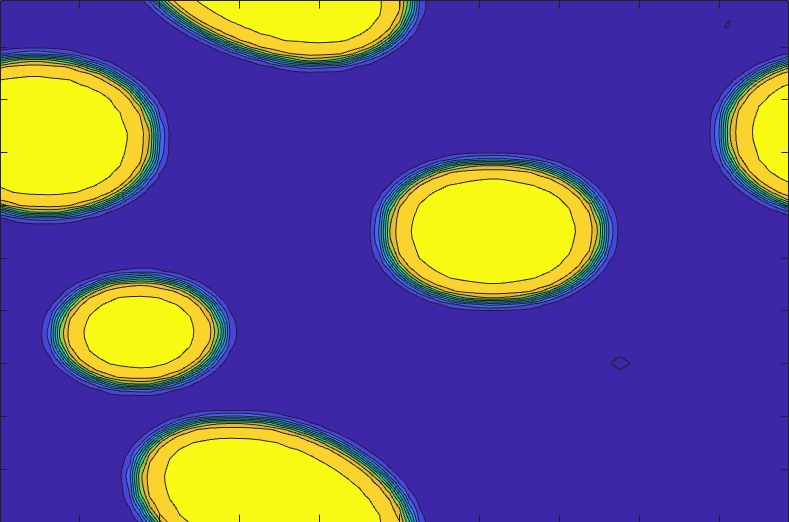}
	\includegraphics[width=0.18\textwidth,height=0.18\textwidth]{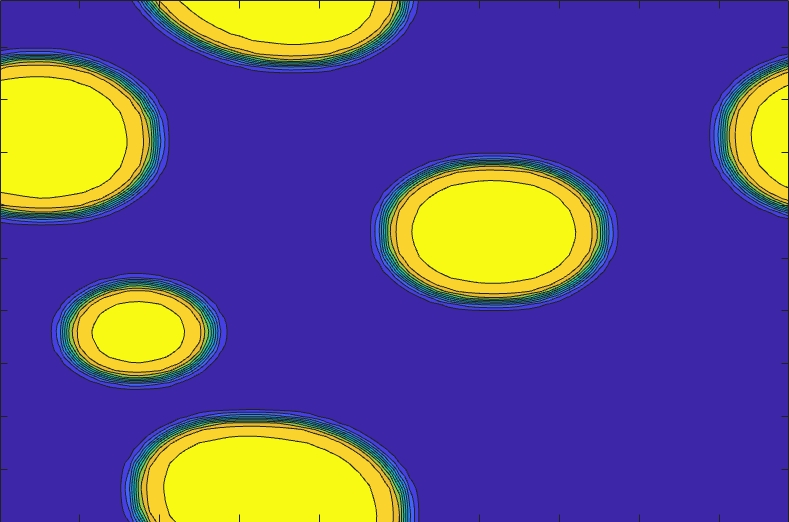}
	\caption{Two-dimensional results of the Cahn-Hilliard equation on the unit rectangle for $\gamma = 10^{-3}$ (top) and $\gamma = 10^{-4}$ (bottom) at times $t \in \left\{0, \frac 1 4, \frac 1 2, \frac 3 4, 1 \right\}$.
		The spatial resolution is $N_x = N_y = 80$, the computation is performed with $p=1$ and the ARS-222 time integration scheme.}
	\label{fig:CH2D}
\end{figure}

In this section, we consider the Cahn-Hilliard equation on a two-dimensional domain.
More precisely, we choose $\Omega = [0, 1]^2$. Again, periodic boundary conditions are used.
As the initial condition, we choose a random field consisting of approximately 75\% of the entries equal to -1, and 25\% equal to +1. More precisely,
\begin{align*}
	c(x,0) = \sign(4 \rand - 3),
\end{align*}
where $\rand$ generates uniformly distributed values\footnote{In MATLAB, we implement $\rand$ through the command ``rng(4711), rand($N_x$*$N_y$,1)'', with $N_x$ the spatial resolution in the $x$-direction and $N_y$ the spatial resolution in the $y$-direction.} in $[0, 1]$.
This produces initial conditions as shown in Fig.~\ref{fig:CH2D}, leftmost pictures.
The dynamics of the Cahn-Hilliard equation will merge these randomly generated droplets into larger circles that will eventually collapse into one large circle.
The solutions for $\gamma = 10^{-3}$ and $\gamma = 10^{-4}$ are shown in Fig.~\ref{fig:CH2D} for times $t \in \left\{0, \frac 1 4, \frac 1 2, \frac 3 4, 1 \right\}$. In this section, we will always work with $\Te = 1$.
As in the previous section, the error is computed at $\Te$, with the same definition as for the one-dimensional case.
Also in this section, for the resulting nonlinear systems of equations, we use a damped Newton procedure with a maximum of 20 steps, and a reduced absolute and relative tolerance of $10^{-8}$.

The numerical discretization in two dimensions is straightforward; we use the SBP finite-difference schemes.
We work with a tensor-product grid consisting of $N_x$ degrees of freedom in the $x$-direction and $N_y$ degrees of freedom in the $y$-direction.
Due to this tensor-product structure, we can then define $x$-derivatives as
\begin{align*}
	D_{\circ,x} := \Id_{N_y} \otimes D_{\circ},
\end{align*}
with $D_{\circ}$ the one-dimensional derivative operator $D_+$ or $D_-$ on a grid with $N_x$ degrees of freedom, and $\Id_{N_y}$ the $(N_y \times N_y)$ identity matrix.
Similarly, the $y$-derivatives are defined as
\begin{align*}
	D_{\circ,y} := D_{\circ} \otimes \Id_{N_x}.
\end{align*}
One can show that these operators are also SBP operators.
If we then define the approximation of the gradient through $D_{\circ,\nabla} := \begin{pmatrix} D_{\circ, x} \\ D_{\circ, y} \end{pmatrix}$, we can use exactly the same scheme as in \eqref{eq:CHHDIMEX}.
In particular, the combination of $+$ and $-$ operators is the same as in the one-dimensional case.
Moreover, energy-stability can be shown in exactly the same fashion.
In this section, we first work with the finite-difference SBP operators.

Based on the analytical results in Thm.~\ref{thm:error_estimate}, we decided to scale $\kappa_2 = \gamma \eps^{k_2}$.
A more naive choice would obviously be not to include $\gamma$ in $\kappa_2$---all proofs would go through with this choice as well.
To observe the influence of this choice, in Fig.~\ref{fig:CHH2DConvg}, we show convergence results of the hyperbolized equation to the Cahn-Hilliard equation on a grid with $N_x = N_y = 80$, corresponding to the solutions shown in Fig.~\ref{fig:CH2D}, for various values of $\eps$, $\gamma$ and $p$; the results have all been computed using well-prepared initial conditions.
We distinguish between the naive choice of $\kappa_2$ without $\gamma$ (dashed lines) and the choice of $\kappa_2 = \gamma \eps^{k_2}$ as suggested in this work (solid lines).
Furthermore, we test $[k_2,k_3] = [1,1]$ (blue) versus $[k_2,k_3] = [2,1]$ (red). Note that the latter variant leads to an equilibration (in the orders of $\eps$) of the wave speeds in \eqref{eq:wavespeeds}.

First, it can be seen that in each case, for $\eps \rightarrow 0$, the solutions converge to each other.
For the naive choice of $\kappa_2$, the choice of the parameter $k_2$ matters significantly; the equilibrated variant with $k_2 = 2$ behaves significantly better, where the effect is more pronounced for smaller values of $\gamma$.

For the $\gamma$-scaled choice of $\kappa_2$, we hardly see an influence of the value of $k_2$.
In particular, convergence ``starts'' at significantly lower values of $\eps$ compared to the choice $\kappa_2 = \eps$.
These results are nicely in line with the results of Thm.~\ref{thm:error_estimate}, as Eq.~\eqref{eq:the error estimate} suggests---upon ignoring that the constants might depend on $\gamma$ as well---that the error scales as $\kappa_1 + \frac{\kappa_2} \gamma + \kappa_3$.
Thus, an optimal scaling is reached if all terms behave in the same way, which means that $\eps = \kappa_1 = \frac{\kappa_2}{\gamma} = \kappa_3$ should hold. If one chooses one of these contributions smaller than the others, this will hardly lead to an improvement, as the error is then simply dominated by the other two components.
We did not plot the error of $\pp$ against the discrete gradient of $\cc$, as the curves again look very similar to those for $e_{hyp}$, which is in very good agreement with the one-dimensional results.

\begin{figure}
    \centering
	\includegraphics[height=0.33\textwidth]{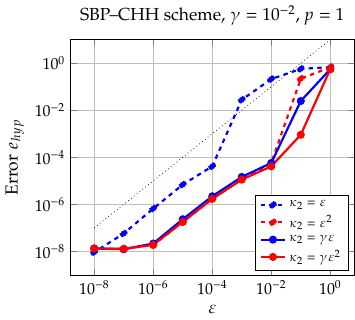}
    \includegraphics[height=0.33\textwidth]{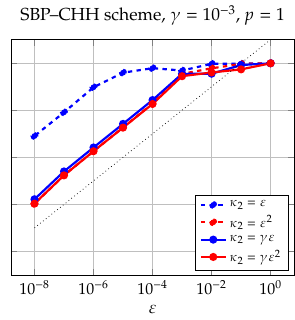}
    \includegraphics[height=0.33\textwidth]{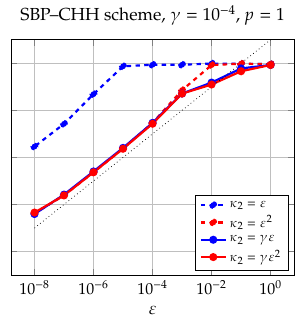} \\
    \includegraphics[height=0.33\textwidth]{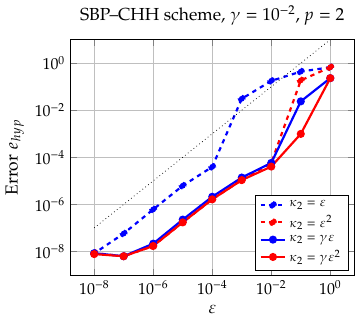}
    \includegraphics[height=0.33\textwidth]{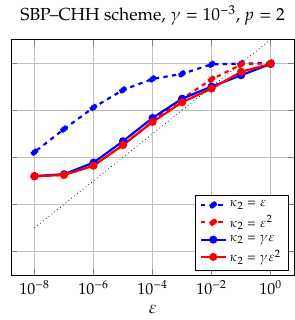}
    \includegraphics[height=0.33\textwidth]{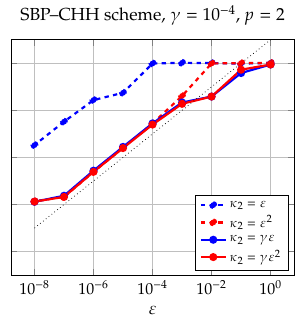}
	\caption{
    Convergence analysis of the discrete hyperbolized two-dimensional Cahn-Hilliard equation towards the discrete two-dimensional Cahn-Hilliard equation as a function of $\eps$.
	Time integration is performed using an ARS-222 scheme, with $N_T = N_x$ time steps.
	We choose the spatial resolution to be $N_x = N_y = 80$. $\gamma$ varies from $10^{-2}$ to $10^{-4}$.
	Solid lines correspond to the usual scaling of $\kappa_2 = \gamma \eps^{k_2}$. Dashed lines correspond to the naive scaling $\kappa_2 = \eps^{k_2}$. Blue denotes $[k_2, k_3] = [1,1]$, while red denotes the choice $[k_2, k_3] = [2,1]$.
	The dotted black line indicates first order of convergence in $\eps$.
	}\label{fig:CHH2DConvg}
\end{figure}

In Fig.~\ref{fig:2Denergy}, we show the evolution of the discrete energies $\E_{d}$ and $\Ehd$, respectively, for various values of $\eps$.
We choose the order of accuracy $p= 1$, and a resolution of $N_x = N_y = 80$ (other choices of parameters yield similar results).
The parameter $\gamma$ is set to $10^{-3}$. Again, we compare $[k_2,k_3]=[1,1]$ to $[k_2,k_3] = [2,1]$ (dashed).
The initial conditions are highly irregular, which imposes a very high energy at time $t = 0$.
In fact, the energy looks nearly discontinuous for small $t$, which is why we zoom in on the energy in the right part of the plot. We can repeat the conclusion from the one-dimensional case that all energies are decaying, and that for $\eps \rightarrow 0$, they converge to each other.
Again, we see hardly any influence; only for large values of $\eps$, the $k_2 = 2$ case behaves slightly better (obviously not for $\eps = 1$, where both values yield exactly the same results, as $1 = 1^{k_2}$).

\begin{figure}
    \centering
	\includegraphics[width=0.4\textwidth]{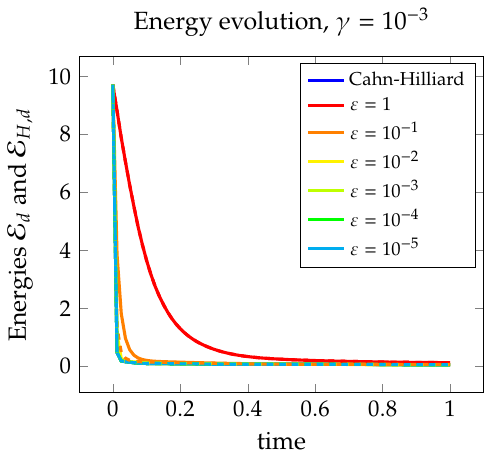}
	\includegraphics[width=0.4\textwidth]{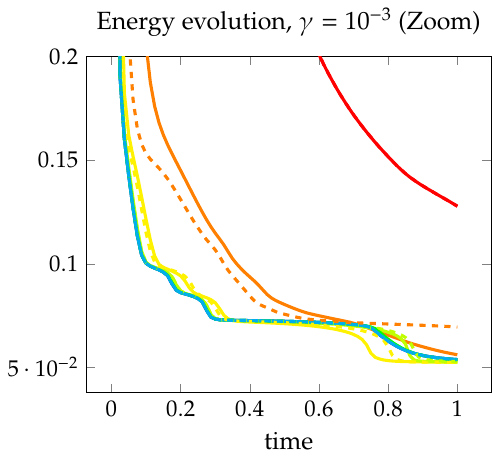}
	\caption{
		Evolution of the discrete energies $\E_{d}$ for the two-dimensional Cahn-Hilliard equation and $\Ehd$ for the hyperbolized variant for $\gamma = 10^{-3}$.
			Time integration is performed using an ARS-222 scheme, with the number of time steps $N_T=N_x = N_y = 80$. The order of accuracy is $p=1$.
        Solid lines correspond to the choice of the parameters $[k_2, k_3] = [1,1]$, while dashed lines correspond to $[k_2, k_3] = [2, 1]$.
        Note that the initial condition is so irregular that it imposes a very high energy in the beginning.
		For small $\eps$, the energy of the hyperbolized version is indistinguishable from the original Cahn-Hilliard energy in this plot.
        }
	\label{fig:2Denergy}
\end{figure}

\paragraph{Influence of $k_2$ and $k_3$}

As in the one-dimensional section, we aim to investigate the influence of the parameters $\kappa$ from Thm.~\ref{thm:aphyp}.
This analysis is done using an SBP finite-difference scheme with $N_x = N_y = 60$ and order of accuracy $p=2$, and an LDG scheme with $N_x = N_y = 15$ and $p=3$.
Note that $\ndof$ is the same for both computations. The Cahn-Hilliard parameter is set to $\gamma = 10^{-3}$. The remaining parameters are chosen as before.
The convergence results can be found in Fig.~\ref{fig:2DDGKappas}.
As expected, it can be seen that the choice of $\kappa$ has a rather small influence on the solution quality.
Among the parameters investigated here, $[k_2,k_3]=[2,2]$ and $[k_2,k_3] = [1,2]$ behave the best (and very similar), better than $[k_2,k_3]=[1,1]$ and $[k_2,k_3]=[2,1]$, which again behave nearly the same way.
However, all choices converge, and the difference is not extremely pronounced.
Please also note that the algebraic systems of equations to be solved get stiffer with larger $k_2, k_3$ and smaller $\eps$.

\begin{figure}
    \centering
    \includegraphics[width=0.4\textwidth]{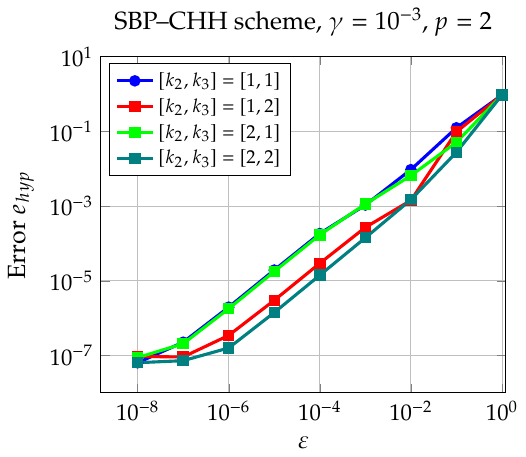}
    \includegraphics[width=0.4\textwidth]{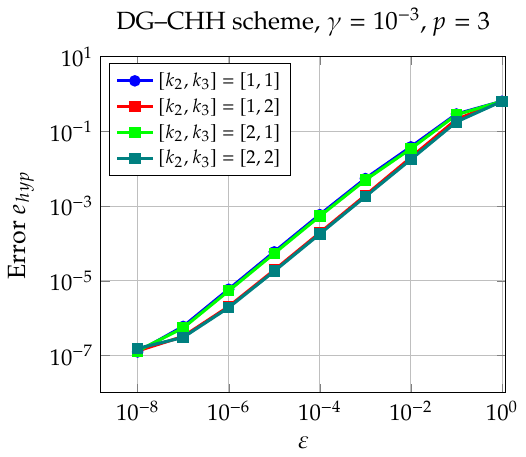}
    \caption{Convergence analysis of the discrete hyperbolized Cahn-Hilliard equation \eqref{eq:CHHDIMEX} towards the discrete Cahn-Hilliard equation \eqref{eq:CH_ht} as a function of $\eps$ for varying parameters $k_2$ and $k_3$.
	Time integration is performed using an ARS-222 scheme, with the number of time steps $N_T = N_x$. $N_x$ and $N_y$ are set to 60 (left) and 15 (right).
	The left plot uses an SBP scheme with order of accuracy 2, the right plot an LDG scheme with polynomial degree $p= 3$. Note that in both cases, $\ndof$ is identical.}
    \label{fig:2DDGKappas}
\end{figure}

\section{Summary and conclusions}
\label{sec:conout}
In this work, we analytically investigated a hyperbolization of the Cahn-Hilliard equation proposed in \cite{dhaouadi2025first}, proved a~priori error estimates, developed provably energy-stable discretization schemes for both the Cahn-Hilliard equation and its hyperbolized variant, and showed numerical results.
In particular, we have investigated how to choose the parameters of the hyperbolization optimally.

Future work will concentrate on several further research questions. First, there exists a structurally different quasi-hyperbolization of the Cahn-Hilliard equation, given in \cite{keim2024note}, which has fewer unknowns but is not fully hyperbolic.
A comparison of both hyperbolizations in terms of a~priori error estimates, convergence, etc., is of key interest to evaluate the area of application of both variants.
Furthermore, it might be of interest to revisit the splitting of \cite{dhaouadi2025first} in the light of these results, to see whether one can reduce the number of unknowns.

We purposely did not report the timing results of our algorithms in this work.
The hyperbolized variant is typically much slower, which has a multitude of reasons.
First, the hyperbolized equation consists of significantly more unknowns; second, the nonlinear algebraic systems of equations become extremely stiff for $\eps \rightarrow 0$.
While the first issue is related to the hyperbolization itself, and can only be tackled by reconsidering the splitting, the second issue needs to be treated by carefully devising algebraic solvers capable of handling the stiffness.
In particular, a well-chosen combination of preconditioners and Krylov subspace methods needs to be developed and analyzed.

As a further area of research, we consider the application-relevant case where the Cahn-Hilliard equation is coupled to other equations, such as the Navier-Stokes equations \cite{AbelsGarckeGruen2012}.
In this context, hyperbolization seems an even more natural choice, in particular, when combined with the compressible Navier-Stokes equations, which are largely dominated by their hyperbolic parts and where there is a long tradition of using hyperbolic solvers.

\appendix

\section*{Acknowledgments}

JG was supported by the German Research Foundation (DFG) within the projects No.~525866748 and No.~525877563 of the Priority Program SPP 2410 Hyperbolic Balance Laws in Fluid Mechanics: Complexity, Scales, Randomness (CoScaRa).
HR was supported by the Deutsche Forschungsgemeinschaft
(DFG, German Research Foundation, project numbers 513301895 and 528753982
as well as within the DFG priority program SPP~2410 with project number 526031774).
JS was supported by BOF funding from UHasselt.

\printbibliography

\clearpage

\end{document}